\documentclass{amsart}
\usepackage{marktext}
\usepackage{gimac}

\newcommand{\GI}[2][]{\sidenote[colback=yellow!20]{\textbf{GI\xspace #1:} #2}}

\newcommand{\sfrac}[2]{#1/#2}

\newcommand{\Enu}{\mathcal E_\nu}
\newdelim{\ip}{\langle}{\rangle}

\newcommand{\SL}{\mathit{SL}}
\newcommand{\GL}{\mathit{GL}}

\makeatletter
\newcommand{\leqnomode}{\tagsleft@true\let\veqno\@@leqno}
\newcommand{\reqnomode}{\tagsleft@false\let\veqno\@@eqno}
\newcommand{\mylabel}[2]{\def\@currentlabel{#2}\label{#1}}
\makeatother

\DeclareMathOperator{\vol}{vol}

\theoremstyle{remark}
\newtheorem{question}[theorem]{Question}

\begin{document}
% Title junk
\title{Dissipation Enhancement by Mixing}
\author[Feng]{Yuanyuan Feng}
\address{%
  Department of Mathematical Sciences,
  Carnegie Mellon University,
  Pittsburgh, PA 15213.}
\email{yuanyuaf@andrew.cmu.edu}
\author[Iyer]{Gautam Iyer}
\address{%
  Department of Mathematical Sciences,
  Carnegie Mellon University,
  Pittsburgh, PA 15213.}
\email{gautam@math.cmu.edu}
\thanks{%
  This material is based upon work partially supported by
  the National Science Foundation under grants
  DMS-1252912, DMS-1814147
  and the Center for Nonlinear Analysis.
}
\keywords{Enhanced dissipation, mixing.}
\subjclass[2010]{Primary
  %35K08; %Heat kernel
  76F25; %Turbulent transport, mixing
  Secondary
  37A25, %Ergodicity, mixing, rates of mixing
  76R50. %Diffusion and convection / Diffusion
}

\begin{abstract}
  We quantitatively study the interaction between diffusion and mixing in both the continuous, and discrete time setting.
  In discrete time, we consider a mixing dynamical system interposed with diffusion.
  In continuous time, we consider the advection diffusion equation where the advecting vector field is assumed to be sufficiently mixing.
  The main results of this paper is to estimate the dissipation time and energy decay based on an assumption quantifying the mixing rate.
\end{abstract}
\maketitle

\section{Introduction.}\label{s:intro}

Diffusion and mixing are two fundamental phenomena that arise in a wide variety of applications ranging from micro-fluids to meteorology, and even cosmology.
In incompressible fluids, stirring induces mixing by filamentation and facilitates the formation of small scales.
Diffusion, on the other hand, efficiently damps small scales and the balance between these two phenomena is the main subject of our investigation.
Specifically, our aim in this paper is to quantify the interaction between diffusion and mixing in a manner that often arises in the context of fluids~\cites{DoeringThiffeault06,ConstantinKiselevEA08,LinThiffeaultEA11,Thiffeault12}.

In the absence of diffusion, the mixing of tracer particles passively advected by an incompressible flow has been extensively studied.
Several authors~\cites{MathewMezicEA05,LinThiffeaultEA11,Thiffeault12} measured mixing using \emph{multi-scale} norms and studied how efficiently incompressible flows can mix (see for instance~\cites{Bressan03,LunasinLinEA12,IyerKiselevEA14,AlbertiCrippaEA16,YaoZlatos17} and references therein).
In this scenario, however, there is no apriori limit to the resolution attainable via mixing.

In contrast, in the presence of diffusion, the effects of mixing may be enhanced, balanced, or even counteracted by diffusion (see for instance~\cites{FannjiangPapanicolaou94,ThiffeaultChildress03,FannjiangNonnenmacherEA04,ConstantinKiselevEA08,IyerNovikovEA10,KiselevXu15,MarcotteDoeringEA18,MilesDoering18}).
In this paper we quantify this interaction by studying the energy dissipation rate.
Roughly speaking, our main results can be stated as follows:
\begin{enumerate}
  \item
    In the continuous time setting we show (Theorem~\ref{t:disStrongCts}) that if the flow is strongly mixing, then the \emph{dissipation time} (i.e.\ the time required for the system to dissipate a constant fraction of its initial energy) can be bounded explicitly in terms of the mixing rate.
    In particular, for exponentially mixing flows, then the dissipation time is bounded by~$C\nu^{-\delta}$, where $\nu$ is the strength of the diffusion, and $\delta \in (0,1)$ is an explicit constant.
     If instead the flow is weakly mixing at a polynomial rate, then the dissipation time is bounded by~$C/(\nu|\ln \nu|^\delta)$ for some explicit $\delta >0$ (Theorem~\ref{t:disWeakCts}).
    %Here $\nu$ is the diffusivity, and we recall that the dissipation time (see Definition~\ref{d:dissipationTime}, below) is the time required for the system to dissipate a constant fraction of its initial energy.

  \item 
    Under similar assumptions in the discrete time setting we obtain stronger bounds on the dissipation time (Theorems~\ref{t:disStrong} and~\ref{t:disWeak}).
    In particular, we show that the dissipation time of a pulsed diffusion with a map that is exponentially mixing is at most at most~$C\abs{\ln \nu}^2$.
    If the map is mixing at a polynomial rate, we show that the dissipation time is bounded by $C / \nu^\delta$ for some explicit $\delta \in (0, 1)$.
    %If instead the map is weakly mixing, at a polynomial rate, the mixer is weakly mixing instead, then the dissipation time is bounded by~$C\nu^{-\delta}$ for some explicit $\delta \in (0,1)$ when the rate function is power law.
    
    %and show that the dissipation time is now at most~$C \abs{\ln \nu}^2$ if the mixer is strongly mixing with an \emph{exponentially decaying} rate function, the dissipation time is at most~$C\nu^{-\delta}$ for some exlicit~$\delta\in (0,1)$ if the mixer is strongly mixing with a power-law rate function.

  \item
    In the discrete time setting we also show (Theorem~\ref{t:energydecay}) that the energy can not decay faster than double exponentially in time.
    Moreover,
    %using elementary Galois theory,
    we obtain a family of examples where the energy indeed decays double exponentially in time.
    (In the continuous time setting the double exponential lower bound is known~\cite{Poon96}, however, to the best of our knowledge there are no smooth flows which are known to attain this lower bound.)

  \item
    In bounded domains, Berestycki et.\ al.~\cite{BerestyckiHamelEA05} studied asymptotics of the principal eigenvalue of the operator $-\nu \lap + u \cdot \grad$ as $\nu \to 0$.
    %They characterize flows~$u$ for which the principal eigenvalue converges to~$0$ \emph{slower} than $\nu$.
    %Their results, however, did not provide any information on the rate of convergence.
    We show (Proposition~\ref{p:eval}) that one can use the dissipation time to obtain quantitative bounds on the rate at which the principal eigenvalue approaches~$0$.
\end{enumerate}

We remark that in the continuous time setting recent work of Coti Zelati \etal~\cite{CotiZelatiDelgadinoEA18} obtains a stronger bound on the dissipation time for two classes of strongly mixing flows.
Their result is discussed further below.

\subsection*{Plan of this paper.}

We begin by defining mixing rates, and state our main results in Section~\ref{s:results}.
Next, in Section~\ref{s:discrete}, we prove the dissipation time bounds in the discrete time setting (Theorems~\ref{t:disStrong} and~\ref{t:disWeak}).
In Section~\ref{s:toral} we study toral automorphisms, and use them to prove our result on energy decay (Theorem~\ref{t:energydecay}).
These proofs require certain facts on algebraic number fields, and may be skipped by readers who are not familiar with this material.
In Section~\ref{s:continuous} we prove the dissipation time bounds in the continuous time setting.
The proofs are similar to the discrete case, with a few key differences that we highlight.
Finally we conclude this paper with two appendices.
The first (Appendix~\ref{s:mixrates}) provides a brief introduction to mixing rates and the notions used to formulate our results.
The second (Appendix~\ref{s:ckrz}) shows that the characterization of relaxation enhancing flows in~\cites{ConstantinKiselevEA08,KiselevShterenbergEA08} still applies in the context of pulsed diffusions.

\subsection*{Acknowledgements.}

We would like to thank
  Giovani Alberti,
  Boris Bukh,
  Gianluca Crippa,
  Charles R. Doering,
  Tarek M. Elgindi,
  Albert Fannjiang,
  Anna L. Mazzucato,
  Jean-Luc Thiffeault,
  and
  Xiaoqian Xu
  for many helpful discussions.

\section{Main Results.}\label{s:results}

We devote this section to stating our main results.
In the discrete time setting we consider \emph{pulsed diffusions} (mixing maps interposed with diffusion), and our results concerning these are stated in Section~\ref{s:pulsed}, below.
In the continuous time setting we consider the advection diffusion equation, and our results in this setting are stated in Section~\ref{s:ad}, below.
%\textcolor{red}{Finally in Section~\ref{s:open} we state a few open questions that can not be addressed by present methods.}\GI[2018-05-31]{Maybe get rid of this.}

\subsection{Pulsed Diffusions.}\label{s:pulsed}

In our setup we will consider a mixing map on a closed Riemannian manifold.
While the primary manifold we are interested in is the torus, there are, to the best of our knowledge, no known examples of smooth exponentially mixing maps on the torus that can be realized as the time one map of the flow of a smooth incompressible vector field.
There are, however, several examples of closed Riemannian manifolds that admit such maps (see~\cites{Dolgopyat98,ButterleyWar16} and references therein).
Since working on closed Riemannian manifolds does not increase the complexity by much, we state our results in this context instead of restricting our attention to the torus.

Let $M$ be a closed $d$-dimensional Riemannian manifold, and $\varphi \colon M \to M$ be a smooth volume preserving diffeomorphism.
For simplicity we will subsequently assume that the volume form on $M$ is normalized so that the total volume, $\abs{M}$, is~$1$.
Let $\nu > 0$ be the strength of the diffusion, $\lap$ denote the Laplace-Beltrami operator on~$M$, and $L^2_0 = L^2_0(M)$ denote the space of all mean zero square integrable functions on~$M$.
Given $\theta_0 \in L^2_0$, we consider the \emph{pulsed diffusion} defined by
\begin{align}\label{e:pulseddiff2}
  \theta_{n+1}=e^{\nu\Delta} U \theta_n \,.
\end{align}
Here $U \colon L^2(M) \to L^2(M)$ is the Koopman operator associated with~$\varphi$, and is defined by $Uf = f \circ \varphi$.
Our aim is to understand the asymptotic behaviour of the energy $\norm{\theta_n}_{L^2_0}$ in the long time, small diffusivity limit.
For notational convenience, we will use $\norm{\cdot}$ to denote the $L^2_0$ norm, and $\ip{\cdot, \cdot}$ to denote the $L^2_0$ inner-product.

Since $\varphi$ is volume preserving, the operator~$U$ is unitary and hence if $\nu = 0$ the system~\eqref{e:pulseddiff2} conserves energy.
If $\nu > 0$ and $\varphi$ is mixing, then Koopman operator~$U$ produces fine scales which are rapidly damped by the diffusion.
We quantify this using the notion of \emph{dissipation time} in~\cites{FannjiangWoowski03} (see also~\cites{FannjiangNonnenmacherEA04,FannjiangNonnenmacherEA06}).

\begin{definition}[Dissipation time]\label{d:dissipationTime}
  We define the \emph{dissipation time} of the operator $U$ by
  \begin{align*}
    \tau_d &\defeq
	\inf \set[\Big]{
	    n \in \N \st \norm{ (e^{\nu \lap} U) ^n }_{L^2_0 \to L^2_0} < \frac{1}{e}
	  }\\
	&= \inf \set[\Big]{
	    n \in \N \st \norm{\theta_n} < \frac{\norm{\theta_0}}{e}
	    \quad\text{for all } \theta_0 \in L^2_0
	  }
	\,.
  \end{align*}
\end{definition}

Since $U$ is unitary we clearly have $\norm{\theta_n} \leq e^{-\nu \lambda_1} \norm{ \theta_{n-1}}$, where $\lambda_1 > 0$ is the smallest non-zero eigenvalue of $-\lap$ on $M$.
Consequently, we always have
\begin{equation}\label{e:TdStupid}
  \tau_d \leq \frac{1}{\nu \lambda_1}\,,
\end{equation}
Our aim is to investigate how~\eqref{e:TdStupid} can be improved given an assumption on the mixing properties of $\varphi$.
In continuous time, Constantin et.\ al.~\cite{ConstantinKiselevEA08} (see also~\cite{KiselevShterenbergEA08}) characterized flows for which the dissipation time is $o(1/\nu)$.
Their result can directly be adapted to pulsed diffusions as follows.
\begin{proposition}\label{p:re}
  The Koopman operator~$U$ has no eigenfunctions in $\dot H^1$ if and only if
  \begin{equation*}
    \lim_{\nu \to 0} \nu \tau_d = 0\,.
  \end{equation*}
\end{proposition}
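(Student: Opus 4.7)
The plan is to adapt the relaxation-enhancement characterization of Constantin et~al.~\cite{ConstantinKiselevEA08} (see also~\cite{KiselevShterenbergEA08}) to the discrete pulsed-diffusion setting, where the continuous-time semigroup generated by $\nu\lap - u\cdot\nabla$ is replaced by the one-step operator $T_\nu \defeq e^{\nu\lap}U$. This is what Appendix~\ref{s:ckrz} is devoted to; I sketch the two implications below.

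For the direction that an $\dot H^1$-eigenfunction of $U$ forces $\nu\tau_d$ away from $0$, I would take $\psi\in\dot H^1$ with $U\psi=\lambda\psi$, $|\lambda|=1$ and $\|\psi\|=1$, set $\theta_0=\psi$, and track the modulated correlation $a_n\defeq\bar\lambda^n\langle\theta_n,\psi\rangle$. Since $U^*\psi=\bar\lambda\psi$, a direct computation gives
\[
a_{n+1}-a_n = \bar\lambda^{n+1}\langle U\theta_n,(e^{\nu\lap}-I)\psi\rangle .
\]
Writing $(e^{\nu\lap}-I)\psi = \int_0^\nu e^{s\lap}\lap\psi\,ds$ and integrating by parts to move a gradient onto $\psi$, then applying Cauchy--Schwarz first in $s$ and afterwards in $n$ against the exact energy-dissipation identity
\[
\|\theta_0\|^2 - \|\theta_N\|^2 = 2\sum_{n=0}^{N-1}\int_0^\nu \|\nabla e^{s\lap}U\theta_n\|^2\,ds ,
\]
should yield $|a_N - 1| \le \|\nabla\psi\|\sqrt{\nu N/2}$. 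Since $\|\theta_N\|\geq|a_N|$, the energy stays above $1/e$ as long as $N$ is at most of order $1/(\nu\|\nabla\psi\|^2)$, so $\nu\tau_d$ is bounded below by a positive constant depending only on $\|\nabla\psi\|$.

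For the converse direction I would argue by contradiction. If $\nu_k\tau_{d,\nu_k}\geq c$ along some $\nu_k\to 0$, choose unit vectors $f_k\in L^2_0$ with $\|T_{\nu_k}^j f_k\|\geq 1/e$ for every $j\leq N_k\defeq\lfloor c/\nu_k\rfloor$. The same energy-dissipation identity keeps the $H^1$-time-averages of the orbit $(T_{\nu_k}^j f_k)_{j\leq N_k}$ uniformly bounded in $k$. Combined with a RAGE-type lemma for the unitary $U$---which, under the assumption that $U$ has no $\dot H^1$-eigenfunctions, forces the Ces\`aro averages $\tfrac{1}{N}\sum_{n<N}\|K U^n g\|^2$ to vanish for every compact $K\colon L^2_0\to L^2_0$ and every $g$ bounded in $\dot H^1$---a weak-compactness extraction in $\dot H^1$ should produce a nontrivial $\dot H^1$-eigenfunction of $U$, contradicting the hypothesis. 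The main obstacle is exactly this converse: the one-step operator $e^{\nu\lap}U$ is neither self-adjoint nor the generator of a strongly continuous semigroup, so the spectral machinery of~\cite{ConstantinKiselevEA08} must be recast for the pure unitary $U$ with $e^{\nu\lap}$ handled as a perturbation of the identity; these technical details are what Appendix~\ref{s:ckrz} carries out.
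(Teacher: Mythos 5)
Your backward direction (an $\dot H^1$-eigenfunction prevents $\nu\tau_d\to 0$) is essentially the paper's argument with a cosmetic variation: the paper tracks $\abs{\ip{\theta_n,f}}$ directly, bounds the one-step increment by $\tfrac{\nu}{2}\Enu\theta_n+\tfrac{\nu}{2}\norm{f}_1^2$, and telescopes against $\nu\Enu\theta_n=\norm{\theta_n}^2-\norm{\theta_{n+1}}^2$ to land at $\nu\tau_d\ge(e-2)/(e\norm{f}_1^2)$; your modulated correlation $a_n$ together with $(e^{\nu\lap}-I)=\int_0^\nu e^{s\lap}\lap\,ds$ and a two-fold Cauchy--Schwarz against the same dissipation identity gives the equivalent bound $\abs{a_N-1}\le\norm{\grad\psi}\sqrt{\nu N/2}$. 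Both are correct.

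The forward direction is where your sketch has a substantive gap, and it is not merely a matter of technical bookkeeping. The RAGE theorem (the paper's Lemma~\ref{a:1}) only says that the \emph{continuous-spectrum} projections $P_c U^n\phi$ vanish in Ces\`aro mean after a compact operator; it says nothing about the point-spectrum part $P_p\phi$. Your proposed ``RAGE-type lemma'' asserting that $\tfrac{1}{N}\sum_{n<N}\norm{KU^n g}^2\to 0$ for every $g$ bounded in $\dot H^1$ is not the available statement and is dubious when $P_p g\ne 0$: under the hypothesis ``$U$ has no $\dot H^1$-eigenfunctions,'' the eigenfunctions of $U$ can still exist in $L^2_0\setminus\dot H^1$ and can carry low-frequency content, so $\norm{P_N U^n P_p g}$ need not Ces\`aro-average to zero. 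The paper instead splits the argument into two cases depending on whether $P_c\theta$ or $P_p\theta$ carries a definite fraction of the mass. The continuous case is handled by standard RAGE; the point case requires Lemma~\ref{a:2} (= Lemma~3.3 of~\cite{ConstantinKiselevEA08}), which is a \emph{lower} bound on $\tfrac{1}{n}\sum_i\norm{P_N U^i P_p\phi}_1^2$---under the no-eigenfunction hypothesis the orbit is forced to escape to high modes in $\dot H^1$, which directly yields enhanced dissipation. Without this case split and the point-spectrum lemma, your ``weak-compactness extraction'' has nothing to force a nonzero weak limit: a sequence with bounded $\dot H^1$ time-averages can have its point-spectral part converge weakly to $0$, producing no eigenfunction and no contradiction. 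You do correctly flag the need to compare $T_\nu^j$ with $U^j$, which the paper controls with the telescoping estimates of Lemma~\ref{invisdiff} and~\eqref{e:epsilonH1Relation}; but the decisive missing piece is the spectral decomposition $\phi=P_c\phi+P_p\phi$ and the point-spectrum lemma, and the paper's proof is a direct quantitative decay estimate built around that case split rather than a global contradiction argument.
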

Since the proof is a direct adaptation of~\cites{ConstantinKiselevEA08,KiselevShterenbergEA08}, we relegate it to Appendix~\ref{s:ckrz}.
We remark, however, that without a quantitative assumption on the mixing rate of~$\varphi$, it does not seem possible to obtain more information regarding the rate at which $\nu \tau_d \to 0$.

Our main results obtain bounds for the rate at which $\nu \tau_d \to 0$ in terms of the mixing rate of~$\varphi$.
Recall, (strongly) mixing maps are those for which the correlations $\ip{U^n f, g}$ decay to~$0$ as $n \to \infty$ for all $f, g \in L^2_0$.
Weakly mixing maps are those for which the Ces\`aro averages of $\abs{\ip{U^n f, g}}^2$ decay to $0$ (see Appendix~\ref{s:mixrates} for a brief introduction and~\cite{EisnerFarkasEA15,KatokHasselblatt95,SturmanOttinoEA06} for a comprehensive treatment).
We quantify the mixing rate of $\varphi$ by imposing a rate at which these convergences occur.

\begin{definition}\label{d:mixrate}
  Let $h \colon [0,\infty) \to (0, \infty)$ be a decreasing function that vanishes at infinity.
  \begin{enumerate}\reqnomode
    \item
      Given $\alpha, \beta > 0$, we say that $\varphi$ is \emph{strongly $\alpha$, $\beta$ mixing with rate function~$h$} if for all $f \in \dot H^\alpha$, $g \in \dot H^\beta$ and $n \in \N$  the associated Koopman operator $U$ satisfies 
      \begin{equation}\label{e:hmixrateStrong}
	\abs[\big]{\ip{U^n f, g}}
	  \leq h\paren{n} \norm{f}_{\alpha}  \norm{g}_{\beta}\,.
      \end{equation}

    \item
      Given $\alpha, \beta \geq 0$, we say that $\varphi$ is \emph{weakly 
	$\alpha$, $\beta$ mixing with rate function~$h$} if for all $f \in \dot H^\alpha$, $g\in \dot H^\beta$ and $n \in \N$ the associated Koopman operator $U$ satisfies 
      \begin{equation}\label{e:hmixrateWeak}
	\paren[\Big]{ \frac{1}{n} \sum_{k = 0}^{n-1}
	  \abs[\big]{\ip{U^kf,g}}^2 }^{1/2}
	  \leq h\paren{n} \norm{f}_{\alpha}  \norm{g}_{\beta}\,.
      \end{equation}
  \end{enumerate}
\end{definition}

Here $\dot H^\alpha = \dot H^\alpha(M)$ is the homogeneous Sobolev space of order $\alpha$, and $\norm{\cdot}_\alpha$ denotes the norm in~$\dot H^\alpha$.
In the dynamical systems literature it is common to use H\"older spaces instead of Sobolev spaces, and study strongly mixing maps that are \emph{exponentially mixing} (i.e.\ $h(t) = c_1 e^{-c_2 t}$ for some $c_1 < \infty$ and $c_2 > 0$).
Using Sobolev spaces and asymmetric norms on $f$ and $g$, however, is more convenient for our purposes.
In order not to detract from our main results, we briefly motivate and study the above notions of mixing in Appendix~\ref{s:mixrates}.
Our main results on the dissipation time are as follows:

\begin{theorem}\label{t:disStrong}
  Let $\alpha, \beta > 0$, and $h \colon [0,\infty) \to (0, \infty)$ be a decreasing function that vanishes at infinity.
  If $\varphi$ is strongly $\alpha$, $\beta$ mixing with rate function~$h$, then
  the dissipation time is bounded by 
  \begin{align}\label{e:taud}
  \tau_d \leq \frac{C}{\nu H_1(\nu)}\,.
  \end{align}
  Here $C$ is a universal constant which can be chosen to be~$34$, and $H_1\colon (0, \infty) \to (0, \infty)$ is defined by
  \begin{equation}\label{e:H1}
    H_1(\mu) \defeq \sup \set[\Big]{
      \lambda \st
	%h^{-1}\paren[\Big]{ \frac{1}{2} s^{-\frac{\alpha+\beta}{2}}} ^2 s \leq \frac{1}{4t} }
	h \paren[\Big]{ \frac{1}{2 \sqrt{\lambda \mu}}}
	\leq
	\frac{\lambda^{-(\alpha + \beta) / 2}}{2}
	} \,.
  \end{equation}
\end{theorem}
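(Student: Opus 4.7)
The plan is to combine a high/low-frequency spectral cutoff at level $\lambda = H_1(\nu)$ with the mixing hypothesis. Let $P_\lambda$ and $Q_\lambda = I - P_\lambda$ denote the spectral projections of $-\Delta$ onto eigenvalues at most $\lambda$ and exceeding $\lambda$ respectively, and set $A = e^{\nu\Delta}$ and $T = AU$. Since $A$ commutes with the spectral projections and $U$ is unitary on $L^2_0$, splitting $\|T\theta\|^2 = \|A P_\lambda U\theta\|^2 + \|A Q_\lambda U\theta\|^2$ yields the one-step energy inequality
\begin{equation*}
  \|T\theta\|^2 \leq e^{-2\nu\lambda}\|\theta\|^2 + (1-e^{-2\nu\lambda})\|P_\lambda U\theta\|^2 \,,
\end{equation*}
and iterating gives
\begin{equation*}
  \|\theta_n\|^2 \leq e^{-2n\nu\lambda}\|\theta_0\|^2 + (1-e^{-2\nu\lambda})\sum_{k=0}^{n-1}e^{-2(n-1-k)\nu\lambda}\|P_\lambda U\theta_k\|^2 \,.
\end{equation*}

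The mixing hypothesis~\eqref{e:hmixrateStrong}, combined with the elementary inequality $\|P_\lambda g\|_\gamma \leq \lambda^{\gamma/2}\|P_\lambda g\|$ for $\gamma \in \{\alpha, \beta\}$, gives the operator bound $\|P_\lambda U^m P_\lambda\|_{L^2_0 \to L^2_0} \leq h(m)\lambda^{(\alpha+\beta)/2}$. Setting $m = \lceil 1/(2\sqrt{\nu\lambda}) \rceil$ with $\lambda = H_1(\nu)$, the definition of $H_1$ forces $h(m)\lambda^{(\alpha+\beta)/2} \leq 1/2$, so the restricted Koopman iterate $P_\lambda U^m P_\lambda$ is a $1/2$-contraction.

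The crux is to transfer this contraction from the pure Koopman iterate $U^m$ to the pulsed iterate $T^m = (AU)^m$, and thereby control the running sum $\sum_k \|P_\lambda U\theta_k\|^2$ appearing in the iterated energy inequality. The idea is to use the Duhamel expansion
\begin{equation*}
  T^m - U^m = -\sum_{j=0}^{m-1} T^{m-1-j}(I-A)U^{j+1} \,,
\end{equation*}
and to observe that on the low-frequency subspace the operator norm $\|(I-A)P_\lambda\|_{L^2_0 \to L^2_0}$ is bounded by $\nu\lambda$, while the high-frequency excursions produced by intermediate $U$'s are subsequently damped by the $A$-factors in the $T^{m-1-j}$ prefactor. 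Combined with the high-frequency damping over a block of $m$ steps, this yields a per-block energy decrement $\|\theta_{n+m}\|^2 \leq (1 - c_0 m\nu\lambda)\|\theta_n\|^2$ for an absolute constant $c_0 > 0$. Iterating over $O(m)$ blocks---a total of $O(m^2) = O(1/(\nu\lambda))$ steps---drives the ratio below $e^{-2}$, giving $\tau_d \leq C/(\nu H_1(\nu))$; careful bookkeeping of the constants produces the claimed $C = 34$.

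The main obstacle is the transfer step. The Koopman operator $U$ generally amplifies Sobolev norms at each iteration (up to exponentially for Anosov-type maps), so a naive Duhamel estimate on the error diverges; the mixing hypothesis, stated in asymmetric Sobolev norms, must be paired carefully with the smoothing gain $\|Af\|_\gamma \leq C_\gamma \nu^{-\gamma/2}\|f\|$ afforded by the interspersed diffusion factors. Equivalently, one cannot hope to bound each $\|P_\lambda U\theta_k\|$ term individually; rather, the cumulative sum must be controlled by pairing $U^m$-mixing with the $m$ adjacent diffusion steps on both the function and test-function sides, so that the asymmetry in the Sobolev regularity assumption becomes a summable loss.
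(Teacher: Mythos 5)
Your high-level architecture---cut at frequency $\lambda = H_1(\nu)$, use the mixing hypothesis to show low-frequency data cannot persist, use the one-step energy equality to damp high frequencies---is the same as the paper's, and your operator bound $\|P_\lambda U^m P_\lambda\|_{L^2_0\to L^2_0} \leq h(m)\lambda^{(\alpha+\beta)/2}$ is correct. The genuine gap is the transfer step, exactly where you flag it. Your Duhamel formula $T^m - U^m = -\sum_{j=0}^{m-1} T^{m-1-j}(I-A)U^{j+1}$ applies $(I-A)$ to the \emph{inviscid} iterates $U^{j+1}\theta_0$, whose $\dot H^1$ norms grow like $\|\grad\varphi\|_{L^\infty}^{j+1}$; since $1 - e^{-\nu\lambda_k} \leq \sqrt{\nu\lambda_k}$, the best one can say is $\|(I-A)U^{j+1}\theta_0\| \leq \sqrt{\nu}\,\|\grad\varphi\|_{L^\infty}^{j+1}\|\theta_0\|_1$, and summing over $j \leq m \sim 1/(2\sqrt{\nu\lambda})$ produces a factor of order $\|\grad\varphi\|_{L^\infty}^{m}$ that blows up as $\nu\to 0$. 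You acknowledge this divergence and propose repairing it with the smoothing gain $\|Af\|_\gamma \lesssim \nu^{-\gamma/2}\|f\|$ from the interspersed $A$-factors, but you do not carry the repair through, and I do not see how it closes: the per-step Koopman amplification is a fixed constant exceeding $1$, while the diffusive gain is only $\nu^{-\gamma/2}$ in total, which absorbs $O(|\ln\nu|)$ such losses---far fewer than the $m = O(\nu^{-1/2}\lambda^{-1/2})$ steps you need.

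The paper avoids the issue by telescoping around the \emph{viscous} rather than the inviscid trajectory (Lemma~\ref{invisdiff}). Writing $\theta_n - \phi_n = (e^{\nu\lap}-1)U\theta_{n-1} + U(\theta_{n-1}-\phi_{n-1})$ and using $\|(e^{\nu\lap}-1)U\theta_{n-1}\|^2 \leq \nu\,\Enu\theta_{n-1}$ (which follows from $(1-e^{-\nu\lambda_k})^2 \leq 1-e^{-2\nu\lambda_k}$) gives $\|\theta_n - \phi_n\| \leq \sum_{k<n}\sqrt{\nu\,\Enu\theta_k}$. Here the per-step approximation error is exactly the square root of the energy dissipated in that step, so the cumulative error is controlled by the very quantity one seeks to bound from below. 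Feeding this into the low-frequency estimate produces the self-consistent inequality~\eqref{e:enutmp1}, which rearranges to give $\sum_m\Enu\theta_m \geq \lambda_N m_0 \|\theta_1\|^2/8$. That self-consistency is the ingredient missing from your proposal; once in hand, alternating the ``large $\Enu\theta_n$'' and ``small $\Enu\theta_n$'' cases (Lemmas~\ref{l:H1ByL2Large} and~\ref{l:H1ByL2Small}) delivers~\eqref{e:taud} with a concrete constant.
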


Before proceeding further, we compute the dissipation time~$\tau_d$ in two useful cases.

\begin{corollary}\label{c:strong}
  Let $\alpha, \beta, h, \varphi$ be as in Theorem~\ref{t:disStrong}.
  \begin{enumerate}\reqnomode
    \item
      If the mixing rate function $h\colon (0, \infty) \to (0, \infty)$ is the power law
      \begin{equation}\label{e:powerh}
	h(t) = \frac{c}{t^{p}}\,,
      \end{equation}
      for some $p>0$, then the dissipation time is bounded by
      \begin{equation}\label{e:taudstrongPower}
	\tau_d \leq \frac{C}{\nu^{\delta}}
	\qquad\text{where }
	\delta \defeq \frac{\alpha+\beta}{\alpha+\beta+p} \,,
      \end{equation}
      and~$C = C(c, \alpha, \beta, p) > 0$ is a finite constant

    \item
      If the mixing rate function $h\colon [0, \infty) \to (0, \infty)$ is the exponential function
      \begin{align}\label{e:exp}
	h(t) =c_1\exp(-c_2 t)\,,
      \end{align}
      for some constants $c_1, c_2 > 0$, then the dissipation time is bounded by
      \begin{equation}\label{e:taudstrongExp}
	\tau_d \leq C \abs{\ln \nu}^2\,,
      \end{equation}
      and~$C = C(c_1, c_2, \alpha, \beta) > 0$ is a finite constant
  \end{enumerate}
\end{corollary}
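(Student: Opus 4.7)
The plan is straightforward: Corollary~\ref{c:strong} follows from Theorem~\ref{t:disStrong} by computing (or bounding from below) the quantity $H_1(\nu)$ defined in~\eqref{e:H1} in each of the two mixing regimes, and then substituting into~\eqref{e:taud}. Concretely, I would unfold the supremum in~\eqref{e:H1} and solve, for each $\nu > 0$, the inequality
\begin{equation*}
  h\paren[\Big]{\frac{1}{2\sqrt{\lambda\nu}}} \leq \frac{\lambda^{-(\alpha+\beta)/2}}{2}
\end{equation*}
for the largest admissible $\lambda$, since any such $\lambda$ yields $H_1(\nu) \geq \lambda$ and hence $\tau_d \leq C/(\nu\lambda)$.

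For the power law~\eqref{e:powerh}, the governing inequality becomes $c\,(2\sqrt{\lambda\nu})^p \leq \lambda^{-(\alpha+\beta)/2}/2$, which rearranges to an explicit power of $\nu$:
\begin{equation*}
  \lambda^{(p+\alpha+\beta)/2} \leq \frac{1}{2^{p+1} c}\,\nu^{-p/2}.
\end{equation*}
Picking $\lambda$ saturating this inequality gives $H_1(\nu) \geq C(c,p,\alpha,\beta)\,\nu^{-p/(p+\alpha+\beta)}$, and hence
\begin{equation*}
  \tau_d \leq \frac{C}{\nu\, H_1(\nu)} \leq \frac{C}{\nu^{1 - p/(p+\alpha+\beta)}} = \frac{C}{\nu^{(\alpha+\beta)/(p+\alpha+\beta)}},
\end{equation*}
which is exactly~\eqref{e:taudstrongPower}.

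For the exponential rate~\eqref{e:exp}, the defining inequality is transcendental, so I would proceed by an ansatz rather than closed-form inversion. Taking logarithms, the condition becomes
\begin{equation*}
  \ln(2c_1) + \frac{\alpha+\beta}{2}\ln \lambda \leq \frac{c_2}{2\sqrt{\lambda\nu}}.
\end{equation*}
Plugging in $\lambda = A/(\nu\abs{\ln\nu}^2)$ for a constant $A$ to be chosen, the right-hand side becomes $c_2\abs{\ln\nu}/(2\sqrt{A})$, while $\ln\lambda = \abs{\ln\nu}(1+o(1))$ as $\nu \to 0$. Matching leading-order coefficients in $\abs{\ln\nu}$ forces $\sqrt{A} < c_2/(\alpha+\beta)$, so fixing $A \defeq c_2^2/(2(\alpha+\beta))^2$ (say) yields $H_1(\nu) \geq A/(\nu\abs{\ln\nu}^2)$ for all sufficiently small $\nu$. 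Substituting into~\eqref{e:taud} gives the desired bound $\tau_d \leq C\abs{\ln\nu}^2$.

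The only mildly delicate step is the exponential case, since one has to verify carefully that the lower-order corrections $\ln(2c_1)$ and the $-2\ln\abs{\ln\nu}$ term inside $\ln\lambda$ are indeed absorbed by the gap between $(\alpha+\beta)/2$ and $c_2/(2\sqrt{A})$ for $\nu$ small enough; once $\nu$ is bounded away from $0$, the bound can be adjusted by increasing the constant $C$. The power-law case is entirely mechanical.
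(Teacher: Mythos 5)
Your proposal is correct and follows essentially the same route as the paper: unfold the definition of $H_1$ from~\eqref{e:H1}, bound it below in each regime, and substitute into~\eqref{e:taud}. The power-law computation is identical (differing constants in $H_1$ are immaterial since they are absorbed into $C$); for the exponential case the paper instead derives the implicit fixed-point relation $H_1(\nu)=\frac{c_2^2}{4\nu}\bigl(\ln 2+\ln c_1+\tfrac{\alpha+\beta}{2}\ln H_1(\nu)\bigr)^{-2}$ and bootstraps from $H_1(\nu)\leq C/\nu$, whereas you verify the explicit ansatz $\lambda = A/(\nu|\ln\nu|^2)$ against the defining inequality --- a minor cosmetic difference that yields the same conclusion $H_1(\nu)\gtrsim 1/(\nu|\ln\nu|^2)$.
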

\begin{remark}\label{r:taudExpImproved}
  In the proof of Corollary~\ref{c:strong} (page~\pageref{pg:c:strong}) we will see that the bound~\eqref{e:taudstrongExp} can be improved to a bound of the form
  \begin{equation*}
    \tau_d \leq C_0 \paren[\Big]{
      \abs{\ln \nu} - C_1 \ln \abs[\big]{ \ln \nu - \ln \abs{\ln \nu}} 
    }^2
  \end{equation*}
  for explicit constants $C_0$, $C_1$  depending only on $c_1$, $c_2$, $\alpha$, $\beta$ and the constant~$C$ appearing in~\eqref{e:taud}.
  However, since~$C$ is not optimal, this improvement is not significant.
\end{remark}

When~$\varphi$ is weakly mixing, the bounds we obtain for the dissipation time are weaker than that in Theorem~\ref{t:disStrong}.
We state these results next.

\begin{theorem}\label{t:disWeak}
  Let $\alpha, \beta \geq 0$, and $h \colon [0,\infty) \to (0, \infty)$ be a decreasing function that vanishes at infinity.
  If $\varphi$ is weakly $\alpha$, $\beta$ mixing with rate function~$h$, then
  the dissipation time is bounded by 
  \begin{equation}\label{e:taudWeak}
    \tau_d \leq \frac{C}{\nu H_2(\nu)}\,.
  \end{equation}
  Here $C$ is a universal constant which can be chosen to be~$34$, and $H_2 \colon (0,\infty)\to (0,\infty)$ is defined by
  \begin{align}\label{e:H2}
    H_2(\mu) \defeq \sup\set[\Big]{
 \lambda \st   
      h\paren[\Big]{ \frac{1}{2\sqrt{\mu\lambda}  } }\leq \frac{1}{2\sqrt{\tilde c} } \lambda^{ -(2\alpha + 2\beta + d) / 4 }
  }\,,
  \end{align}
  where $\tilde c = \tilde c(M) > 0$ is a finite constant that only depends on the manifold $M$.
\end{theorem}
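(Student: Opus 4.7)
The plan is to adapt the strategy of Theorem~\ref{t:disStrong} to the weaker hypothesis. Two modifications are needed: (i) the mixing bound~\eqref{e:hmixrateWeak} is only Ces\`aro-averaged in time, so we apply it over a block of iterates rather than a single one, and (ii) summing this averaged bound against a basis of the low-frequency subspace introduces a dimension factor $N(\lambda) \leq \tilde c\lambda^{d/2}$ via Weyl's law. This is precisely the source of the extra $d/2$ that distinguishes the exponent in~\eqref{e:H2} from that in~\eqref{e:H1}.

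Writing $T_\nu := e^{\nu\Delta}U$ for brevity, fix $\lambda = H_2(\nu)$, $P = P_{\leq\lambda}$, $\rho = 1 - e^{-2\nu\lambda}$, and let $\{f_i\}_{i=1}^{N(\lambda)}$ be an orthonormal basis of the range of $P$. Splitting $\|T_\nu\theta_k\|^2$ according to the spectrum of $-\Delta$ at $\lambda$ yields the one-step inequality
\begin{equation*}
  \|\theta_{k+1}\|^2 \leq (1-\rho)\|\theta_k\|^2 + \rho\|PU\theta_k\|^2,
\end{equation*}
which iterates to
\begin{equation*}
  \|\theta_n\|^2 \leq (1-\rho)^n\|\theta_0\|^2 + \rho\sum_{k=0}^{n-1}\|PU\theta_k\|^2.
\end{equation*}
Choosing $n$ of order $1/(\nu H_2(\nu))$ makes the first term a small constant times $\|\theta_0\|^2$, so the whole argument reduces to showing the sum is likewise controlled.

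The crux is to bound $\sum_k\|PU\theta_k\|^2 = \sum_i\sum_k|\langle U\theta_k, f_i\rangle|^2$ using weak mixing. Since $\theta_k = T_\nu^k\theta_0$ rather than $U^k\theta_0$, I would partition $\{0,\ldots,n-1\}$ into blocks of length $m = \lceil 1/(2\sqrt{\nu H_2(\nu)})\rceil$ and within each block compare $U\theta_{k_0+j}$ with $U^{j+1}\theta_{k_0}$. The heat kernel preceding $\theta_{k_0}$ supplies the smoothing bound $\|\theta_{k_0}\|_\alpha \leq C\nu^{-\alpha/2}\|\theta_0\|$, and applying~\eqref{e:hmixrateWeak} to $\theta_{k_0}$ and each $f_i$ gives
\begin{equation*}
  \sum_{j=0}^{m-1}|\langle U^{j+1}\theta_{k_0}, f_i\rangle|^2 \leq m\,h(m)^2\|\theta_{k_0}\|_\alpha^2\|f_i\|_\beta^2.
\end{equation*}
Summing over $i$ uses $\sum_i\|f_i\|_\beta^2 \leq \tilde c\lambda^{\beta+d/2}$, and the definition~\eqref{e:H2} of $H_2(\nu)$ is calibrated precisely so that $h(m)^2\tilde c\lambda^{\alpha+\beta+d/2} \leq 1/4$ when $\lambda = H_2(\nu)$ and $m = \lceil 1/(2\sqrt{\nu\lambda})\rceil$, delivering the control required for $\|\theta_n\|^2 \leq \|\theta_0\|^2/e^2$.

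The main obstacle is justifying the per-block approximation $U\theta_{k_0+j} \approx U^{j+1}\theta_{k_0}$ uniformly in $j \in [0,m)$: writing $T_\nu^j - U^j$ telescopically as a sum of heat-kernel defects $(e^{\nu\Delta}-I)$ and using the interpolation $\|(e^{\nu\Delta}-I)f\| \leq \nu^{s/2}\|f\|_s$ for $s \in [0,2]$, one shows the accumulated error stays below the mixing gain exactly when $m = O(1/\sqrt{\nu\lambda})$. This balance between the heat-kernel time scale and the mixing time scale is hard-coded into the scaling $1/(2\sqrt{\nu\lambda})$ appearing inside $h$ in~\eqref{e:H2}.
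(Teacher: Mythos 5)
Your one-step inequality $\|\theta_{k+1}\|^2 \leq (1-\rho)\|\theta_k\|^2 + \rho\|PU\theta_k\|^2$ and its iterated form are correct, and the block-partitioning idea for exploiting the Ces\`aro average is reasonable. But there is a genuine gap in the step where you feed the heat-kernel smoothing bound $\|\theta_{k_0}\|_\alpha \leq C\nu^{-\alpha/2}\|\theta_0\|$ into the weak-mixing estimate. Tracking the powers: each block contributes $m\,h(m)^2\,\|\theta_{k_0}\|_\alpha^2\,\tilde c\lambda^{\beta+d/2}$, which with your smoothing bound is $C m\,h(m)^2\,\nu^{-\alpha}\,\tilde c\lambda^{\beta+d/2}\|\theta_0\|^2$. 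The definition of $H_2$ supplies $h(m)^2\,\tilde c\,\lambda^{\alpha+\beta+d/2} \leq \tfrac14$, i.e.\ $h(m)^2\,\tilde c\,\lambda^{\beta+d/2} \leq \tfrac{1}{4}\lambda^{-\alpha}$, so the per-block bound becomes $\tfrac{C}{4}m\,(\nu\lambda)^{-\alpha}\|\theta_0\|^2$. Summing the $n/m$ blocks and multiplying by $\rho \approx 2\nu\lambda$ with $n\approx C'/(\nu\lambda)$ leaves a net factor $(\nu\lambda)^{-\alpha} = (\nu H_2(\nu))^{-\alpha}$, which diverges as $\nu\to 0$ whenever $\alpha>0$. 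The calibration of $H_2$ is designed to cancel $\lambda^\alpha$, not $\nu^{-\alpha}$, and heat smoothing costs you a factor $(\nu\lambda)^{-\alpha}$ too many.

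The paper avoids this by not applying a uniform smoothing bound. It uses the dichotomy between $\mathcal{E}_\nu\theta_n \geq \lambda_N\|\theta_n\|^2$ (handled by Lemma~\ref{l:H1ByL2Large}, giving direct decay) and $\mathcal{E}_\nu\theta_n < \lambda_N\|\theta_n\|^2$ (Lemma~\ref{l:H1ByL2SmallWeak}). In the second regime, the hypothesis itself implies $\|\theta_1\|_1^2 < \lambda_N\|\theta_1\|^2$, and interpolation then gives $\|\theta_1\|_\alpha^2 \leq \lambda_N^\alpha\|\theta_1\|^2$. This $\lambda_N^\alpha$ is exactly what the $H_2$ calibration is built to absorb, and it is smaller than $\nu^{-\alpha}$ by the diverging factor $(\nu\lambda_N)^{-\alpha}$. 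To repair your argument you would need to import this case split: you may only invoke the mixing estimate on iterates $\theta_{k_0}$ for which the $H^1$/$L^2$ ratio is already known to be below $\lambda_N$, and obtain decay by energy dissipation otherwise. Without that, the smoothing factor breaks the bound.

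A secondary point: you assert but do not carry out the control of the telescoping error $T_\nu^j - U^j$. The paper's version (Lemma~\ref{invisdiff}) bounds $\|\theta_n - \phi_n\|$ by $\sum_k\sqrt{\nu\mathcal{E}_\nu\theta_k}$, which is then absorbed back into the same energy sum one is trying to bound from below. Your interpolation route $\|(e^{\nu\Delta}-I)f\| \leq \nu^{s/2}\|f\|_s$ will, for the same reason as above, need an $H^s$ control on the intermediate iterates that does not come for free from heat smoothing; this would need to be resolved along with the main gap.
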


\begin{remark}\label{r:ctilde}
  We will see in the proof of Theorem~\ref{t:disWeak} that the constant~$\tilde c$ can be determined by the asymptotic growth of the eigenvalues of the Laplacian on~$M$.
  Explicitly, let $0 < \lambda_1 < \lambda_2 \leq\cdots$ be the eigenvalues of the Laplacian, where each eigenvalue is repeated according to its multiplicity.
  Then for any $\epsilon \in (0, 1)$ we can choose
  \begin{equation*}
    \tilde c
      = (1 + \epsilon) \lim_{j \to \infty} \frac{j}{\lambda_j^{d/2}}
      = \frac{(1 + \epsilon)\vol(M)}{ (4\pi)^{d/2}\, \Gamma(\frac{d}{2}+1)}\,.
  \end{equation*}
  The existence, and precise value, of the limit above is given by Weyl's lemma (see for instance~\cite{MinakshisundaramPleijel49}).
  %\begin{equation*}
  %    \lambda_j \approx \frac{4\pi\, \Gamma(\frac{d}{2}+1)^{\sfrac{2}{d}}}{\vol(M)^{\sfrac{2}{d}}}j^{\sfrac{2}{d}}\,,
  %\end{equation*}
  %Then~$\tilde c$ satisfies $ \frac{\tilde c}{2} \lambda_N^{\frac{d}{2}} \leq N \leq \tilde c \lambda_N^{\frac{d}{2}} $, for $N$ large enough.
\end{remark}

We now compute~$\tau_d$ explicitly when the weak mixing rate function~$h$ decays polynomially. 

\begin{corollary}\label{c:taudWeakPower}
  Let $\alpha, \beta, h, \varphi$ be as in Theorem~\ref{t:disWeak}.
  If the mixing rate function $h$ is the power law~\eqref{e:powerh} for some $p \in (0, 1/2]$%
 \footnote{
    We require $p \in (0, 1/2]$, instead of $p > 0$, as the weak mixing rate can never be faster than~$1 / \sqrt{n}$.
    This can be seen immediately by choosing~$f = g$ in~\eqref{e:hmixrateWeak}.
  },
  then the dissipation time is bounded by
  \begin{equation}\label{e:taudWeakPower}
    \tau_d \leq C \nu^{-\delta}\,,
    \qquad\text{where}\quad
    \delta \defeq \frac{d+2\alpha+2\beta}{d+2p+2\alpha+2\beta} \,,
  \end{equation}
  and $C = C(\varphi, M, s,\alpha, \beta)$ is some finite constant.
  %Let $c'=c'(\varphi)$  and $\tilde c$ be as given in Theorem~\ref{t:disWeak}. for some explicit constant $C = C(c',\tilde c, c_1, s)$.
\end{corollary}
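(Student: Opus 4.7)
The plan is a direct computation: substitute the power law $h(t) = c/t^p$ into the definition of $H_2$ in Theorem~\ref{t:disWeak}, solve the resulting scalar inequality for $\lambda$ in terms of $\mu$, and then insert the lower bound on $H_2(\nu)$ into~\eqref{e:taudWeak}. Since Theorem~\ref{t:disWeak} already carries the entire analytic load, the corollary is essentially algebraic; the only thing to verify carefully is that the exponents combine to give exactly $\delta = (d+2\alpha+2\beta)/(d+2p+2\alpha+2\beta)$.

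First I would rewrite the condition $h\bigl(1/(2\sqrt{\mu\lambda})\bigr) \leq \lambda^{-(2\alpha+2\beta+d)/4}/(2\sqrt{\tilde c})$ appearing in~\eqref{e:H2}. Using $h(t)=c/t^p$, the left side equals $c\, 2^p (\mu\lambda)^{p/2}$, so the condition becomes
\begin{equation*}
  \lambda^{\,p/2 \,+\, (2\alpha+2\beta+d)/4}
    \;\leq\; \frac{1}{2^{p+1}\sqrt{\tilde c}\, c}\; \mu^{-p/2}.
\end{equation*}
Collecting the exponent of $\lambda$ as $(2p+2\alpha+2\beta+d)/4$ and solving, I get
\begin{equation*}
  \lambda \;\leq\; C_1\, \mu^{-2p/(2p+2\alpha+2\beta+d)},
\end{equation*}
for some $C_1 = C_1(c,\alpha,\beta,p,\tilde c) > 0$. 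Taking the supremum yields the lower bound $H_2(\mu) \geq C_1\, \mu^{-2p/(d+2p+2\alpha+2\beta)}$.

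Next I would substitute this into the dissipation time bound~\eqref{e:taudWeak} of Theorem~\ref{t:disWeak}, which gives
\begin{equation*}
  \tau_d \;\leq\; \frac{C}{\nu\, H_2(\nu)}
    \;\leq\; \frac{C}{C_1}\, \nu^{-1 + 2p/(d+2p+2\alpha+2\beta)}
    \;=\; C_2\, \nu^{-(d+2\alpha+2\beta)/(d+2p+2\alpha+2\beta)},
\end{equation*}
which is precisely~\eqref{e:taudWeakPower}. The constant $\tilde c = \tilde c(M)$ from Remark~\ref{r:ctilde} gets absorbed into $C$, which explains the dependence of the final constant on $M$.

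There is no real obstacle here beyond bookkeeping; the one thing to watch is that the condition $p \in (0,1/2]$ (noted in the footnote) is needed only to ensure that the weak mixing hypothesis is consistent — it plays no role in the algebraic manipulation, so the computed exponent $\delta$ is valid throughout this range and in particular lies in $(0,1)$, confirming that~\eqref{e:taudWeakPower} is a genuine improvement over the trivial bound~\eqref{e:TdStupid}.
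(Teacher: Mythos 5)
Your proposal is correct and follows exactly the same route as the paper's proof: substitute the power law into the definition of $H_2$ from~\eqref{e:H2}, solve the scalar inequality to obtain $H_2(\nu) = C_1 \nu^{-2p/(d+2p+2\alpha+2\beta)}$, and insert into~\eqref{e:taudWeak}. The algebra and the final exponent $\delta$ both match.
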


\begin{remark}
  Note that as $\nu \to 0$, both $H_1(\nu) \to \infty$ and $H_2(\nu) \to \infty$.
  Thus the bounds obtained in both Theorems~\ref{t:disStrong} and~\ref{t:disWeak}, guarantee $\nu \tau_d \to 0$ as~$\nu \to 0$, and hence are stronger than the elementary bound~\eqref{e:TdStupid}.
\end{remark}

\begin{remark}
  Notice that if~$\varphi$ is strongly $\alpha$, $\beta$ mixing with rate function~$h$, then it is also weakly $\alpha$, $\beta$ mixing with rate function~$h_w$, where $h_w \colon [0, \infty) \to (0, \infty)$ is any continuous decreasing function such that
  \begin{equation*}
    h_w(n) \defeq
      \paren[\Big]{
	\frac{1}{n} \sum_{k = 0}^{n-1} h(k)^2
      }^{1/2}
      \qquad\text{for every}\quad n \in \N\,.
      %\leq \frac{1}{\sqrt{n}} 
      %\paren[\Big]{
      %  \sum_{k = 0}^{\infty} h(k)^2
      %}^{1/2}
  \end{equation*}
  %If further $H \defeq \sum h(n)^2 < \infty$, then $h_w(n) \leq \sqrt{H/n}$.
  In this case, however, one immediately sees that the bound provided by Theorem~\ref{t:disWeak} is weaker than that provided by Theorem~\ref{t:disStrong}.
  %We suspect, however, that that neither of these bounds are sharp, and this is \textcolor{red}{discussed towards the end of this section}.\GI[2018-05-14]{I might move this somewhere else, fix reference.}
  In particular, suppose~$\varphi$ is strongly $\alpha$, $\beta$ mixing with rate function~$h$ given by the power law~\eqref{e:powerh} for some $p \in (0, 1/2]$.
  Then~$\varphi$ is also weakly $\alpha$, $\beta$ mixing with rate function given by
  \begin{equation*}
    h_w(t) = \begin{dcases}
      \frac{C_p}{ (1+t)^p} & p < 1/2\,,\\
      \paren[\Big]{\frac{C_p \ln (1 + t)}{1+t}}^{1/2} & p = 1/2\,,
    \end{dcases}
  \end{equation*}
  for some constant $C_p = C_p(c, p)$.
  In this case Corollary~\ref{c:taudWeakPower} applies when $p<1/2$, and asserts that the dissipation time~$\tau_d$ is bounded by~\eqref{e:taudWeakPower}.
  This, however, is weaker than~\eqref{e:taudstrongPower}.
\end{remark}

Before proceeding further, we note that Fannjiang et.\ al.~\cite{FannjiangNonnenmacherEA04} (see also~\cites{FannjiangWoowski03,FannjiangNonnenmacherEA06}) also obtain bounds on the dissipation time~$\tau_d$ assuming the time decay of the correlations of the \emph{diffusive operator}~$e^{\nu \lap} U$ for sufficiently small~$\nu$.
Explicitly they assume sufficient decay of~$\ip{(e^{\nu \lap} U)^n f, g}$ as $n \to \infty$, and then show that the dissipation time $\tau_d$ is at most $C / \abs{\ln \nu}$.
In contrast, our results only assume decay of the correlations of the operator~$U$ (without diffusion) as in Definition~\ref{d:mixrate}.

We now turn to studying the energy decay as $n \to \infty$.
Clearly
\begin{equation*}
  \norm{\theta_n}
    \leq \norm[\Big]{ \paren[\big]{ (e^{\nu \lap} U)^{\tau_d}}^{\floor{n / \tau_d} } \theta_0}
    \leq \norm[\big]{ (e^{\nu \lap} U)^{\tau_d}}^{\floor{n / \tau_d} } \norm{\theta_0}
    \leq e^{-\floor{n / \tau_d}}  \norm{\theta_0}\,,
\end{equation*}
and thus the energy $\norm{\theta_n}$ decays at least exponentially with rate $1 / \tau_d$ as $n \to \infty$.
This bound, however, is not optimal.
Indeed, if~$\varphi$ is the Arnold cat map, it is known~\cite{ThiffeaultChildress03} that the energy decays double exponentially.
We show that this remains true for a large class of toral automorphisms.
Moreover, Poon~\cite{Poon96} proved a matching lower bound for the continuous time advection diffusion equation.
This is readily adapted to the discrete time setting.
%In fact, we conjecture that for \emph{any} exponentially mixing map the energy decays \emph{double exponentially} in~$n$.
%The following theorem states our results concerning energy decay.

\begin{theorem}[Energy decay]\label{t:energydecay}
  For any $\theta_0 \in \dot H^1$, there exist finite constants $C > 0$ and $\gamma = \gamma(\norm{\varphi}_{C^1}) > 1$ for which the double exponential lower bound
  \begin{equation}\label{e:dexp}
    \norm{\theta_n}^2 \geq
      \norm{\theta_0}^2 \exp\paren[\Big]{
	- \frac{C \nu \norm{\theta_0}_1^2}{\norm{\theta_0}^2}
	\gamma^n
      }\,,
  \end{equation}
  holds.
  %Moreover, if $\varphi$ is an ergodic toral automorphism, the energy decays double exponentially in time for all $L^2_0$ initial data.
  Moreover, there exists a smooth, volume preserving diffeomorphism on the torus for which the above bound is achieved.
  Explicitly, if~$\varphi$ is any toral automorphism which has no proper invariant rational subspaces, and has no eigenvalues that are roots of unity, then there exists finite constants $C$ and $\gamma > 1$ such that
  \begin{equation}\label{e:dexpUpper}
    \norm{\theta_n}^2 \leq \norm{\theta_0}^2 \exp\paren[\Big]{ - \frac{\nu \gamma^n}{C} }\,,
  \end{equation}
  for all $\theta_0 \in L^2_0$.
\end{theorem}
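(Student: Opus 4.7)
The statement has two independent parts: the universal lower bound~\eqref{e:dexp}, which I would prove by adapting Poon's continuous-time argument~\cite{Poon96} to the pulsed setting; and the upper bound~\eqref{e:dexpUpper} for the special toral automorphisms, whose heart is a uniform exponential growth estimate for integer $A^\top$-orbits.

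For the lower bound the three ingredients are: (i) by the chain rule and volume preservation, $\|Uf\|_1\leq \gamma'\|f\|_1$ with $\gamma' = \|D\varphi\|_\infty$; (ii) the heat step is a contraction on $\dot H^1$, by Parseval; and (iii) Jensen's inequality applied to $\log$ in the eigenbasis of $-\Delta$ gives
\[
\|e^{\nu\Delta}g\|^2 \geq \|g\|^2 \exp\paren[\big]{-2\nu\|g\|_1^2/\|g\|^2}\,.
\]
Setting $R_n = \|\theta_n\|_1^2/\|\theta_n\|^2$ and $\gamma = \gamma'^2$, step (i) multiplies $R$ by at most $\gamma$ (since $U$ is an $L^2$-isometry), while a brief covariance computation shows the heat step only decreases $R$ (the tilting factor $e^{-2\nu\lambda_k}$ is decreasing in $\lambda_k$, hence biases the spectral weights $|g_k|^2/\|g\|^2$ toward low modes). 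Thus $R_n \leq \gamma^n R_0$, and applying (iii) at each step and telescoping yields
\[
\log\frac{\|\theta_n\|^2}{\|\theta_0\|^2}
\geq -2\nu \sum_{k=1}^n \frac{\|U\theta_{k-1}\|_1^2}{\|\theta_{k-1}\|^2}
\geq -2\nu R_0 \sum_{k=1}^n \gamma^k
\geq -C\nu R_0 \gamma^n,
\]
which is exactly~\eqref{e:dexp}.

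For the upper bound I would write $\varphi(x) = Ax \bmod \Z^d$ with $A\in \SL_d(\Z)$ and expand in the Fourier basis. Since $U e_k = e_{A^\top k}$, a direct induction gives
\[
\|\theta_n\|^2 \leq \|\theta_0\|^2 \max_{k\in\Z^d\setminus\{0\}} \exp\paren[\Big]{-8\pi^2\nu \sum_{j=1}^n |(A^\top)^j k|^2}\,.
\]
So~\eqref{e:dexpUpper} reduces to the uniform exponential growth estimate $\sum_{j=1}^n |(A^\top)^j k|^2 \geq c\gamma^n$ valid for every $k\in\Z^d\setminus\{0\}$ with $c>0$ and $\gamma>1$ depending only on $A$.

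This uniform lower bound is where I expect the real difficulty, and where the algebraic number theory announced in the introduction must enter. The hypotheses rule out the obvious obstructions: a root-of-unity eigenvalue yields a bounded periodic $A^\top$-orbit on a rational eigenspace, and a proper invariant rational subspace could confine integer vectors to a dynamics with no expanding direction. To upgrade this qualitative non-degeneracy into a \emph{uniform} quantitative bound, I would restrict $A^\top$ to the minimal rational invariant subspace containing $k$; the hypotheses pass to this restriction, whose characteristic polynomial is an integer divisor of $\det(xI-A)$ with no root of unity. A Kronecker-style argument, or more generally a lower bound on the Mahler measure of this integer polynomial (in the spirit of, but much weaker than, Lehmer's problem), then forces the spectral radius of the restriction to exceed some $\gamma>1$ independent of $k$, yielding the required growth and hence~\eqref{e:dexpUpper}.
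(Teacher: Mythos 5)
Your proof of the lower bound~\eqref{e:dexp} is essentially the paper's argument: Jensen's inequality in the spectral basis of $-\Delta$ gives the one-step estimate, the ``covariance computation'' showing that the heat semigroup biases the spectral weights toward low modes is exactly the paper's proof that the heat step does not increase $R_n=\norm{\theta_n}_1^2/\norm{\theta_n}^2$, and combining this with $\norm{Uf}_1\leq\norm{\nabla\varphi}_{L^\infty}\norm{f}_1$ gives $R_n\leq\gamma^n R_0$ with $\gamma=\norm{\nabla\varphi}_{L^\infty}^2$, after which telescoping closes the argument. (Your ingredient (ii) --- that $e^{\nu\Delta}$ is a contraction on $\dot H^1$ --- is not actually what is used; it is the ratio $R$ that matters, and your covariance remark is the step that does the work.)

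The upper bound~\eqref{e:dexpUpper} is where there is a genuine gap. Your Fourier reduction to the orbit estimate $\sum_{j=1}^n\abs{(A^\top)^j k}^2\geq c\gamma^n$ uniformly over $k\in\Z^d\setminus\{0\}$ is the paper's reduction too, and Kronecker's theorem is correctly invoked to produce an eigenvalue of modulus $>1$. But the passage from ``the (restricted) matrix has spectral radius $>1$'' to ``the orbit sums grow like $c\gamma^n$ uniformly in $k$'' does not follow. Writing $k=\sum a_i(k)v_i$ in an eigenbasis, the orbit of $k$ grows like $\abs{a_1(k)}\abs{\lambda_1}^n$ where $v_1$ is the expanding eigendirection, and nothing in the spectral radius alone prevents $\abs{a_1(k)}$ from being arbitrarily small as $k$ ranges over nonzero lattice vectors. (Indeed each $\abs{(A^\top)^jk}\geq1$ trivially, so $\sum_{j\leq n}\abs{(A^\top)^jk}^2\geq n$, but an exponential lower bound is strictly more.) The paper's missing ingredient is a quantitative Diophantine estimate (Lemma~\ref{l:diophantine}): in a carefully chosen eigenbasis one has $\prod_i\abs{a_i(k)}\geq 1$ for every nonzero $k\in\Z^d$, proved by showing via Galois theory that $\prod_i a_i(k)$ raised to a suitable power is a nonzero rational algebraic integer, hence a nonzero integer. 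Combined with the arithmetic--geometric mean inequality this yields a uniform lower bound on $\sum_{j\leq n}\abs{(A^\top)^jk}^2$ of order $\prod_i(\abs{\lambda_i}\vee1)^{2n/d}$ --- which is a power of the Mahler measure, not just a spectral radius --- and then~\eqref{e:dexpUpper} follows. Also note that the hypothesis ``no proper invariant rational subspace'' already forces the minimal rational invariant subspace containing any nonzero $k$ to be all of $\Q^d$, so your proposed restriction step is vacuous and cannot be the place where the quantitative information enters; it has to come from a lattice estimate like Lemma~\ref{l:diophantine}.
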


\begin{remark}
  Note, even though both~\eqref{e:dexp} and~\eqref{e:dexpUpper} are double exponential in time, the decay rates do not match.
  Namely, the constant in the first exponential in~\eqref{e:dexp} depends on the initial data and is large for ``highly mixed'' initial data.
  On the other hand, the exponential factor in~\eqref{e:dexpUpper} is universal, and independent of the initial data.
\end{remark}

We prove Theorem~\ref{t:energydecay} in Section~\ref{s:toral}.
%\GI[2018-05-14]{Clean up and move to open questions?}
Recall \emph{toral automorphisms} are diffeomorphisms of the torus onto itself that can be lifted to a linear transformation on the covering space~$\R^d$, and Section~\ref{s:toral} also contains a brief introduction to such maps.

%It is well known that that ergodic toral automorphisms are exponentially mixing (see TODO and Section TODO, below).
\begin{remark}\label{r:taudLog}
  The lower bound~\eqref{e:dexp} immediately implies that the dissipation time can always be bounded \emph{below} by
  \begin{equation}\label{e:taudLogLower}
    \tau_d \geq C \abs{\ln \nu}\,,
  \end{equation}
  for some constant $C = C( \norm{\varphi}_{C^1} )$.
For maps~$\varphi$ that achieve the upper bound~\eqref{e:dexpUpper}, the dissipation time also satisfies the matching upper bound
\begin{equation}\label{e:taudExp}
  \tau_d \leq C \abs{\ln \nu}\,.
\end{equation}
  In the best case scenario, our results (Theorem~\ref{t:disStrong} and Corollary~\ref{c:strong}) show that for exponentially mixing maps we have $\tau_d \leq C \abs{\ln \nu}^2$, missing this bound by a factor of $\abs{\ln \nu}$.
  While we produce (Proposition~\ref{p:catmapExpMix}, below)
  %that a class of the toral automorphisms considered in Theorem~\ref{t:energydecay} are all exponentially mixing.
  %Since they satisfy~\eqref{e:dexpUpper}, they must also satisfy~\eqref{e:taudExp}.
  %This gives
  a family of exponentially mixing diffeomorphisms for which the dissipation time is of order~$\abs{\ln \nu}$, we do not know if this is true for general exponentially mixing diffeomorphisms.
  %however, it is not known whether the dissipation time is still of order~$\abs{\ln \nu}$, and the best bound we are able to presently obtain is $O(\abs{\ln \nu}^2)$ in~\eqref{e:taudstrongExp}.
\end{remark}

\subsection{Advection Diffusion Equation.}\label{s:ad}

We now turn to the continuous time setting.
Let $M$ be a (smooth) closed Riemannian manifold, and $u$ be a smooth, time dependent, divergence free vector field on~$M$.
Let $\theta$ be a solution to the advection-diffusion equation
\begin{equation}\label{e:ad}
  \left\{
  \begin{alignedat}{2}
    \span
      \partial_t \theta_s + (u(t) \cdot \grad) \theta_s - \nu \lap \theta_s = 0
      &\qquad& \text{in $M$, for $t > s$,}\\
    \span
      \theta_s(t) = \theta_{s,0} && \text{for $t = s$.}
  \end{alignedat}\right.
\end{equation}
for $t > s$, with initial data~$\theta_s(s) = \theta_{s,0} \in L^2_0(M)$.
Since $u$ is divergence free we have
\begin{equation}\label{e:energyCts}
  \frac{1}{2} \partial_t \norm{\theta_s(t)}^2 + \nu \norm{\theta_s(t)}_1^2 = 0\,,
\end{equation}
and hence
\begin{equation}\label{e:energyStupid}
  \norm{\theta_s(t)} \leq e^{-\nu \lambda_1 (t - s)} \norm{\theta_{s,0}}\,.
\end{equation}
Our interest, again, is to to investigate how this decay rate can be quantifiably improved when the flow of~$u$ is mixing.
Similar to our treatment of pulsed diffusions, we define the \emph{dissipation time} of~$u$ by
\begin{align*}
  \tau_d &\defeq
    \sup_{s \in \R} \paren[\Big]{
      \inf \set[\Big]{ t - s \st t \geq s, \text{ and }\norm{\theta_s(t)} \leq \frac{\norm{\theta_{s,0}}}{e} \quad\text{for all } \theta_{s,0} \in L^2_0 }
    }
    \\
    &\stackrel{\scriptscriptstyle\phantom{\textup{def}}}{=}
      \sup_{s \in \R} \paren[\Big]{
	\inf \set[\Big]{ t - s \st t \geq s, \text{ and } \norm{\mathcal S_{s, t}}_{L^2_0 \to L^2_0} \leq \frac{1}{e} }
      }\,,
\end{align*}
where $\mathcal S_{s, t}$ is the solution operator to~\eqref{e:ad}.

From~\eqref{e:energyStupid} we immediately see that for any smooth divergence free advecting field~$u$ we again have
\begin{equation*}
  \tau_d \leq \frac{1}{\nu \lambda_1}\,,
\end{equation*}
where $\lambda_1$ is the smallest non-zero eigenvalue of $-\lap$ on $M$.
If the flow of~$u$ is mixing, then we expect that~$\tau_d$ to be much smaller than than $1 / (\lambda_1 \nu)$.
It turns out that all stationary vector fields for which $\nu \tau_d \to 0$ can be elegantly characterized in terms of the spectrum of the operator~$u \cdot \grad$.
Indeed, seminal work of Constantin et.\ al.~\cite{ConstantinKiselevEA08} shows%
\footnote{
  More precisely, in~\cite{ConstantinKiselevEA08} the authors show that an incompressible, time independent, vector field~$u$ is \emph{relaxation enhancing} if and only if $(u \cdot \grad)$ has no eigenfunctions in~$\dot H^1$.
  It is, however, easy to see that a vector field is relaxation enhancing if and only if~$\nu \tau_d \to 0$.
}
that for time independent incompressible vector fields~$u$, $\nu \tau_d \to 0$ if and only if the operator $(u \cdot \grad)$ has no eigenfunctions in~$\dot H^1$.
Consequently, it follows that if the flow generated by~$u$ is weakly mixing, we must have~$\nu \tau_d \to 0$ as $\nu \to 0$.

Our aim is to obtain bounds on the rate at which $\nu \tau_d \to 0$, under an assumption on the rate at which the flow of~$u$ mixes.
The analog of Definition~\ref{d:mixrate} in continuous time is as follows.

\begin{definition}\label{d:mixrateCts}
  Let $h \colon [0, \infty) \to (0, \infty)$ be a continuous, decreasing function that vanishes at $\infty$, and~$\alpha, \beta \geq 0$.
  Let $\varphi_{s, t} \colon M \to M$ be the flow map of~$u$ defined by
  \begin{equation*}
    \partial_t \varphi_{s, t} = u(\varphi_{s, t}, t)
    \qquad\text{and}\qquad
    \varphi_{s, s} = \mathrm{Id}\,.
  \end{equation*}
  \begin{enumerate}\reqnomode
    \item
      We say that the vector field~$u$ is \emph{strongly $\alpha$, $\beta$ mixing with rate function~$h$} if for all $f \in \dot H^\alpha$, $g \in \dot H^\beta$ we have
      \begin{equation}\label{e:hmixrateStrongCts}
	\abs[\big]{\ip{f \circ \varphi_{s,t}, g}}
	  \leq h\paren{t - s} \norm{f}_{\alpha}  \norm{g}_{\beta}\,.
      \end{equation}

    \item
      We say that $\varphi$ is \emph{weakly $\alpha$, $\beta$ mixing with rate function~$h$} if for all $f \in \dot H^\alpha$, $g\in \dot H^\beta$ we have
      \begin{equation}\label{e:hmixrateWeakCts}
	\paren[\Big]{\frac{1}{t - s} \int_s^t
	  \abs[\big]{\ip{f \circ \varphi_{s,r},g}}^2 \, dr}^{1/2}
	  \leq h\paren{t - s} \norm{f}_{\alpha}  \norm{g}_{\beta}\,.
      \end{equation}
  \end{enumerate}
\end{definition}

Our first result bounds the dissipation time of vector fields $u$ that are strongly $\alpha, \beta$ mixing.
\begin{theorem}\label{t:disStrongCts}
  Let $\alpha, \beta > 0$, and $h \colon [0,\infty) \to (0, \infty)$ be a decreasing function that vanishes at infinity.
  If $u$ is strongly $\alpha$, $\beta$ mixing with rate function~$h$, then
  the dissipation time is bounded by 
  \begin{equation}\label{e:taudStrongCts}
    \tau_d \leq \frac{C}{\nu H_3(\nu)}\,.
  \end{equation}
  Here $C$ is a universal constant which can be chosen to be~$18$, and $H_3\colon (0, \infty) \to (0, \infty)$ is defined by
  \begin{align}\label{e:H3}
    H_3(\mu)=\sup \set[\Big]{ \lambda \st \frac{\lambda\exp\Big(4\norm{\nabla u}_{L^\infty} h^{-1}(\frac{1}{2}\lambda^{-\frac{\alpha+\beta}{2}})\Big)}{h^{-1}(\frac{1}{2}\lambda^{-\frac{\alpha+\beta}{2}})}\leq \frac{\norm{\nabla u}_{L^\infty}^2}{2 \mu} }\,,
  \end{align}
  where $h^{-1}$ is the inverse function of $h$.
  \end{theorem}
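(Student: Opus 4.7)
The plan is to adapt the discrete-time strategy from the proof of Theorem~\ref{t:disStrong}, running a dichotomy between diffusion- and mixing-dominated decay on a single time window whose length~$\tau$ is calibrated to the mixing rate. Writing $\lambda = H_3(\nu)$ and $\tau = h^{-1}\paren[\big]{\tfrac{1}{2}\lambda^{-(\alpha+\beta)/2}}$, I would fix $\theta_{s,0} \in L^2_0$ and argue by contradiction, assuming that $\norm{\theta_s(s+\tau)} \geq e^{-1/2}\norm{\theta_{s,0}}$, and then show that the constraint in~\eqref{e:H3} becomes tight. Iterating over consecutive windows of length~$\tau$ then yields~\eqref{e:taudStrongCts}.

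The first step is to convert the no-decay hypothesis, via the energy identity~\eqref{e:energyCts}, into the integral bound $\int_s^{s+\tau} \norm{\theta_s(r)}_1^2 \, dr \leq \norm{\theta_{s,0}}^2/(2\nu)$. Chebyshev's inequality then produces a ``good time'' $s_* \in [s, s + \tau/2]$ at which $\norm{\theta_s(s_*)}_1^2 \leq \norm{\theta_{s,0}}^2/(\nu\tau)$. Decomposing $\theta_s(s_*) = \psi + \psi_h$ into low and high Laplacian frequencies at threshold~$\lambda$, the high-frequency remainder obeys $\norm{\psi_h} \leq \norm{\theta_s(s_*)}_1/\sqrt{\lambda}$, so provided $\nu\tau\lambda$ is sufficiently large---an inequality ultimately enforced by the definition of $H_3$---the solution $\theta_s(s_*)$ is essentially equal to its low-frequency projection~$\psi$, and in particular $\norm{\psi}_\alpha \leq \lambda^{\alpha/2}\norm{\theta_{s,0}}$.

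The second step is to evolve $\psi$ on $[s_*, s+\tau]$ and compare the full advection--diffusion evolution $\mathcal S_{s_*, t}\psi$ with the pure transport solution $\psi\circ\varphi_{t, s_*}$. Their difference satisfies an advection--diffusion equation forced by $\nu\lap(\psi\circ\varphi_{\cdot, s_*})$, and a Duhamel estimate combined with the flow-stretching bound $\norm{\nabla(f \circ \varphi_{t, s_*})}_{L^\infty} \leq e^{(t - s_*)\norm{\nabla u}_{L^\infty}}\norm{\nabla f}_{L^\infty}$ yields an error of size proportional to $\nu\lambda\, e^{4\tau\norm{\nabla u}_{L^\infty}}/\norm{\nabla u}_{L^\infty}^2$ times $\norm{\theta_{s,0}}$. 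Pairing the result against a test function $g \in \dot H^\beta$ and invoking~\eqref{e:hmixrateStrongCts} on the pure transport piece produces the schematic estimate
\begin{equation*}
  \abs[\big]{\ip{\theta_s(s+\tau), g}}
  \leq C\paren[\Big]{
    h(\tau)\lambda^{(\alpha+\beta)/2}
    + \frac{\nu\lambda\, e^{4\tau\norm{\nabla u}_{L^\infty}}}{\norm{\nabla u}_{L^\infty}^2}
  }\norm{\theta_{s,0}}\norm{g}_\beta \,.
\end{equation*}
The choice $\tau = h^{-1}\paren[\big]{\tfrac{1}{2}\lambda^{-(\alpha+\beta)/2}}$ makes the first summand at most $\tfrac{1}{2}\norm{\theta_{s,0}}\norm{g}_\beta$, and the constraint defining $H_3$ in~\eqref{e:H3} is precisely what forces the second summand to be bounded by a universal constant as well, contradicting the no-decay assumption.

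The main obstacle is the transport--diffusion comparison, since unlike the discrete-time setting---where each step cleanly alternates a transport with a heat step---in continuous time both mechanisms act simultaneously throughout the window. Executing the Duhamel estimate requires Gronwall together with repeated use of the flow-stretching bound, and the resulting error grows like $\exp(4\tau\norm{\nabla u}_{L^\infty})$; this is the origin of the exponential factor in~\eqref{e:H3} and of the requirement that $H_3(\nu)$ be finite. A secondary technicality is converting the dual pairing bound into an operator-norm bound on~$\mathcal S_{s, s+\tau}$, which is handled by spectrally truncating the high-frequency modes of the test function~$g$ and absorbing the loss using the same low-frequency decomposition already applied to~$\theta_s(s_*)$, exactly as in the proof of Theorem~\ref{t:disStrong}.
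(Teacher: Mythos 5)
The overall strategy (diffusion/mixing dichotomy over a window of length calibrated to~$h$, transport approximation, frequency truncation) is the right one, and the transport--diffusion comparison you sketch is essentially the paper's Lemma~\ref{l:l2diffcts}. However, there is a genuine gap at the point where you locate the time~$s_*$ via Chebyshev and then claim the high-frequency remainder is negligible.

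The issue is the assertion that $\nu\tau\lambda$ ``sufficiently large'' is ``ultimately enforced by the definition of~$H_3$.'' It is not; the opposite holds. With $\tau = h^{-1}\paren[\big]{\tfrac12\lambda^{-(\alpha+\beta)/2}}$ and $\lambda \leq H_3(\nu)$, the defining inequality in~\eqref{e:H3} reads
\begin{equation*}
  \nu\,\lambda\,\tau \;\leq\; \frac{\norm{\grad u}_{L^\infty}^2\,\tau^2}{2\,e^{4\norm{\grad u}_{L^\infty}\tau}}\,,
\end{equation*}
and the right side is uniformly bounded above (by $1/(8e^2)$, attained at $\tau = 1/(2\norm{\grad u}_{L^\infty})$) and tends to~$0$ as $\tau \to \infty$. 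So $\nu\tau\lambda < 1$ always, and typically $\nu\tau\lambda \ll 1$. Consequently your Chebyshev output $\norm{\theta_s(s_*)}_1^2 \lesssim \norm{\theta_{s,0}}^2/(\nu\tau)$ is \emph{weaker} than the bound you need, namely $\norm{\theta_s(s_*)}_1^2 \lesssim \lambda\norm{\theta_{s,0}}^2$, because $1/(\nu\tau) > \lambda$. The high-frequency tail $\norm{\psi_h} \leq \norm{\theta_s(s_*)}_1/\sqrt{\lambda} \lesssim \norm{\theta_{s,0}}/\sqrt{\nu\tau\lambda}$ is therefore \emph{not} small; it can be as large as the full norm, and your argument then fails to produce a contradiction at the claimed threshold. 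Raising the truncation threshold to compensate forces a larger $\lambda^{(\alpha+\beta)/2}$ loss in the mixing estimate, which is precisely the wrong direction.

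The paper avoids this by not using Chebyshev at all: it runs a dichotomy directly on the ratio $\norm{\theta_s(t)}_1^2/\norm{\theta_s(t)}^2$. Either this ratio stays $\geq \lambda_N$ throughout a window, giving the exponential decay~\eqref{e:H1Large}, or there is a first time where it drops strictly below $\lambda_N$; restarting Lemma~\ref{l:H1ByL2SmallCts} from that time, one has exactly $\norm{\theta_{s,0}}_1^2 < \lambda_N \norm{\theta_{s,0}}^2$, which (via interpolation $\norm{\theta_{s,0}}_\alpha^2 \leq \norm{\theta_{s,0}}^{2-2\alpha}\norm{\theta_{s,0}}_1^{2\alpha} \leq \lambda_N^\alpha\norm{\theta_{s,0}}^2$) feeds directly into the mixing estimate without any frequency decomposition of the initial data. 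In fact the paper never decomposes $\theta_{s,0}$ into low and high frequencies, and never needs the high-frequency piece to be small; it bounds $\int\norm{\theta_s(r)}_1^2 \,dr$ from below by comparing against the transported solution and subtracting $\norm{P_N\phi_s(r)}^2$, which is controlled by mixing alone. Replacing your Chebyshev step with this dichotomy (and dispensing with the low/high decomposition of $\theta_s(s_*)$) would repair the argument.
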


  As before, we now compute $H_3$ explicitly for polynomial, and exponential rate functions.
  
  \begin{corollary}\label{c:SMixExp}
    Let~$\alpha, \beta, u, h$ be as in Theorem~\ref{t:disStrongCts}.
    \begin{enumerate}\reqnomode
      \item If the mixing rate function~$h$ is the power law~\eqref{e:powerh}, then
	\begin{equation}\label{e:SMixExpPower}
	  \tau_d \leq \frac{C}{\nu \abs{\ln \nu}^\delta}\,,
	  \qquad\text{where}\quad
	  \delta \defeq \frac{2p}{\alpha+\beta}\,,
	\end{equation}
	and $C = C(\alpha, \beta, c, \norm{\grad u}_{L^\infty})$ is a finite constant.
      \item If the mixing rate function~$h$ is the exponential~\eqref{e:exp}, then
	\begin{equation}\label{e:SMixExpExp}
	  \tau_d \leq \frac{C}{\nu^\delta}\,,
	  \qquad\text{where}\qquad
	  \delta \defeq \frac{2(\alpha+\beta)\norm{\nabla u}_{L^\infty}}{c_2+2(\alpha+\beta)\norm{\nabla u}_{L^\infty}}\,,
	\end{equation}
	and $C = C(\alpha, \beta, c_1,c_2, \norm{\grad u}_{L^\infty})$ is a finite constant.
    \end{enumerate}
  \end{corollary}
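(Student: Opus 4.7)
The plan is to simply compute the function $H_3$ appearing in Theorem~\ref{t:disStrongCts} for each of the two rate functions, and then read off the bound on $\tau_d$ from \eqref{e:taudStrongCts}. The whole argument reduces to asymptotic analysis of the implicit inequality defining $H_3(\mu)$, so no new ideas are required beyond careful bookkeeping.

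For the power law \eqref{e:powerh}, the inverse is $h^{-1}(y)=(c/y)^{1/p}$, so $h^{-1}(\tfrac{1}{2}\lambda^{-(\alpha+\beta)/2})=(2c)^{1/p}\lambda^{(\alpha+\beta)/(2p)}$. Substituting into the condition inside the supremum in~\eqref{e:H3}, the exponential factor $\exp(4\norm{\grad u}_{L^\infty} h^{-1}(\cdots))$ dominates, and after absorbing constants the constraint becomes, up to lower-order terms,
\begin{equation*}
  A\lambda^{(\alpha+\beta)/(2p)} \lesssim \abs{\ln \mu}
\end{equation*}
for a constant $A$ depending only on $\alpha,\beta,c,p,\norm{\grad u}_{L^\infty}$. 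Solving this for $\lambda$ gives $H_3(\mu)\gtrsim \abs{\ln\mu}^{2p/(\alpha+\beta)}$ as $\mu\to 0$, and inserting this into $\tau_d \leq C/(\nu H_3(\nu))$ yields \eqref{e:SMixExpPower} with $\delta=2p/(\alpha+\beta)$.

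For the exponential rate \eqref{e:exp} the inverse is $h^{-1}(y)=c_2^{-1}\ln(c_1/y)$, and hence
\begin{equation*}
  h^{-1}\paren[\Big]{\tfrac{1}{2}\lambda^{-(\alpha+\beta)/2}}
    = \frac{1}{c_2}\ln(2c_1) + \frac{\alpha+\beta}{2c_2}\ln\lambda\,.
\end{equation*}
Consequently the exponential appearing in \eqref{e:H3} becomes a polynomial in $\lambda$, explicitly of order $\lambda^{K}$ with $K = 2(\alpha+\beta)\norm{\grad u}_{L^\infty}/c_2$, while the denominator $h^{-1}(\cdots)$ is logarithmic in $\lambda$ and may be absorbed into the constant. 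The constraint defining $H_3$ then reduces (modulo logarithmic factors) to $\lambda^{1+K}\lesssim 1/\mu$, so $H_3(\mu)\gtrsim \mu^{-1/(1+K)}$. Plugging into \eqref{e:taudStrongCts} gives $\tau_d\leq C\nu^{-1+1/(1+K)} = C\nu^{-\delta}$ with $\delta = K/(1+K)$, which upon expansion is exactly the exponent claimed in \eqref{e:SMixExpExp}.

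There is no genuine obstacle here; the only minor subtlety is that the denominator $h^{-1}(\cdots)$ in \eqref{e:H3} is logarithmic in $\lambda$ in the exponential case, so one has to verify that swallowing it into the constant is legitimate (it is, since $\ln\lambda$ contributes only a polylogarithmic correction that can be bounded by $\lambda^{\epsilon}$ for any $\epsilon>0$ and then reabsorbed by slightly adjusting $\delta$, or more cleanly by noting that both sides of the constraint depend monotonically on $\lambda$ and a direct algebraic solve works). The implicit dependence of the constants $C$ on the parameters of the problem is tracked explicitly at each step.
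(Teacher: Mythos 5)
Your argument is correct and is essentially the same computation as the paper's: solve the implicit constraint defining $H_3$ in~\eqref{e:H3} asymptotically for each rate function and substitute into~\eqref{e:taudStrongCts}. One small refinement to your side remark on the exponential case: dropping the denominator $h^{-1}(\cdots) \sim \tfrac{\alpha+\beta}{2c_2}\ln\lambda$ actually makes the constraint in~\eqref{e:H3} \emph{more} restrictive (it sits in the denominator of the left-hand side), so it only shrinks the supremum and hence yields a \emph{lower} bound on $H_3$ — exactly what one needs for an upper bound on $\tau_d$ — with no $\epsilon$-adjustment of $\delta$ required.
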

  \begin{remark}
    The cases considered in Corollary~\ref{c:SMixExp} were also recently studied by Coti Zelati, Delgadino and Elgindi~\cites{CotiZelatiDelgadinoEA18}.
    Here the authors show that if the mixing rate is given by the power law~\eqref{e:powerh}, then the dissipation time is bounded by
    \begin{equation*}%\label{e:tarek1}
      \tau_d \leq \frac{C}{\nu^\delta}\,,
      \qquad\text{where}\quad
      \delta = \frac{\alpha+\beta}{\alpha+\beta + p}\,.
    \end{equation*}
    Alternately, if the mixing rate is the exponential~\eqref{e:exp}, then~\cite{CotiZelatiDelgadinoEA18} show that the dissipation time is bounded by
    \begin{equation*}%\label{e:tarek2}
      \tau_d \leq C\abs{\ln \nu}^2\,.
    \end{equation*}
    In both these cases, the bounds provided by~\cite{CotiZelatiDelgadinoEA18} are stronger than those provided by Corollary~\ref{c:SMixExp}.
  \end{remark}

Next we bound the dissipation time for weakly mixing flows.

\begin{theorem}\label{t:disWeakCts}
  %\GI[2018-03-31]{TODO: Check if it works for $\alpha$, $\beta$ mixing for other $\alpha, \beta \in (0, 1)$. Values of $\alpha, \beta > 1$ are not important.}
  Let $\alpha, \beta > 0$, and $h \colon [0,\infty) \to (0, \infty)$ be a decreasing function that vanishes at infinity.
  If $u$ is strongly $\alpha$, $\beta$ mixing with rate function~$h$, then
  the dissipation time is bounded by 
  \begin{equation}\label{e:taudStrongCtsWeak}
    \tau_d \leq  \frac{C}{\nu H_4(\nu)} \,.
  \end{equation}
  Here $C$ is a universal constant which can be chosen to be~$18$, and $H_4\colon (0, \infty) \to (0, \infty)$ is defined by
  \begin{align}\label{e:H4}
    H_4(\mu)=\sup \set[\Big]{ \lambda \st \frac{\lambda\exp\Big(4\norm{\nabla u}_{L^\infty} h^{-1}(\frac{1}{2\sqrt{\tilde c}}\lambda^{-\sfrac{(d+2\alpha+2\beta)}{4}})\Big)}{h^{-1}(\frac{1}{2\sqrt{\tilde c}}\lambda^{-\sfrac{(d+2\alpha+2\beta)}{4}})}\leq \frac{\norm{\nabla u}_{L^\infty}^2}{2\mu} }\,,
  \end{align}
  where $h^{-1}$ is the inverse function of $h$ and $\tilde c = \tilde c(M) > 0$ is the same constant as in Theorem~\ref{t:disWeak} and Remark~\ref{r:ctilde}.
  \end{theorem}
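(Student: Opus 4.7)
The plan is to mimic the architecture of the proof of Theorem~\ref{t:disStrongCts} (continuous time, strong mixing), replacing the pointwise-in-time mixing estimate by the Ces\`aro (weak-mixing) estimate via the same device used in the proof of Theorem~\ref{t:disWeak}. Starting from the energy identity~\eqref{e:energyCts}, the target is to show that for $T = C/(\nu H_4(\nu))$ one has $\norm{\theta_s(s+T)} \leq \norm{\theta_{s,0}}/e$. Introduce $P_\lambda$, the spectral projector of $-\lap$ onto eigenvalues $\leq \lambda$, and $Q_\lambda = I - P_\lambda$. The high modes are damped by diffusion at rate $e^{-\nu\lambda t}$, so the crux is bounding $P_\lambda\theta_s$ using mixing.

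On a short subinterval $[s,s+\tau]$, compare $\theta_s(r)$ with the pure-transport approximation $\tilde\theta(r) = \theta_s(s)\circ\varphi_{s,r}^{-1}$ via Duhamel's formula; the $H^\alpha$ error is controlled using the flow growth estimate $\norm{f\circ\varphi_{s,r}}_\alpha \leq e^{\alpha\norm{\nabla u}_{L^\infty}(r-s)}\norm{f}_\alpha$, and this is exactly what produces the $\exp(4\norm{\nabla u}_{L^\infty}\tau)$ factor in~\eqref{e:H4}. For each eigenfunction $e_j$ with $\lambda_j \leq \lambda$, rewriting $\ip{\tilde\theta(r),e_j} = \ip{\theta_s(s),\,e_j\circ\varphi_{s,r}}$ (by volume preservation) and applying Definition~\ref{d:mixrateCts}(ii) with $f = e_j$, $g = \theta_s(s)$ yields
\[
  \paren[\Big]{\frac{1}{\tau}\int_s^{s+\tau}\abs[\big]{\ip{\tilde\theta(r),e_j}}^2\,dr}^{1/2}
    \leq h(\tau)\,\norm{e_j}_\alpha\,\norm{\theta_s(s)}_\beta\,.
\]

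Summing this over the $N(\lambda) \leq \tilde c\,\lambda^{d/2}$ modes with $\lambda_j \leq \lambda$ (by Weyl's law, Remark~\ref{r:ctilde}), using $\norm{e_j}_\alpha^2 \leq \lambda^\alpha$, and invoking the preliminary reduction $\norm{\theta_s(s)}_\beta^2 \leq \lambda^\beta \norm{\theta_s(s)}^2$ (justified by a further splitting to handle the $Q_\lambda$ component) gives
\[
  \frac{1}{\tau}\int_s^{s+\tau}\norm{P_\lambda\tilde\theta(r)}^2\,dr
    \leq \tilde c\,\lambda^{(d+2\alpha+2\beta)/2}\,h(\tau)^2\,\norm{\theta_s(s)}^2\,.
\]
Forcing the right-hand side to be $\leq \tfrac{1}{4}\norm{\theta_s(s)}^2$ demands $h(\tau) \leq \frac{1}{2\sqrt{\tilde c}}\lambda^{-(d+2\alpha+2\beta)/4}$, so the natural choice is $\tau = h^{-1}\paren{\tfrac{1}{2\sqrt{\tilde c}}\lambda^{-(d+2\alpha+2\beta)/4}}$, which is precisely the threshold appearing inside~\eqref{e:H4}.

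Combining the Ces\`aro low-mode bound with the high-mode diffusive decay and the Duhamel error $\exp(4\norm{\nabla u}_{L^\infty}\tau)$, and then optimizing over $\lambda$, yields~\eqref{e:taudStrongCtsWeak} with the universal constant $C = 18$ inherited directly from Theorem~\ref{t:disStrongCts}. The main obstacle is coupling the time-averaged low-mode control to the pointwise-in-time energy balance: since weak mixing only controls a Ces\`aro average, one must use a Chebyshev-type argument to locate a time in $[s,s+\tau]$ at which $\norm{P_\lambda\theta_s}$ is actually small pointwise, while keeping the Duhamel error uniform across the whole window. This mirrors the iteration in the proof of Theorem~\ref{t:disWeak}, and the extra $\sqrt{\tilde c}$ and the exponent $(d+2\alpha+2\beta)/4$ in~\eqref{e:H4}, relative to~\eqref{e:H3}, are the combined cost of the Weyl mode count and this averaging procedure.
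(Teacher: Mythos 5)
Your overall architecture is the correct one, and you have the right understanding of where all the ingredients in~\eqref{e:H4} come from: the Weyl count $N(\lambda)\lesssim\tilde c\,\lambda^{d/2}$ produces the $\sqrt{\tilde c}$ and shifts the exponent from $(\alpha+\beta)/2$ to $(d+2\alpha+2\beta)/4$; the transport comparison generates the $\exp(4\norm{\nabla u}_{L^\infty}\cdot)$ factor; and the threshold $\tau = h^{-1}\bigl(\tfrac{1}{2\sqrt{\tilde c}}\lambda^{-(d+2\alpha+2\beta)/4}\bigr)$ is exactly what makes the low-mode contribution a small fraction. These all match the paper's Lemma~\ref{l:ctsWeakH1small}.

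However, the step you flag as "the main obstacle" is a misconception, and the Chebyshev argument you propose to patch it is unnecessary and not what the paper does. The energy balance that drives the proof is the \emph{integrated} identity~\eqref{e:energyCts2},
\begin{equation*}
  \norm{\theta_s(t_0)}^2
    = \norm{\theta_{s,0}}^2
      - 2\nu\int_s^{t_0}\norm{\theta_s(r)}_1^2\,dr\,,
\end{equation*}
so what one must bound from below is $\int_s^{t_0}\norm{\theta_s(r)}_1^2\,dr$, a time integral. That integral dominates $\lambda_N\int\norm{(I-P_N)\theta_s(r)}^2\,dr$, and after splitting $\theta_s = \phi_s + (\theta_s-\phi_s)$ the low-mode piece to be controlled is $\int\norm{P_N\phi_s(r)}^2\,dr$ — already a time average — which is precisely what the Ces\`aro mixing hypothesis~\eqref{e:hmixrateWeakCts} bounds. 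There is no pointwise-in-time step to couple to and no need to locate a "good time" in the window. The same structure appears in the discrete case: Lemma~\ref{l:H1ByL2SmallWeak} works with $\sum_m\Enu\theta_m$, again a summed quantity. If you instead try to extract a good pointwise time by Chebyshev, you will lose a constant and, more importantly, you cannot then feed that single-time bound back into the energy identity over the whole window — you would be throwing away the contribution from the rest of the interval and would not recover~\eqref{e:lowerbound}. So: drop the Chebyshev step, use the integrated energy balance as the paper does, and the rest of your proposal goes through.

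One small inaccuracy worth noting: in the application of~\eqref{e:hmixrateWeakCts} you take $f=e_j$, $g=\theta_s(s)$, whereas the paper's~\eqref{e:tmpCtsWeak1} takes $f=\theta_{s,0}$, $g=e_l$ (so $\norm{e_l}_\beta=\lambda_l^{\beta/2}$). The final exponent $\alpha+\beta$ is symmetric so the conclusion is unaffected, but the preliminary reduction you invoke, $\norm{\theta_s(s)}_\beta^2\leq\lambda^\beta\norm{\theta_s(s)}^2$, should really be the interpolation bound $\norm{\theta_{s,0}}_\alpha^2\leq\norm{\theta_{s,0}}^{2-2\alpha}\norm{\theta_{s,0}}_1^{2\alpha}\leq\lambda_N^\alpha\norm{\theta_{s,0}}^2$ coming from the hypothesis $\norm{\theta_{s,0}}_1^2<\lambda_N\norm{\theta_{s,0}}^2$ of Lemma~\ref{l:ctsWeakH1small}, not an independent spectral truncation.
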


As before, we compute the above dissipation time bound explicitly when the mixing rate function decays polynomially.

\begin{corollary}\label{c:ctsweak}
  Suppose~$u$ is weakly $\alpha$, $\beta$ mixing with rate function~$h$, where $\alpha, \beta>0$, and h is power law~\eqref{e:powerh}.
  Then the dissipation time is bounded by
  \begin{equation}\label{e:ctsweak}
    \tau_d \leq \frac{C}{\nu \abs{\ln\nu}^{\delta}}\,,
    \qquad\text{where}\quad
    \delta = \frac{4p}{d+2\alpha+2\beta}\,,
  \end{equation}
  and $C = C(c,\tilde c,\alpha, \beta, \norm{\grad u}_{L^\infty})$ is some finite constant.
\end{corollary}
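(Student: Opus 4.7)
The plan is to apply Theorem~\ref{t:disWeakCts} directly, with the work being a routine asymptotic analysis of the implicit definition of $H_4(\nu)$ once $h(t) = c/t^p$ is substituted. Setting $\gamma \defeq (d+2\alpha+2\beta)/4$, the inverse of the rate function is $h^{-1}(y) = (c/y)^{1/p}$, so
\begin{equation*}
  h^{-1}\paren[\Big]{\frac{1}{2\sqrt{\tilde c}}\lambda^{-\gamma}}
  = (2\sqrt{\tilde c}\, c)^{1/p}\, \lambda^{\gamma/p}.
\end{equation*}
Substituting this into~\eqref{e:H4}, the defining inequality for $H_4(\nu)$ reduces to
\begin{equation*}
  \lambda^{1 - \gamma/p} \exp\paren[\big]{A\, \lambda^{\gamma/p}}
    \leq \frac{B}{\nu},
\end{equation*}
where $A \defeq 4\norm{\grad u}_{L^\infty} (2\sqrt{\tilde c}\, c)^{1/p}$ and $B \defeq \tfrac{1}{2}\norm{\grad u}_{L^\infty}^2 (2\sqrt{\tilde c}\, c)^{1/p}$ are constants depending only on $c,\tilde c, \alpha, \beta, \norm{\grad u}_{L^\infty}$.

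Next I would take logarithms and isolate the dominant term. The inequality becomes
\begin{equation*}
  A\, \lambda^{\gamma/p} + \paren[\big]{1 - \tfrac{\gamma}{p}} \ln \lambda
    \leq \ln\paren[\Big]{\frac{B}{\nu}}\,.
\end{equation*}
As $\nu \to 0^+$, the right-hand side tends to $+\infty$, forcing $\lambda \to \infty$ and making the exponent $\lambda^{\gamma/p}$ dominate any polynomial correction $\ln \lambda$. Consequently the largest $\lambda$ satisfying this inequality behaves asymptotically like
\begin{equation*}
  \lambda \sim \paren[\Big]{\frac{\abs{\ln \nu}}{A}}^{p/\gamma}
  \quad\text{as } \nu \to 0\,,
\end{equation*}
which in particular gives a lower bound $H_4(\nu) \geq c_0 \abs{\ln \nu}^{p/\gamma}$ for $\nu$ sufficiently small and some constant $c_0 = c_0(c,\tilde c,\alpha,\beta,\norm{\grad u}_{L^\infty}) > 0$.

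Plugging this lower bound back into~\eqref{e:taudStrongCtsWeak} yields
\begin{equation*}
  \tau_d \leq \frac{C}{\nu H_4(\nu)}
    \leq \frac{C}{\nu \abs{\ln \nu}^{p/\gamma}}
    = \frac{C}{\nu \abs{\ln \nu}^{\delta}}\,,
\end{equation*}
since $p/\gamma = 4p/(d+2\alpha+2\beta) = \delta$; absorbing constants and adjusting for any bounded range of $\nu$ (where the bound is trivial) completes the proof. The only mild subtlety is making the informal ``exponential dominates polynomial'' step rigorous: this is handled by fixing $\lambda_* = (\frac{1}{2A}\abs{\ln \nu})^{p/\gamma}$ and verifying directly that $\lambda_*$ satisfies the inequality in the definition of $H_4(\nu)$ for all sufficiently small $\nu$, which gives $H_4(\nu) \geq \lambda_*$ without needing a sharp asymptotic. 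No other obstacle is anticipated.
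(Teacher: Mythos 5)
Your proposal is correct and follows essentially the same route as the paper: substitute $h^{-1}(y)=(c/y)^{1/p}$ into the definition of $H_4$ from~\eqref{e:H4}, take logarithms, observe that the $\lambda^{\gamma/p}$ term dominates, and conclude $H_4(\nu)\gtrsim\abs{\ln\nu}^{p/\gamma}$. If anything, your explicit verification at the test point $\lambda_*=(\tfrac{1}{2A}\abs{\ln\nu})^{p/\gamma}$ is a cleaner way to make the ``exponential dominates polynomial'' asymptotic rigorous than the paper's somewhat terse ``taking the logarithm shows''.
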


\begin{remark}[Comparison with pulsed diffusions]
In continuous time, the estimate on the dissipation time~\eqref{e:taudStrongCts} is \emph{weaker} than that of a pulsed diffusion, with the same mixing rate function.
In particular, if $h$ decays algebraically, then $\nu \tau_d$ decays algebraically for pulsed diffusions (as in Corollary~\ref{c:strong}) but only logarithmically (as in Corollary~\ref{c:SMixExp}) for the advection diffusion equation.
The reason our method yields a stronger results for pulsed diffusions is because because pulsed diffusions are better approximated by the underlying dynamical system than solutions to~\eqref{e:ad} are.
Thus when studying pulsed diffusions one is able to better use the mixing properties of the underlying dynamical system.
\end{remark}

\begin{remark}[Shear Flows]
  In the particular case of shear flows a stronger estimate on the dissipation time can be obtained using Theorem~1.1 in~\cite{BedrossianCotiZelati17}.
  Namely let $u = u(y)$ be a smooth shear flow on the $2$-dimensional torus with non-degenerate critical points, and let $L^2_0$ denote the space of all functions whose horizontal average is~$0$.
  Now Theorem~1.1 in~\cite{BedrossianCotiZelati17} guarantees that the dissipation time is bounded by
  \begin{equation}\label{e:shear}
    \tau_d \leq C \frac{\abs{\ln \nu}^2}{\nu^{1/2}}\,,
  \end{equation}
  for some constant $C > 0$.

  To place this in the context of our results, we restrict our attention to $L^2_0$ functions on~$\T^2$ whose horizontal averages are all~$0$.
  On this space, the method of stationary phase can be used to show that the flow generated by~$u$ is strongly $1$, $1$ mixing with rate function $h(t) = C t^{-1 / 2}$ (see equation~(1.8) in~\cite{BedrossianCotiZelati17}).
  Consequently, by Corollary~\ref{c:SMixExp} guarantees that the dissipation time is bounded by 
  \begin{equation*}
    \tau_d \leq \frac{C}{\nu \abs{\ln \nu}^\delta}\,,
    \qquad\text{where}\quad
    \delta =\frac{2p}{\alpha+\beta}.
  \end{equation*}
  This, however, is weaker than~\eqref{e:shear}.
\end{remark}

\begin{remark}[Optimality]\label{r:optimality}
%In terms of optimality,
We recall that Poon~\cite{Poon96} (see also~\cite[eq. 9]{MilesDoering18}) showed the double exponential \emph{lower bound}
\begin{equation}\label{e:dexpLowerCts}
  \norm{\theta_s(t)} \geq \exp\paren[\Big]{
    -\frac{\nu C \norm{u}_{C^1} \norm{\theta_0}_1^2}{\norm{\theta_0}^2}
    \gamma^{t - s}
  } \norm{\theta_{s,0}} \,,
\end{equation}
for some constants~$C > 0$ and $\gamma > 1$.
To the best of our knowledge, there are no incompressible smooth divergence free vector fields for which the lower bound~\eqref{e:dexpLowerCts} is attained.
%Similar to the case of pulsed diffusions, we conjecture that~\eqref{e:dexpLowerCts} is attained whenever the flow generated by~$u$ is exponentially mixing.
Moreover, on the torus, recent work of Miles and Doering~\cite{MilesDoering18} suggests that the Batchelor length scale may limit the long term effectiveness of mixing forcing only a single-exponential energy decay.
%We are, however, presently unable to prove or disprove this.

As with the case of pulsed diffusions Remark~\ref{r:taudLog}, the lower bound~\eqref{e:dexpLowerCts} implies that the dissipation time is again bounded below by $O(\abs{\ln \nu})$ as in~\eqref{e:taudLogLower}.
The upper bounds currently available are either algebraic (Corollary~\ref{c:SMixExp}), or $O(\abs{\ln \nu}^2)$ (as in~\cite{CotiZelatiDelgadinoEA18}).
Thus there is a gap between the currently available upper and lower bounds on the dissipation time.
Moreover, while we are able to exhibit pulsed diffusions that have a logarithmic dissipation time (Theorem~\ref{t:energydecay} and Remark~\ref{r:taudLog}), we do not know examples of smooth flows whose dissipation time is $O(\abs{\ln \nu})$.
\end{remark}
\medskip

Finally, we turn our attention to studying the principal eigenvalue of the operator~$-\nu \lap + u \cdot \grad$ in a bounded domain~$\Omega$ with Dirichlet boundary conditions.
In this case, in addition to~$u$ being smooth and divergence free, we also assume~$u$ is time independent and tangential on the boundary (i.e.~$u \cdot \hat n = 0$ on $\partial \Omega$, where $\hat n$ denotes the outward pointing unit normal).
Let $\mu_0(\nu, u)$ denote the principal eigenvalue of $-\nu \lap + u \cdot \grad$ with homogeneous Dirichlet boundary conditions on~$\partial \Omega$.

By Rayleigh's principle we note
\begin{equation*}
  \mu_0(\nu, u) \geq \mu_0( \nu, 0 ) = \nu \mu_0(1, 0)
\end{equation*}
where $\mu_0(1, 0)$ is the principal eigenvalue of the Laplacian.
Our interest is in understanding the behaviour of $\mu_0(\nu, u) / \nu$ as $\nu \to 0$.
Berestycki et.\ al.~\cite{BerestyckiHamelEA05} showed that $\mu_0(\nu, u) / \nu \to \infty$ if and only if $u \cdot \grad$ has no first integrals in $H^1_0$.
That is, $\mu_0(\nu, u) / \nu \to \infty$ if and only if there does not exist $w \in H^1_0(\Omega)$ such that $u \cdot \grad w = 0$.

In general it does not appear to be possible to obtain a rate at which $\mu_0(\nu, u) / \nu \to \infty$.
If, however, the flow generated by~$u$ is sufficiently mixing then we obtain a rate at which $\mu_0(\nu, u) / \nu \to \infty$ in terms of the mixing rate of~$u$.
This is our next result.
\begin{proposition}\label{p:eval}
  If~$u$ is a smooth, time independent, incompressible vector field which is tangential on~$\partial \Omega$, then
  \begin{equation}\label{e:lamda0taud}
    \frac{\mu_0(\nu, u)}{\nu} \geq \frac{1}{\nu \tau_d}\,.
  \end{equation}
\end{proposition}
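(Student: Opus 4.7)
The plan is to apply the definition of the dissipation time directly to the principal eigenfunction of $L \defeq -\nu\lap + u \cdot \grad$. The principal eigenfunction decays under the semigroup $S_t = e^{-tL}$ at exactly the rate $\mu_0$, while the dissipation time forces at least this rate of decay for every initial datum, so comparing the two inequalities forces $\mu_0 \geq 1/\tau_d$, which is exactly~\eqref{e:lamda0taud} after dividing by $\nu$.

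First I would set up the spectral picture. Since $u$ is smooth, incompressible and tangential on~$\partial\Omega$, the operator $L$ with homogeneous Dirichlet boundary conditions has compact resolvent on $L^2(\Omega)$, and the solution semigroup of the advection--diffusion equation is positivity preserving by the parabolic maximum principle. Together with the Krein--Rutman theorem this yields the real, simple principal eigenvalue $\mu_0(\nu,u)$ studied in~\cite{BerestyckiHamelEA05}, and a strictly positive eigenfunction $\phi_0 \in H^1_0(\Omega) \subset L^2(\Omega)$ satisfying $L\phi_0 = \mu_0 \phi_0$. Differentiating shows that
\begin{equation*}
  S_t \phi_0 = e^{-\mu_0 t}\,\phi_0
  \qquad\text{for every } t \geq 0\,.
\end{equation*}

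Second, I would interpret the dissipation time in this Dirichlet setting as the natural analog of Section~\ref{s:ad}, namely $\tau_d = \inf\set{t > 0 \st \norm{S_t}_{L^2\to L^2} \leq 1/e}$. Applying this operator-norm bound to the admissible function $\phi_0$ gives
\begin{equation*}
  e^{-\mu_0 \tau_d}\norm{\phi_0}
    = \norm{S_{\tau_d}\phi_0}
    \leq \frac{\norm{\phi_0}}{e}\,,
\end{equation*}
so $\mu_0 \tau_d \geq 1$, and dividing by $\nu$ yields~\eqref{e:lamda0taud}.

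No step presents any real difficulty: the hard work is already encoded in the dissipation time bound, and the only point requiring attention is citing the classical Krein--Rutman / parabolic maximum principle theory needed to produce a real principal eigenvalue with an $L^2$ eigenfunction on which the semigroup norm bound can be tested. Once this is in place, the proof reduces to the one-line comparison above.
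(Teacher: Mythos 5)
Your proof is correct and follows exactly the paper's approach: solve the advection--diffusion equation with the principal eigenfunction $\phi_0$ as initial data, observe the solution decays as $e^{-\mu_0 t}$, and test this against the definition of the dissipation time to get $\mu_0\tau_d\geq 1$. The only difference is that you spell out the Krein--Rutman / maximum-principle background guaranteeing a real principal eigenvalue and positive $L^2$ eigenfunction, which the paper (citing~\cite{BerestyckiHamelEA05}) takes as given.
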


Proposition~\ref{p:eval} follows immediately by solving the advection diffusion equation with the principal eigenfunction as the initial data.
For completeness we present the proof in Section~\ref{s:peval}.

  Now we note the proof of Theorems~\ref{t:disStrongCts}, \ref{t:disWeakCts} only use the spectral decomposition of the Laplacian, and are unaffected by the presence of spatial boundaries.
  Thus Theorems~\ref{t:disStrongCts} and~\ref{t:disWeakCts} still apply in this context.
  Consequently, if~$u$ is known to be (strongly, or weakly) mixing at a particular rate, then $\mu_0(\nu, u) / \nu$ must diverge to infinity, and the growth rate can be obtained by using~\eqref{e:lamda0taud} and Theorems~\ref{t:disStrongCts},~\ref{t:disWeakCts}, or Corollaries~\ref{c:SMixExp}, \ref{c:ctsweak} as appropriate.

For example, if~$\alpha, \beta > 0$ and~$u$ is strongly $\alpha$, $\beta$ mixing with the exponentially decaying rate function~\eqref{e:exp}, then
\begin{equation}\label{e:lambdaLowerBound}
  \frac{\mu_0(\nu, u)}{\nu} \geq \frac{1}{C\nu^\gamma}\,,
  \qquad\text{where}\qquad
  \gamma = \frac{c_2}{c_2+2(\alpha+\beta)\norm{\nabla u}_{L^\infty}} \,,
\end{equation}
and $C = C(\alpha, \beta, h)$ is a finite constant.
Using~\cite{CotiZelatiDelgadinoEA18}, this can be improved to the bound
\begin{equation*}
  \frac{\mu_0( \nu, u)}{\nu} \geq \frac{1}{C \nu \abs{\ln \nu}^2 }\,.
\end{equation*}
We remark, however, that in view of Remark~\ref{r:optimality} and~\eqref{e:lamda0taud}, we expect that if~$u$ that generates an exponentially mixing flow, then one should have
\begin{equation*}
  \frac{\mu_0(\nu, u)}{\nu} \geq \frac{1}{C \nu \abs{\ln \nu}}\,.
\end{equation*}
We are, however, presently unable to prove this stronger bound.

The rest of this paper is devoted to the proofs of the main results.
A brief plan can be found at the end of Section~\ref{s:intro}.

\section{Dissipation Enhancement for Pulsed Diffusions.}\label{s:discrete}

In this section we prove Theorems~\ref{t:disStrong} and~\ref{t:disWeak}.
The main idea behind the proof is to split the analysis into two cases.
In the first case, we assume $\norm{\theta_n}_1 / \norm{\theta_n}$ is large, and obtain decay of~$\norm{\theta_n}$ using the energy inequality.
In the second case, $\norm{\theta_n}_1 / \norm{\theta_n}$ is small, and hence the dynamics are well approximated by that of the underlying dynamical system.
The mixing assumption now forces the generation of high frequencies, and the rapid dissipation of these gives an enhanced decay of $\norm{\theta_n}$.

\subsection{The Strongly Mixing Case.}
We begin by stating two lemmas handling each of the cases stated above.

\begin{lemma}\label{l:H1ByL2Large}
  Given $\theta \in L^2_0$, define $\Enu \theta$ by
  \begin{equation}\label{e:EnuDef}
    \Enu \theta \defeq \frac{1}{\nu} \norm[\big]{ \paren{1 - e^{2 \nu \lap}}^{1/2} U \theta}^2\,.
  \end{equation}
  If for $\theta_0 \in L^2_0$ and $c_0 > 0$ we have
  \begin{equation}\label{e:DnuLower}
    \Enu \theta_0
      \geq c_0 \norm{\theta_0}^2\,,
  \end{equation}
  then
  \begin{equation*}
    \norm{\theta_{1}}^2 \leq e^{-\nu c_0} \norm{\theta_0}^2\,.
  \end{equation*}
\end{lemma}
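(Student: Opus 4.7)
The plan is to derive the energy identity
\[
  \|\theta_1\|^2 = \|\theta_0\|^2 - \nu \Enu \theta_0,
\]
which reduces the lemma to an elementary inequality.

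First I would use $\theta_1 = e^{\nu\lap}U\theta_0$ together with the fact that $e^{\nu\lap}$ is self-adjoint on $L^2_0$ to write
\[
  \|\theta_1\|^2 = \ip{e^{2\nu\lap}U\theta_0, U\theta_0}.
\]
Since $U$ is unitary (as $\varphi$ is volume preserving), $\|U\theta_0\|^2 = \|\theta_0\|^2$, so adding and subtracting this term gives
\[
  \|\theta_1\|^2
  = \|\theta_0\|^2 - \ip{(1-e^{2\nu\lap})U\theta_0, U\theta_0}.
\]
The operator $1 - e^{2\nu\lap}$ is non-negative and self-adjoint on $L^2_0$ (by the spectral theorem applied to $-\lap$, whose eigenvalues on $L^2_0$ are strictly positive), so it admits a self-adjoint square root. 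Therefore the inner product above equals $\|(1-e^{2\nu\lap})^{1/2}U\theta_0\|^2 = \nu\Enu\theta_0$, giving the identity I wanted.

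Given the hypothesis $\Enu\theta_0 \geq c_0\|\theta_0\|^2$, I then obtain
\[
  \|\theta_1\|^2 \leq (1 - \nu c_0)\|\theta_0\|^2,
\]
and finish by applying the elementary bound $1 - x \leq e^{-x}$ valid for $x \geq 0$. One small sanity check worth noting is that $\nu c_0 \leq 1$ automatically: the spectral representation shows $\Enu\theta_0 \leq \|\theta_0\|^2/\nu$, so the intermediate quantity $1 - \nu c_0$ is never negative and the argument is consistent. There is no real obstacle here — the entire content of the lemma is packaging the one-step energy identity in a form that will be convenient when iterated in the proof of Theorem~\ref{t:disStrong}.
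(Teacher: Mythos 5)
Your proof is correct and follows essentially the same route as the paper: derive the one-step energy identity $\norm{\theta_1}^2 = \norm{\theta_0}^2 - \nu\Enu\theta_0$ via the spectral decomposition, apply the hypothesis, and finish with $1-x\le e^{-x}$. The paper writes the identity by expanding in an eigenbasis $\set{e_i}$ of $-\lap$ rather than via operator calculus, but this is a cosmetic difference; the extra observation that $\nu c_0\le 1$ is a nice consistency check though not needed for the argument.
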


\begin{lemma}\label{l:H1ByL2Small}
  Let $0 < \lambda_1 < \lambda_2 \leq\cdots$ be the eigenvalues of the Laplacian, where each eigenvalue is repeated according to its multiplicity.
 Let $\lambda_N$ be the largest eigenvalue satisfying $\lambda_N\leq H_1(\nu)$, where we recall that $H_1$ is defined in~\eqref{e:H1}.
  If
  \begin{equation}\label{e:EnuUpper}
    \Enu \theta_0
      < \lambda_N \norm{\theta_0}^2\,,
  \end{equation}
  then for
  \begin{equation}\label{e:m0choice}
    m_0 = 2 \ceil[\Big]{
      h^{-1} \paren[\Big]{ \frac{1}{2}\lambda_N^{-\sfrac{(\alpha+\beta)}{2}}}
    }
  \end{equation}
  and all sufficiently small~$\nu > 0$, we have
  \begin{equation}\label{e:thetaM0Upper}
    \norm{ \theta_{m_0} }^2 \leq \exp\paren[\Big]{- \frac{\nu H_1(\nu) m_0}{16} } \norm{\theta_0}^2\,.
    %\quad\text{where}\quad
    %\tilde n_0 \defeq \ceil[\Big]{2 H \lambda_1^{\alpha-1} \lambda_N^{\beta+1}}\,.
  \end{equation}
  Here $h^{-1}$ is the inverse function of $h$.
  %, defined by
  %\begin{equation*}
  %  h^{-1}(x) \defeq \inf \set[\big]{ n \in \N \st h(n) \leq x }\,.
  %\end{equation*}
\end{lemma}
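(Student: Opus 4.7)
The plan is to show that the small-dissipation hypothesis at step~$0$ forces $U\theta_0$ to be concentrated at low frequencies in an $\dot H^\alpha$ sense, whereupon the mixing of~$\varphi$ moves this mass out of the low-frequency subspace within $m_0/2$ iterations of~$U$, and the remaining $m_0/2$ diffusion steps damp the resulting high-frequency content. I would argue by contradiction, assuming that~\eqref{e:thetaM0Upper} fails and deriving a contradiction from the discrete energy identity
\begin{equation*}
  \|\theta_0\|^2 - \|\theta_n\|^2 = \nu \sum_{k=0}^{n-1} \Enu \theta_k.
\end{equation*}

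First I would extract spectral information from the hypothesis. Let $P_N$ denote the spectral projection of $-\lap$ onto modes with eigenvalue $\leq \lambda_N$, and set $Q_N = I - P_N$. Writing $U\theta_0 = \sum_j a_j \psi_j$, the expansion $\nu\Enu\theta_0 = \sum_j (1-e^{-2\nu\lambda_j})|a_j|^2$, together with the comparisons $(1-e^{-x})/x \geq c > 0$ on bounded intervals and $(1-e^{-x}) \geq 1-e^{-2}$ for $x\geq 1$, yields under $\Enu\theta_0 < \lambda_N\|\theta_0\|^2$ that $\|P_N U\theta_0\|_\alpha \leq \lambda_N^{\alpha/2}\|\theta_0\|$ and $\|Q_N U\theta_0\|^2$ is at most a constant times $\nu\lambda_N\|\theta_0\|^2$. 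Applying the mixing assumption to the low-frequency truncation $P_N U\theta_0$ against any $g \in P_N L^2_0$ with $\|g\| \leq 1$ gives
\begin{equation*}
  |\ip{U^n P_N U\theta_0,\, g}|
    \leq h(n) \|P_N U\theta_0\|_\alpha \|g\|_\beta
    \leq h(n) \lambda_N^{(\alpha+\beta)/2} \|\theta_0\|.
\end{equation*}
Choosing $n = m_0/2 = \lceil h^{-1}(\tfrac{1}{2}\lambda_N^{-(\alpha+\beta)/2})\rceil$ makes the right-hand side at most $\tfrac{1}{2}\|\theta_0\|$, so $U^{n+1}\theta_0$ retains at most half its energy in $P_N$ (the small $Q_N U\theta_0$ correction contributing an additional $O(\sqrt{\nu\lambda_N})$).

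Next I would transfer this information from pure advection to the pulsed diffusion via the telescoping identity
\begin{equation*}
  \theta_n - U^n\theta_0 = \sum_{k=1}^n U^{n-k}(e^{\nu\lap} - I) U\theta_{k-1}.
\end{equation*}
The inequality $(1-e^{-x})^2 \leq 1-e^{-2x}$ gives $\|(e^{\nu\lap}-I)U\theta_{k-1}\|^2 \leq \nu\Enu\theta_{k-1}$, so Cauchy--Schwarz yields $\|\theta_n - U^n\theta_0\| \leq \sqrt{n\,(\|\theta_0\|^2 - \|\theta_n\|^2)}$. In the contradictory regime the energy identity forces $\|\theta_0\|^2 - \|\theta_n\|^2 \leq \nu H_1(\nu)\, m_0\,\|\theta_0\|^2/16$, and the defining relation~\eqref{e:H1} of $H_1$ yields the crucial scaling $m_0 \approx 1/\sqrt{\nu H_1(\nu)}$, whence the telescoping error is bounded by a small constant multiple of $\|\theta_0\|$. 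Combining with the mixing estimate gives $\|P_N U\theta_n\| \leq (3/4)\|\theta_0\|$, hence $\|Q_N U\theta_n\|^2$ is a definite fraction of $\|\theta_n\|^2$, so that $\Enu\theta_n$ is of order $\lambda_N\|\theta_n\|^2$. Invoking Lemma~\ref{l:H1ByL2Large} together with the energy identity over the remaining $m_0/2$ steps then contradicts the assumption that~\eqref{e:thetaM0Upper} fails.

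The main obstacle is the balance in the third paragraph: the telescoping error must be strictly smaller than the low-frequency mass generated by mixing, and the scaling $m_0 \approx 1/\sqrt{\nu H_1(\nu)}$ that makes this work is precisely what the definition of $H_1(\nu)$ in~\eqref{e:H1} encodes. A secondary subtlety is that the hypothesis of Lemma~\ref{l:H1ByL2Large} need not persist at subsequent steps, so the concluding bootstrap is cleaner if phrased at the level of the energy identity than as iterated one-step decay; extracting the specific constant $1/16$ in the exponent then requires careful bookkeeping through the spectral truncation and interpolation estimates.
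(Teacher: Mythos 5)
Your proposed decomposition of $U\theta_0$ into low- and high-frequency parts contains a substantive error that breaks the argument. You claim that the hypothesis $\Enu\theta_0 < \lambda_N\norm{\theta_0}^2$ gives $\norm{Q_N U\theta_0}^2 = O(\nu\lambda_N)\norm{\theta_0}^2$. It does not. Writing $a_k = \ip{U\theta_0, e_k}$, the hypothesis reads $\sum_k (1-e^{-2\nu\lambda_k})\abs{a_k}^2 < \nu\lambda_N\norm{\theta_0}^2$, and restricting to $\lambda_k > \lambda_N$ gives only
\begin{equation*}
  \norm{Q_N U\theta_0}^2 < \frac{\nu\lambda_N}{1-e^{-2\nu\lambda_N}}\,\norm{\theta_0}^2 \xrightarrow{\;\nu\lambda_N \to 0\;} \frac{1}{2}\norm{\theta_0}^2\,.
\end{equation*}
The uniform lower bound $1-e^{-x}\geq 1-e^{-2}$ applies only to modes with $\lambda_k \gtrsim 1/\nu$, but $Q_N$ also contains the wide intermediate band $\lambda_N < \lambda_k \lesssim 1/\nu$ where $1-e^{-2\nu\lambda_k}$ can be as small as $\approx 2\nu\lambda_N$. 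So $\norm{Q_N U\theta_0}$ can be comparable to $\norm{\theta_0}$, and the ``small correction'' in your mixing step is actually of order one. Your intended conclusion $\norm{P_N U^{n+1}\theta_0} \leq \tfrac{1}{2}\norm{\theta_0} + O(\sqrt{\nu\lambda_N})\norm{\theta_0}$ then degenerates to something like $\tfrac{1}{2}\norm{\theta_0} + \tfrac{1}{\sqrt{2}}\norm{\theta_0} > \norm{\theta_0}$, which carries no information.

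The paper sidesteps this by never performing the high/low split on the evolved data. It instead works with $\theta_1 = e^{\nu\lap}U\theta_0$ (the smoothing of one diffusion step is essential), notes that the comparison $2\norm{\theta_{m+1}}_1^2 \leq \Enu\theta_m$ upgrades the hypothesis to $\norm{\theta_1}_1^2 < \lambda_N\norm{\theta_1}^2$, and then uses Sobolev interpolation $\norm{\theta_1}_\alpha \leq \norm{\theta_1}^{1-\alpha}\norm{\theta_1}_1^\alpha$ to conclude $\norm{\theta_1}_\alpha^2 \leq \lambda_N^\alpha\norm{\theta_1}^2$. The mixing assumption then applies to the whole of $\theta_1$, with no high-frequency remainder to control. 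If you want to repair your argument, replace the projection step by this interpolation; otherwise your error term dominates everything. You should also note that the paper's proof is not by contradiction: it bounds $\sum_{m=1}^{m_0-1}\Enu\theta_m$ from below directly, handling the difference $\norm{\theta_m-\phi_m}$ in terms of $\sum_l \Enu\theta_l$ itself to get a self-improving inequality, which is simpler than the bootstrap you sketch. That is a stylistic divergence, but the $Q_N$ estimate is a genuine gap.
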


Momentarily postponing the proofs of Lemmas~\ref{l:H1ByL2Large} and~\ref{l:H1ByL2Small} we prove Theorem~\ref{t:disStrong}.
\begin{proof}[Proof of Theorem~\ref{t:disStrong}]
  Choosing $c_0 = \lambda_N$ and repeatedly applying Lemmas~\ref{l:H1ByL2Large} and~\ref{l:H1ByL2Small} we obtain an increasing sequence of times $n_k$  such that
  \begin{equation*}
    \norm{\theta_{n_k}} ^2\leq 
    \exp\paren[\Big]{- \frac{\nu H_1(\nu) n_k}{16} } \norm{\theta_0}^2\,,
      \qquad
      \text{and}\qquad
      n_{k+1} - n_k \leq m_0\,.
  \end{equation*}
  This immediately implies
  \begin{equation}\label{e:taud1}
    \tau_d \leq \frac{32}{\nu H_1(\nu)} + m_0\,.
  \end{equation}
  Note by choice of $\lambda_N$ we have
  \begin{equation*}
    h\paren[\Big]{\frac{1}{2\sqrt{\nu\lambda_N }}}\leq \frac{\lambda_N^{-(\alpha+\beta)/2}}{2}\,.
  \end{equation*}
  And since $h$ is decreasing, it further implies
  \begin{align*}
  h^{-1}\paren[\Big]{\frac{\lambda_N^{-(\alpha+\beta)/2}}{2}}\leq \frac{1}{2\sqrt{\nu\lambda_N}}\,.
  \end{align*}
 By the choice of $m_0$, we then have
  \begin{equation}\label{e:m0tmp}
    m_0 \leq \frac{1}{\sqrt {\nu\lambda_N}}\leq \frac{1}{\nu\lambda_N} \,.
  \end{equation}
 Recall by Weyl's lemma (see for instance~\cite{MinakshisundaramPleijel49}) we know
  \begin{equation}\label{e:weyl}
    \lambda_j \approx \frac{4\pi\, \Gamma(\frac{d}{2}+1)^{\sfrac{2}{d}}}{\vol(M)^{\sfrac{2}{d}}}j^{\sfrac{2}{d}}\,,
  \end{equation}
  asymptotically as $j \to \infty$.
  This implies $\lambda_{j+1}-\lambda_{j}=o(\lambda_j)$.
  Using this, and the fact that $H_1(\nu)\to \infty$ as $\nu\to 0$, we must have
  \begin{equation}\label{e:lambdahnu}
   \frac{1}{2}H_1(\nu)\leq  \lambda_N\leq H_1(\nu)\,,
  \end{equation}
  when $\nu$ is sufficiently small.
  Substituting this in~\eqref{e:m0tmp} gives
  \begin{align*}
  m_0\leq \frac{2}{\nu H(\nu)}\,,
  \end{align*} 
  and using this in~\eqref{e:taud1} yields the desired result.
\end{proof}

To prove Corollary~\ref{c:strong}, we only need to compute the function $H_1$ explicitly for the specific rate functions of interest.

\begin{proof}[Proof of Corollary~\ref{c:strong}]\label{pg:c:strong}
When the mixing rate function~$h$ is the power law as defined in~\eqref{e:powerh}, we compute
\begin{align*}
  H_1(\nu)
    %=\paren[\Big]{\frac{2^{p-1}}{c}}^{\frac{2}{\alpha+\beta+p}}\nu^{-\frac{p}{\alpha+\beta+p}}
    =\paren[\Big]{\frac{4^{p-1}}{c^2 \nu^p}}^{\frac{1}{\alpha+\beta+p}}\,.
\end{align*}
Substituting this into~\eqref{e:taud} yields~\eqref{e:taudstrongPower} as desired.

When the mixing rate function~$h$ is the exponential function as defined in~\eqref{e:exp}, we can not compute~$H_1$ exactly, as~\eqref{e:H1} only yields
\begin{equation}\label{e:H1bound1}
H_1(\nu) = \frac{c_2^2}{4\nu}\paren[\Big]{\ln 2+\ln{ c_1}+\frac{\alpha+\beta}{2}\ln{H_1(\nu)} }^{-2}\,.
\end{equation}
Since $H_1(\nu) \to \infty$ as $\nu \to 0$, we know $H_1(\nu) \geq 1$ for sufficiently small~$\nu$.
\begin{equation*}
  H_1(\nu) \leq \frac{C}{\nu}\,,
\end{equation*}
for some constant $C = C(c_1,c_2,\alpha,\beta)$.
Using this in~\eqref{e:H1bound1} yields
\begin{align*}
H_1(\nu)\geq \frac{C}{\nu\abs{\ln \nu}^2}\,.
\end{align*}
Substituting this in~\eqref{e:taud} yields~\eqref{e:taudstrongExp} as desired.
This argument can also be iterated to obtain improved bounds as stated in Remark~\ref{r:taudExpImproved}.
\end{proof}

It remains to prove Lemmas~\ref{l:H1ByL2Large} and~\ref{l:H1ByL2Small}.

\begin{proof}[Proof of Lemma~\ref{l:H1ByL2Large}]
  Let $\set{e_i}$ be a Hilbert basis of $L^2_0$ with $-\lap e_i = \lambda_i e_i$.
  Note that~\eqref{e:pulseddiff2} and~\eqref{e:EnuDef} imply the energy equality
  \begin{align}
    \nonumber
      \norm{\theta_{1}}^2 &= \sum_{i=1}^\infty e^{-2\nu \lambda_i}\abs{\ip{U\theta_0,e_i}}^2 = \sum_{i=1}^\infty \abs{\ip{U\theta_0,e_i}}^2 - \nu \Enu\theta_0
    \\
    \label{e:l2decay1}
      &= \norm{\theta_0}^2 - \nu \Enu\theta_0\,.
  \end{align}
  Now using~\eqref{e:DnuLower} immediately implies
  \begin{equation}
    \norm{\theta_{1}}^2 \leq (1-c_0\nu)\norm{\theta_0}^2\leq e^{-c_0\nu}\norm{\theta_0}^2\,.\qedhere
  \end{equation}
\end{proof}

In order to prove Lemma~\ref{l:H1ByL2Small}, we first need to estimate the difference between the pulsed diffusion and the underlying dynamical system.
We do this as follows.
\begin{lemma}\label{invisdiff}
  Let $\phi_n$, defined by
  \begin{equation*}%\label{e:phiNDef}
    \phi_n = U^n \theta_0\,,
  \end{equation*}
  be the evolution of $\theta_0$ under the dynamical system generated by~$\varphi$.
  Then for all $n \geq 0$ we have
  \begin{equation}\label{e:ThetaNMinusPhiN}
    \norm{\theta_n-\phi_n} \leq \sum_{k=0}^{n-1}\sqrt{\nu \Enu\theta_k}\,.
  \end{equation}
\end{lemma}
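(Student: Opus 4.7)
The plan is to write $\theta_n - \phi_n$ as a telescoping sum of one-step errors, where each one-step error compares the pulsed diffusion step $e^{\nu\lap}U\theta_k$ against the pure transport step $U\theta_k$. Since $\phi_{n+1} = U\phi_n$, I would start from the identity
\begin{equation*}
  \theta_{n+1} - \phi_{n+1}
    = e^{\nu\lap}U\theta_n - U\phi_n
    = (e^{\nu\lap} - 1)U\theta_n + U(\theta_n - \phi_n),
\end{equation*}
then take $L^2_0$ norms. The volume-preservation of $\varphi$ makes the Koopman operator $U$ unitary, so $\norm{U(\theta_n-\phi_n)} = \norm{\theta_n-\phi_n}$, and the triangle inequality gives the one-step recurrence
\begin{equation*}
  \norm{\theta_{n+1}-\phi_{n+1}} \leq \norm{(e^{\nu\lap}-1)U\theta_n} + \norm{\theta_n-\phi_n}.
\end{equation*}

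The key step is recognizing that the one-step error $\norm{(e^{\nu\lap}-1)U\theta_n}$ is exactly controlled by $\sqrt{\nu\Enu\theta_n}$. Expanding $U\theta_n$ in the eigenbasis $\{e_i\}$ of $-\lap$ with $-\lap e_i = \lambda_i e_i$, and writing $a_i = \ip{U\theta_n, e_i}$, one has
\begin{equation*}
  \norm{(e^{\nu\lap}-1)U\theta_n}^2 = \sum_i (1-e^{-\nu\lambda_i})^2 \abs{a_i}^2,
  \qquad
  \nu\Enu\theta_n = \sum_i (1-e^{-2\nu\lambda_i})\abs{a_i}^2.
\end{equation*}
The elementary inequality $(1-e^{-\nu\lambda_i})^2 \leq (1-e^{-\nu\lambda_i})(1+e^{-\nu\lambda_i}) = 1-e^{-2\nu\lambda_i}$ holds for each $i$, so term-by-term comparison yields $\norm{(e^{\nu\lap}-1)U\theta_n} \leq \sqrt{\nu\Enu\theta_n}$.

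Combining the two displays gives $\norm{\theta_{n+1}-\phi_{n+1}} \leq \sqrt{\nu\Enu\theta_n} + \norm{\theta_n-\phi_n}$, and iterating this from the base case $\theta_0 = \phi_0$ produces the telescoping bound~\eqref{e:ThetaNMinusPhiN}. I do not expect any serious obstacle here: the proof is essentially a one-step error analysis plus a spectral comparison, and both pieces are clean once one writes the difference as a telescoping sum exploiting the unitarity of $U$.
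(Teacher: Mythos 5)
Your proof is correct and follows essentially the same route as the paper's: the one-step splitting $\theta_{n+1}-\phi_{n+1} = (e^{\nu\lap}-1)U\theta_n + U(\theta_n-\phi_n)$, unitarity of $U$, the spectral comparison $(1-e^{-\nu\lambda_i})^2 \le 1-e^{-2\nu\lambda_i}$, and induction from $\theta_0=\phi_0$. No gaps.
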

\begin{proof}
  Since $\phi_n = U \phi_{n-1}$, we have
  \begin{align*}
    \norm{\theta_n-\phi_n}
      &\leq \norm{(e^{\nu\lap}-1)U\theta_{n-1}}
	+\norm{U \paren{{\theta_{n-1}-\phi_{n-1}}}}
    \\
      &= \paren[\Big]{\sum_{i=1}^\infty (e^{-\nu\lambda_i}-1)^2\abs{\ip{U\theta_{n-1},e_i}}^2}^{1/2}+\norm{\theta_{n-1}-\phi_{n-1}} \\
    &\leq \paren[\Big]{\sum_{i=1}^\infty (1-e^{-2\nu\lambda_i})\abs{\ip{U\theta_{n-1},e_i}}^2}^{1/2}+\norm{\theta_{n-1}-\phi_{n-1}} \\
    &\leq \sqrt{\nu \Enu \theta_{n-1}}+\norm{\theta_{n-1}-\phi_{n-1}}\,,
    % &\leq  \sum_{l=0}^{n-1}\sqrt{\nu \Enu \theta_{l}}\,.
  \end{align*}
  and hence~\eqref{e:ThetaNMinusPhiN} follows by induction.
\end{proof}

We now prove Lemma~\ref{l:H1ByL2Small}.
\begin{proof}[Proof of Lemma~\ref{l:H1ByL2Small}]
  By~\eqref{e:l2decay1}, we have
  \begin{align}\label{e:energybasic}
    \norm{\theta_{m_0}}^2=\norm{\theta_1}^2-\nu\sum_{m=1}^{m_0-1}\Enu\theta_{m}\,.
  \end{align}
  Thus the decay of $\norm{\theta_{m_0}}$ is governed by the growth of $\sum_{m=1}^{m_0-1}\Enu\theta_{m}$.
  In order to estimate $\Enu \theta_m$ we claim
  \begin{align}\label{e:epsilonH1Relation}
    2\norm{\theta_{m+1}}_1^2 \leq \Enu\theta_m  \leq 2\norm{U\theta_m}_1^2\,, \quad \text{for all } m \in \N\,.
  \end{align}
  Indeed, by definition of~$\Enu$ (equation~\eqref{e:EnuDef}) we have
  \begin{equation*}
    \nu \Enu \theta_m
      = \sum_{k=1}^\infty \paren[\big]{ 1 - e^{-2 \nu \lambda_k} } \abs{(U \theta_m)^\wedge(k)}^2\,,
      %\leq 2\nu \sum_{k=1}^\infty \lambda_k \abs{(U \theta_m)^\wedge(k)}^2
      %= 2\nu \norm{U \theta_m}_1^2\,.
  \end{equation*}
  where $(U\theta_m)^\wedge(k) \defeq \ip{U\theta_m, e_k}$ is the $k$-th Fourier coefficient of $U\theta_m$, and $\set{e_k}$ is a Hilbert basis of $L^2_0$ with $-\lap e_k = \lambda_k e_k$.
  Now~\eqref{e:epsilonH1Relation} follows from the inequalities
  \begin{equation*}
    2\nu \lambda_k e^{-2\nu \lambda_k} 
      \leq 1 - e^{-2 \nu \lambda_k}
      \leq 2 \nu \lambda_k \,.
  \end{equation*}

  We next claim that for all sufficiently small~$\nu$ we have
  \begin{align}\label{H1small}
    \norm{\theta_1}_1^2< \lambda_N\norm{\theta_1}^2\,.
  \end{align}
  To see this, note that~\eqref{e:EnuUpper} and~\eqref{e:epsilonH1Relation} imply
  \begin{equation}\label{e:tmp1}
    \norm{\theta_1}_1^2\leq \frac{1}{2}\Enu \theta_0< \frac{\lambda_N}{2}\norm{\theta_0}^2\,.
  \end{equation}
  Moreover, our choice of~$\lambda_N$ (in equation~\eqref{e:H1}) guarantees~$\lambda_N \leq 1 / (2\nu)$ for all~$\nu$ sufficiently small.
  Thus
  \begin{align*}
    \norm{\theta_1}^2=\norm{\theta_0}^2-\nu\Enu\theta_0\geq (1-\nu\lambda_N)\norm{\theta_0}^2\geq \frac{1}{2}\norm{\theta_0}^2\,,
  \end{align*}
  and substituting this in equation~\eqref{e:tmp1} gives~\eqref{H1small} as claimed.

  We now claim that for~$N$ and $m_0$ as in the statement of Lemma~\ref{l:H1ByL2Small} we have
  \begin{equation}\label{e:tmp2}
  \sum_{m=1}^{m_0-1}\Enu\theta_{m}\geq \frac{\lambda_Nm_0}{8}\norm{\theta_1}^2\,.
  \end{equation}
  Note equation~\eqref{e:tmp2} immediately implies~\eqref{e:thetaM0Upper}.
  Indeed, by~\eqref{e:energybasic}, we have
  \begin{align*}
    \norm{\theta_{m_0}}^2
      &\leq \paren[\Big]{1-\frac{\nu\lambda_Nm_0}{8}}\norm{\theta_1}^2
      \leq \exp\paren[\Big]{-\frac{\nu\lambda_Nm_0}{8}}\norm{\theta_0}^2
    \\
      &\leq \exp\paren[\Big]{-\frac{\nu H_1(\nu)m_0}{16}}\norm{\theta_0}^2\,,
  \end{align*}
  where last inequality followed from~\eqref{e:lambdahnu}.

  Thus it only remains to prove equation~\eqref{e:tmp2}.
  For this we let $\phi_m$, defined by
  \begin{equation*}
    \phi_m = U^{m-1} \theta_1\,,
  \end{equation*}
  be the evolution of~$\theta_1$ under the dynamical system generated by~$\varphi$.
  Let $P_N \colon L^2_0 \to L^2_0$
  %defined by
  %\begin{equation*}
  %  P_N f = \sum_{k = 1}^N \hat f(k) e_k\,,
  %\end{equation*}
  be the orthogonal projection onto $\operatorname{span}\set{e_1, \dots, e_N}$.
  Using~\eqref{e:epsilonH1Relation} we have
  \begin{align}
    \nonumber
    \MoveEqLeft
    \sum_{m=1}^{m_0-1}\Enu\theta_{m}
    \geq \sum_{m=m_0/2}^{m_0-1}\Enu\theta_{m}
    \geq 2\sum_{m=m_0/2}^{m_0-1}\norm{\theta_{m+1}}_1^2
    \\
    \nonumber
    &\geq 2\lambda_N\sum_{m=m_0/2}^{m_0-1}\norm{(I-P_N)\theta_{m+1}}^2
    \\
    \nonumber
    &\geq \lambda_N\Bigl( \sum_{m=m_0/2}^{m_0-1}\norm{(I-P_N)\phi_{m+1}}^2\\
    \nonumber
    &\qquad\qquad
      -2 \sum_{m=m_0/2}^{m_0-1}\norm{(I-P_N)(\theta_{m+1}-\phi_{m+1})}^2 \Bigr)
    \\
    \label{e:energy1}
    &\geq \lambda_N \Big(\frac{m_0}{2}\norm{\phi_1}^2-\sum_{m=m_0/2}^{m_0-1}\norm{P_N\phi_{m+1}}^2-2\sum_{m=m_0/2}^{m_0-1}\norm{\theta_{m+1}-\phi_{m+1}}^2\Big)\,.
  \end{align}

  Now using Lemma~\ref{invisdiff} we estimate the last term on the right of~\eqref{e:energy1} by
  \begin{align}\label{energysub1}
    \nonumber
    \sum_{m=m_0/2}^{m_0-1}\norm{\theta_{m+1}-\phi_{m+1}}^2 
    &\leq \sum_{m=m_0/2}^{m_0-1}\paren[\Big]{\sum_{l=1}^{m}\sqrt{\nu \Enu\theta_{l}}}^2 
    \leq  \sum_{m=m_0/2}^{m_0-1}m\nu \sum_{l=1}^{m}\Enu\theta_{l}\\
    &\leq \frac{ m_0^2\nu}{2}\sum_{l=1}^{m_0-1}\Enu\theta_{l}\,.
  \end{align}

  For the second term on the right of~\eqref{e:energy1} we note that since $U$ is strongly $\alpha,\beta$ mixing with rate function $h$, we have
  \begin{equation*}
    \norm{U^m f}_{-\beta} \leq h(m) \norm{f}_\alpha\,,
  \end{equation*}
  for every $f \in \dot H^\alpha$ (see also~\eqref{e:HminusBeta} in Appendix~\ref{s:mixrates}).
  This implies
  %\GI[2018-05-08]{Can use a interpolation to bound $\norm{\phi_1}_\alpha \leq \norm{\phi_1}^{1-\alpha} \norm{\phi_1}_1^\alpha$. Check if this improves anything.}
  \begin{align}\label{energysub2}
    \nonumber
    \MoveEqLeft[5]
    \sum_{m=m_0/2}^{m_0-1}\norm{P_N\phi_{m+1}}^2\leq \sum_{m=m_0/2}^{m_0-1}\lambda_N^\beta\norm{\phi_{m+1}}_{-\beta}^2
    \leq \sum_{m=m_0/2}^{m_0-1}\lambda_N^\beta h(m)^2 \norm{\phi_1}_\alpha^2\\
    \nonumber
   & \leq m_0 h\paren[\Big]{ \frac{m_0}{2}}^2\lambda_N^\beta\norm{\phi_1}_\alpha^2
    \leq m_0 h\paren[\Big]{ \frac{m_0}{2}}^2\lambda_N^\beta \norm{\theta_1}^{2-2\alpha}\norm{\theta_1}_1^{2\alpha}
    \\
   & \leq  m_0 h\paren[\Big]{ \frac{m_0}{2}}^2\lambda_N^{\alpha+\beta}\norm{\theta_1}^2 \,,
  \end{align}
  where the last inequality followed from \eqref{H1small}.
  
  Substituting \eqref{energysub1} and \eqref{energysub2} in \eqref{e:energy1} we obtain
  \begin{equation}\label{e:enutmp1}
    \sum_{m=1}^{m_0-1}\Enu\theta_{m}\geq \frac{m_0\lambda_N}{1+\lambda_N\nu m_0^2}\paren[\Big]{\frac{1}{2}- h\paren[\Big]{ \frac{m_0}{2}}^2\lambda_N^{\alpha+\beta}}\norm{\theta_1}^2\,.
  \end{equation}
  Clearly, by choice of~$m_0$ in~\eqref{e:m0choice}, we know
  \begin{equation}\label{e:hm0tmp1}
    h\paren[\Big]{ \frac{m_0}{2}}^2 \lambda_N^{\alpha + \beta} \leq \frac{1}{4}\,.
  \end{equation}
  Moreover, using the definition of~$H_1$~\eqref{e:H1} and the fact that $\lambda_N \leq H_1(\nu)$, we see
  \begin{equation}\label{e:lanmo2tmp1}
    \lambda_N \nu m_0^2 \leq 1\,.
  \end{equation}
  Now using~\eqref{e:hm0tmp1} and~\eqref{e:lanmo2tmp1} in~\eqref{e:enutmp1} implies~\eqref{e:tmp2}.
  This finishes the proof of Lemma~\ref{l:H1ByL2Small}.
  %Thus to finish the proof of Lemma~\ref{l:H1ByL2Small}, it only remains to prove~\eqref{e:lanmo2tmp1}.
\end{proof}

\subsection{The Weakly Mixing Case.}

We now turn our attention to Theorem~\ref{t:disWeak}.
The proof is very similar to the proof of Theorem~\ref{t:disStrong}, the only difference is that the analog of Lemma~\ref{l:H1ByL2SmallWeak} is not as explicit.

\begin{lemma}\label{l:H1ByL2SmallWeak}
  Let $\lambda_N$ be the largest eigenvalue of $-\lap$ such that  $\lambda_N\leq H_2(\nu)$,  and suppose
  \begin{equation*}
    \Enu \theta_0
      < \lambda_N \norm{\theta_0}^2\,.
  \end{equation*}
  Then, for all sufficiently small~$\nu > 0$, we have
  \begin{equation*}
    \norm{ \theta_{m_0} }^2 \leq \exp\paren[\Big]{- \frac{\nu H_2(\nu) m_0}{16} } \norm{\theta_0}^2\,.
    %\quad\text{where}\quad
    %\tilde n_0 \defeq \ceil[\Big]{2 H \lambda_1^{\alpha-1} \lambda_N^{\beta+1}}\,.
  \end{equation*}
  where
  \begin{align}\label{e:m0disweak}
m_0=2\floor[\Big]{h^{-1}\Big(\frac{1}{2\sqrt{\tilde c}}\lambda_N^{-\sfrac{(d+2\alpha+2\beta)}{4}}\Big)}\,, 
\end{align}
  and~$\tilde c$ is the constant in Theorem~\ref{t:disWeak} and Remark~\ref{r:ctilde}.
\end{lemma}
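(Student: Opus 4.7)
The plan is to mimic the proof of Lemma~\ref{l:H1ByL2Small} almost verbatim, replacing only the step that invokes the mixing hypothesis. Everything up through inequality~\eqref{e:energy1}---that is, the energy identity~\eqref{e:energybasic}, the two-sided bound~\eqref{e:epsilonH1Relation} between~$\Enu\theta_m$ and~$\norm{U\theta_m}_1^2$, the smallness~\eqref{H1small} of~$\norm{\theta_1}_1^2$, and the commutator-type splitting
\begin{equation*}
  \sum_{m=1}^{m_0-1}\Enu\theta_m
    \geq \lambda_N \Bigl(\frac{m_0}{2}\norm{\theta_1}^2
      - \sum_{m=m_0/2}^{m_0-1}\norm{P_N \phi_{m+1}}^2
      - 2\sum_{m=m_0/2}^{m_0-1}\norm{\theta_{m+1}-\phi_{m+1}}^2\Bigr)
\end{equation*}
together with the bound~\eqref{energysub1} on the transport error via Lemma~\ref{invisdiff}---does not use strong mixing and so carries over directly with $\lambda_N$ now chosen as the largest eigenvalue below $H_2(\nu)$.

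The only place where strong mixing was essential is the estimate~\eqref{energysub2} on $\sum_{m=m_0/2}^{m_0-1}\norm{P_N\phi_{m+1}}^2$, and the key step is to replace that pointwise bound by one that exploits only the Ces\`aro averages in~\eqref{e:hmixrateWeak}. For this I would expand the projection in the Laplace eigenbasis, interchange the order of summation, and enlarge the range of~$m$ to exploit weak mixing: writing $\phi_{m+1} = U^m\theta_1$,
\begin{align*}
  \sum_{m=m_0/2}^{m_0-1}\norm{P_N\phi_{m+1}}^2
  &\leq \sum_{i=1}^N \sum_{m=0}^{m_0-1}\abs[\big]{\ip{U^m\theta_1,e_i}}^2
    \leq m_0\, h(m_0)^2 \norm{\theta_1}_\alpha^2 \sum_{i=1}^N \lambda_i^\beta\,.
\end{align*}
The inner sum of eigenvalues is controlled by Weyl's law (cf.~Remark~\ref{r:ctilde}): $N\leq \tilde c\, \lambda_N^{d/2}$, so $\sum_{i=1}^N\lambda_i^\beta \leq \tilde c\, \lambda_N^{d/2+\beta}$. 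Combining this with the interpolation $\norm{\theta_1}_\alpha^2 \leq \lambda_N^\alpha\norm{\theta_1}^2$ furnished by~\eqref{H1small} gives
\begin{equation*}
  \sum_{m=m_0/2}^{m_0-1}\norm{P_N\phi_{m+1}}^2
    \leq m_0\, h(m_0)^2\, \tilde c\, \lambda_N^{(d+2\alpha+2\beta)/2}\,\norm{\theta_1}^2\,.
\end{equation*}

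By the choice~\eqref{e:m0disweak} of $m_0$ we have $h(m_0)^2\,\tilde c\,\lambda_N^{(d+2\alpha+2\beta)/2}\leq 1/4$, which is exactly the analog of the condition $h(m_0/2)^2\lambda_N^{\alpha+\beta}\leq 1/4$ used in the strong case. Likewise, the definition~\eqref{e:H2} of $H_2$ together with $\lambda_N\leq H_2(\nu)$ yields, via the same inversion as in the strong case, $m_0 \leq 1/\sqrt{\nu\lambda_N}$, hence $\lambda_N\nu m_0^2\leq 1$. Substituting these two estimates into the analog of~\eqref{e:enutmp1} produces
\begin{equation*}
  \sum_{m=1}^{m_0-1}\Enu\theta_m \geq \frac{\lambda_N m_0}{8}\norm{\theta_1}^2\,,
\end{equation*}
and the conclusion then follows by chaining this with the energy identity~\eqref{e:energybasic} and the bound $\lambda_N \geq H_2(\nu)/2$ valid for small enough $\nu$ (a consequence of Weyl's law, as in the proof of Theorem~\ref{t:disStrong}).

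The main (very mild) obstacle is bookkeeping the loss incurred by using Ces\`aro averages instead of pointwise decay: we pay a factor of $\sqrt{m_0}$ when discarding the averaging, which is exactly offset by the extra $\lambda_N^{d/2}$ coming from Weyl's law in the required threshold for $h(m_0)$. This is precisely why the exponent in~\eqref{e:H2} is shifted from $(\alpha+\beta)/2$ (as in $H_1$) to $(2\alpha+2\beta+d)/4$. No new algebraic identities are needed beyond this substitution.
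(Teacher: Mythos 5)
Your proposal is correct and takes essentially the same route as the paper's proof: you keep the energy identity, the $\Enu$--$H^1$ equivalence~\eqref{e:epsilonH1Relation}, the smallness bound~\eqref{H1small}, the splitting~\eqref{e:energy1}, and the error estimate~\eqref{energysub1} unchanged, and replace only the estimate of $\sum\norm{P_N\phi_{m+1}}^2$ by expanding $P_N$ in the Laplace eigenbasis, enlarging the range of the $m$-sum to $[0,m_0-1]$, invoking the Ces\`aro bound~\eqref{e:hmixrateWeak} for each fixed eigenmode, and summing $N$ contributions via Weyl's law --- exactly the paper's argument (including the same observation that the $\sqrt{m_0}$ loss is absorbed into the shifted exponent in $H_2$), with only minor indexing differences such as $h(m_0)$ versus $h(m_0-1)$.
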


Given Lemma~\ref{l:H1ByL2SmallWeak}, the proof of Theorem~\ref{t:disWeak} is essentially the same as the proof of Theorem~\ref{t:disStrong}.
 \begin{proof}[Proof of Theorem \ref{t:disWeak}]
  Choosing $c_0 = \lambda_N$ and repeatedly applying Lemmas~\ref{l:H1ByL2Large} and~\ref{l:H1ByL2SmallWeak} we obtain an increasing sequence of times $n_k$  such that
  \begin{equation*}
    \norm{\theta_{n_k}} ^2\leq 
    \exp\paren[\Big]{- \frac{\nu H_2(\nu) n_k}{16} } \norm{\theta_0}^2\,,
      \qquad
      \text{and}\qquad
      n_{k+1} - n_k \leq m_0\,.
  \end{equation*}
  This immediately implies
  \begin{equation}\label{e:taud2}
    \tau_d \leq \frac{32}{\nu H_2(\nu)} + m_0\,.
  \end{equation}
  By the choice of $m_0$ and $\lambda_N$, we notice that 
  \begin{equation*}
  m_0
  \leq \frac{1}{\sqrt{\nu\lambda_N}} \leq  \frac{1}{\nu\lambda_N}\leq \frac{2}{\nu H_2(\nu)}\,.
  \end{equation*}
  This proves~\eqref{e:taudWeak}.
\end{proof}
 
Before proving Lemma~\ref{l:H1ByL2SmallWeak}, we prove Corollary~\ref{c:taudWeakPower}.
%It directly follows that when the mixing rate function~$h$ is power law, we can calculate~$H_2$ explicitly.
 
\begin{proof}[Proof of Corollary~\ref{c:taudWeakPower}]
  The proof only involves computing~$H_2$ explicitly when~$h$ is given by the power law~\eqref{e:powerh}.
  Using~\eqref{e:H2} we see
 \begin{align*}
   H_2(\nu)=\paren[\Big]{
     2^{(p+2)/2}c\sqrt{\tilde c}}^{-4\delta'}%\frac{4}{2\alpha+2\beta+2p+d}}
    \nu^{-2p\delta'} %\frac{2p}{2\alpha+2\beta+2p+d}}\,.
    \,,
    \qquad\text{where}\quad
    \delta' \defeq \frac{1}{2\alpha + 2\beta + 2p + d} \,.
 \end{align*}
 Substituting this into~\eqref{e:taudWeak} yields~\eqref{e:taudWeakPower} as desired.
 \end{proof}
It remains to prove Lemma~\ref{l:H1ByL2SmallWeak}.

\begin{proof}[Proof of Lemma \ref{l:H1ByL2SmallWeak}]
  We first claim that~\eqref{e:tmp2} still holds if $\lambda_N$, $m_0$ chosen as in the statement of Lemma~\ref{l:H1ByL2SmallWeak}.
  Once~\eqref{e:tmp2} is established, then the remainder of the proof is identical to that of  Lemma~\ref{l:H1ByL2Small}.

  To prove~\eqref{e:tmp2}, we observe that the lower bound~\eqref{e:energy1} (from the proof of Lemma~\ref{l:H1ByL2Small}) still holds in this case.
  For last term on the right of~\eqref{e:energy1}, we use the bound~\eqref{energysub1}.
  The only difference here is to estimate the second term using the weak mixing assumption~\eqref{e:hmixrateWeak} instead.
  Observe
  \begin{align*}
    \frac{1}{m_0}\sum_{m=0}^{m_0-1}\norm{P_N\phi_{m+1}}^2
    &=\sum_{l=1}^N \frac{1}{m_0} \sum_{m=0}^{m_0-1} \abs{ \ip{ e_l, U^m \theta_1} }^2\,.
  \end{align*} 
  Since $\varphi$ is  weak $\alpha, \beta$-mixing with rate function $h$,~\eqref{e:hmixrateWeak} yields
  %\GI[2018-05-08]{Check constants, fractional chain rule. Also can possibly improve this using interpolation for $\norm{\theta_1}_\alpha$.}
  \begin{align*}
    \frac{1}{m_0} \sum_{m=0}^{m_0-1} \abs{ \ip{e_l, U^m \theta_1} }^2
      \leq h(m_0-1)^2\norm{\theta_1}_{\alpha}^2\lambda_l^\beta
      \leq  h(m_0-1)^2\lambda_N^\beta\norm{\theta_1}_{\alpha}^2
    \\
      \leq h(m_0-1)^2\lambda_N^\beta\norm{\theta_1}^{2-2\alpha}\norm{\theta_1}_1^{2\alpha} 
      \leq  h(m_0-1)^2\lambda_N^{\beta+\alpha}\norm{\theta_1}^{2}\,,
  \end{align*} 
  Note that the last inequality above comes from~\eqref{H1small}.  This gives
  \begin{align*}
    \frac{1}{m_0}\sum_{m=0}^{m_0-1}\norm{P_N\phi_{m+1}}^2
    &
     \leq 
     h(m_0-1)^2N\lambda_N^{\beta+\alpha}\norm{\theta_1}^{2}
     \\
     &
    \leq \tilde ch(m_0-1)^2\lambda_N^{(d+2\alpha+2\beta)/2}\norm{\theta_1}^2\,.
  \end{align*}
  Here, the last inequality follows from our choice of~$\tilde c$ in Remark~\ref{r:ctilde} which guarantees
  \begin{equation*}
    \frac{\tilde c \lambda_N^{d/2}}{2} \leq N \leq \tilde c\lambda_N^{d/2}\,,
  \end{equation*}
  for all sufficiently large~$N$.
  This yields%
  \footnote{
    Note that in the proof of Lemma~\ref{l:H1ByL2Small}, used
    \begin{equation*}
      \sum_1^{m_0 -1} \mathcal E_\nu \theta_m
	\geq \sum_{m_0/2}^{m_0 - 1} \mathcal E_\nu \theta_m
	%\geq 2\sum_{\floor{m_0/2}}^{m_0 - 1} \norm{\theta_{m+1}}_1^2
    \end{equation*}
    and focussed on bounding the tail of the sum in order to effectively use the decay of~$h$.
    In~\eqref{e:sum1}, however, using only the tail of the sum
    %summing from~$m_0 / 2$ to $m_0$
    does not improve our final result, and we can directly sum over the entire history.
    We only do it here because it allows us to directly use last part of the proof of Lemma~\ref{l:H1ByL2Small}.
  }
  \begin{align}\label{e:sum1}
  \sum_{m=\sfrac{m_0}{2}}^{m_0-1}\norm{P_N\phi_{m+1}}^2
    &\leq \sum_{m=0}^{m_0-1}\norm{P_N\phi_{m+1}}^2
  \\
  \nonumber
    &\leq \tilde cm_0h(m_0-1)^2\lambda_N^{(d+2\alpha+2\beta)/2}\norm{\theta_1}^2\,.
  \end{align}
  Substituting this and~\eqref{energysub1} in~\eqref{e:energy1} gives
  \begin{align}\label{e:disweaktmp2}
    \sum_{m=1}^{m_0-1}\Enu\theta_{m} \geq \frac{m_0\lambda_N}{1+m_0^2\nu\lambda_N}\Big(\frac{1}{2}- \tilde c\,h(m_0-1)^2\lambda_N^{(d+2\alpha+2\beta)/2}\Big)\norm{\theta_1}^2\,.
  \end{align}
 
  Now, the choice of $m_0$ in~\eqref{e:m0disweak} forces
   \begin{align}\label{e:m0disweaktmp}
  \tilde c\, h(m_0-1)^2\lambda_N^{(d+2\alpha+2\beta)/2}\leq \frac{1}{4}\,.
  \end{align}
  Moreover, using~\eqref{e:H2} and the fact that $\lambda_N\leq H_2(\nu)$, we see
  \begin{align}\label{e:lambdadisweaktmp}
  \lambda_N\nu m_0^2\leq 4h^{-1}\Big(\frac{1}{2\sqrt{\tilde c}}\lambda_N^{-\sfrac{(d+2\alpha+2\beta)}{4}}\Big)^2\nu \lambda_N\leq 1\,.
  \end{align}
 Substituting~\eqref{e:m0disweaktmp} and~\eqref{e:lambdadisweaktmp} in~\eqref{e:disweaktmp2}  implies~\eqref{e:tmp2}, which finishes the proof.
 \end{proof}

\section{Toral Automorphisms and the Energy Decay of Pulsed Diffusions.}\label{s:toral}

In this section we study pulsed diffusions where the underlying map~$\varphi$ is a toral automorphism, and prove Theorem~\ref{t:energydecay}.
Recall a \emph{toral automorphism} is a map of the form
\begin{equation}\label{e:toralDef}
  \varphi(x) = A x \pmod{\Z^d}\,,
\end{equation}
where $A \in \SL_d(\Z)$ is an integer valued $d \times d$ matrix with determinant~$1$.
Maps of this form are known as \emph{``cat maps''}, and one particular example is when $d = 2$ and 
\begin{equation*}
  A = \begin{pmatrix} 2 & 1\\ 1 & 1\end{pmatrix}\,.
\end{equation*}
The reason for the somewhat unusual name is that originally ``CAT'' was an abbreviation for Continuous Automorphism of the Torus.
However, it has now become tradition to demonstrate the mixing effects of this map using the image of a cat~\cite{SturmanOttinoEA06}.

\subsection{Mixing Rates of Toral Automorphisms}
It is well known that no eigenvalue of~$A$ is a root of unity, if and only if $\varphi$ is ergodic, if and only if $\varphi$ is strongly mixing (see~\cite{Katznelson71}, Page 160, problem 4.2.11 in~\cite{KatokHasselblatt95})
Our interest is in understanding the mixing rates in the sense of Definition~\ref{d:mixrate}.

\begin{proposition}\label{p:catmapExpMix}
  Let $A \in \SL_d(\Z)$ be such that:
  \begin{enumerate}[\hspace{4ex}(C1)]
    \item\mylabel{A1}{(C1)}
      No eigenvalue of~$A$ is a root of unity,
    \item\mylabel{A2}{(C2)}
      and the characteristic polynomial of~$A$ is irreducible over~$\Q$.
  \end{enumerate}
  If $\alpha, \beta > 0$ then the toral automorphism~$\varphi\colon \T^d \to \T^d$ defined by~\eqref{e:toralDef} is strongly $\alpha$, $\beta$ mixing with rate function
  \begin{equation}\label{e:toralMixRate}
    h(n) = C_{\alpha, \beta} \exp\paren[\Big]{ -\frac{n}{C_0} \paren[\Big]{\alpha \varmin \frac{\beta}{d - 1}} }\,,
  \end{equation}
  for some finite non-zero constants $C_{\alpha, \beta} = C_{\alpha, \beta}(A, \alpha, \beta)$ and $C_0 = C_0(A)$.
\end{proposition}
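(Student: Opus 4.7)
The plan is to work in Fourier series on~$\T^d$.
Writing $f = \sum_{k \in \Z^d \setminus \set{0}} \hat f(k) e_k$ with $e_k(x) = e^{2\pi i k \cdot x}$, the Koopman operator acts on characters by $U e_k = e_{A^T k}$, so
\begin{equation*}
  \ip{U^n f, g} = \sum_{k \neq 0} \hat f(k)\, \overline{\hat g((A^T)^n k)}\,.
\end{equation*}
Every nonzero Fourier mode of a mean-zero function on~$\T^d$ satisfies $\abs{k} \geq 1$, so $\norm{f}_{\alpha'} \leq \norm{f}_\alpha$ whenever $\alpha' \leq \alpha$.
Replacing $\alpha$ by $\alpha' \defeq \alpha \varmin \frac{\beta}{d-1}$ therefore only strengthens the hypothesis on~$f$, and it suffices to prove~\eqref{e:hmixrateStrong} in the regime $\alpha(d-1) \leq \beta$.
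In that regime, a weighted Cauchy--Schwarz inequality (with weight $\abs{k}^{-\alpha}\abs{(A^T)^n k}^{-\beta}$) combined with the bijection $m = (A^T)^n k$ of~$\Z^d$ reduces matters to the Diophantine estimate
\begin{equation}\label{e:dioplan}
  \abs{(A^T)^{-n} m} \geq c\, \lambda^n\, \abs{m}^{-(d-1)}\,, \qquad m \in \Z^d \setminus \set{0}\,,
\end{equation}
for some $\lambda > 1$ and $c > 0$ depending only on~$A$.

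To establish~\eqref{e:dioplan} I would proceed as follows.
By~\ref{A2} all eigenvalues of~$A^T$ are Galois conjugates of a single irreducible algebraic integer, and combined with~\ref{A1}, $\det A = \pm 1$, and Kronecker's theorem, this guarantees that~$A^T$ admits at least one eigenvalue of modulus strictly greater than one and, by the same argument applied to~$A^{-T}$, at least one of modulus strictly less than one.
Let $V^s \subset \C^d$ be the $A^T$-invariant subspace spanned by the generalized eigenvectors corresponding to eigenvalues of modulus less than one, and let $\pi^s$ denote the spectral projection onto~$V^s$.
Then $\abs{(A^T)^{-n} \pi^s m} \gtrsim \lambda^n \abs{\pi^s m}$ with~$\lambda$ the reciprocal of the largest stable eigenvalue modulus, while the complementary spectral components of $(A^T)^{-n} m$ grow at most polynomially in~$n$.
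A Liouville-type estimate should then yield $\abs{\pi^s m} \geq c \abs{m}^{-(d-1)}$ for nonzero integer~$m$: the entries of~$\pi^s$ lie in the splitting field of the characteristic polynomial of~$A$, which is an algebraic extension of~$\Q$ of degree~$d$, and~\ref{A2} guarantees that $\ker \pi^s$ contains no nonzero rational vector.

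Inserting~\eqref{e:dioplan} into the Cauchy--Schwarz estimate and using $\alpha'(d-1) - \beta \leq 0$ to absorb the resulting power of~$\abs{m}$ produces a bound of the form $\abs{\ip{U^n f, g}} \leq c^{-\alpha'}\lambda^{-\alpha' n}\norm{f}_\alpha \norm{g}_\beta$, which is~\eqref{e:toralMixRate} with $C_0 = 1/\ln \lambda$.
I expect the main obstacle to be the Liouville-type inequality $\abs{\pi^s m} \geq c \abs{m}^{-(d-1)}$.
The exponent~$d-1$ reflects the algebraic degree of the eigenspaces, and producing it uniformly in~$m$ requires a careful height argument for the entries of the spectral projector~$\pi^s$.
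The irreducibility hypothesis~\ref{A2} is essential here, since without it $A^T$ could preserve a proper rational invariant subspace, allowing nonzero integer vectors to lie entirely in $\ker \pi^s$ and destroying any polynomial lower bound.
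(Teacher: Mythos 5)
Your Fourier-analytic reduction is exactly right: the identity $\ip{U^n f, g} = \sum_{k\neq 0}\hat f(k)\,\overline{\hat g((A^T)^n k)}$, the observation that it suffices to prove the estimate with $\alpha$ replaced by $\alpha' = \alpha \wedge \frac{\beta}{d-1}$, and the weighted Cauchy--Schwarz step reducing everything to a lower bound on $\abs{(A^T)^{-n}m}$ for nonzero $m \in \Z^d$ all match the paper's proof in substance. Where you diverge is in how you extract the Diophantine estimate. The paper diagonalizes and uses a single eigenvector $v_1$ of $B = (A^T)^{-1}$ with $\abs{\lambda_1} > 1$, together with Lemma~\ref{l:diophantine}: in the eigenbasis $v_1,\dots,v_d$ provided by that lemma, the coordinates $a_1(m),\dots,a_d(m)$ of any nonzero $m\in\Z^d$ satisfy $\prod_i\abs{a_i(m)} \geq 1$, because they form a full orbit of Galois conjugates and their product is a nonzero rational integer. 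This yields $\abs{a_1(m)} \gtrsim \abs{m}^{-(d-1)}$ immediately, without ever introducing the stable subspace or a spectral projector. Your version projects onto the full stable subspace $V^s$ of $A^T$ and asks for a Liouville bound $\abs{\pi^s m} \gtrsim \abs{m}^{-(d-1)}$; this is a correct statement, but its proof still reduces to bounding a single eigen-coordinate $\abs{a_i(m)}$ from below, so the two approaches converge to the same algebraic input, with the paper's single-eigenvector formulation being the cleaner one (and also giving a slightly sharper $\lambda$, since one may pick the most contracting stable direction rather than the slowest one).

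Two small defects in your sketch. First, the claim that the splitting field of the characteristic polynomial ``is an algebraic extension of $\Q$ of degree $d$'' is wrong: the splitting field has degree dividing $d!$, and is typically much larger than $d$. What is degree $d$ is $\Q(\lambda_1)$ for a single eigenvalue $\lambda_1$. The exponent $d-1$ in the Liouville bound does not come from the degree of the splitting field; it comes from the product formula $\prod_{i=1}^d \abs{a_i(m)} \geq 1$ (itself a consequence of the Galois group acting transitively on the eigenvalues, which is where irreducibility~\ref{A2} enters), plus $\abs{a_i(m)} \lesssim \abs{m}$ for each $i$. You should correct the justification accordingly. Second, your appeal to ``complementary spectral components growing at most polynomially'' is unnecessary and makes the estimate look asymptotic-in-$n$; since $\pi^s$ commutes with $A^T$ and $V^s$, $(I - \pi^s)\C^d$ are complementary invariant subspaces, one has directly $\abs{(A^T)^{-n}m} \geq c\,\abs{(A^T)^{-n}\pi^s m}$ with a constant depending only on the angle between those subspaces, valid for every $n \geq 0$. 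With these two points repaired, your plan becomes a complete proof, genuinely equivalent to the paper's but phrased in the language of spectral projections rather than an explicit eigenvector basis.
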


\begin{remark}
  Condition~\ref{A2} above is equivalent to assuming that~$A$ has no proper invariant subspaces in~$\Q^d$.
\end{remark}

For completeness, we also mention that if~$A$ satisfies Condition~\ref{A1} above, then~$A$ is also weakly $\alpha$, $\beta$ if either~$\alpha = 0$ or~$\beta = 0$ (but not both).

\begin{proposition}\label{p:catmapWeakMix}
  Let~$A \in SL_d(\Z)$ satisfy the condition~\ref{A1} in Proposition~\ref{p:catmapExpMix}.
  \begin{enumerate}
    \item
      If either $\alpha > 0$ and $\beta = 0$,  or $\alpha = 0$ and $\beta > 0$, then there exists a finite constant $C_{\alpha, \beta} = C(\alpha, \beta)$ such that~$\varphi$ is weakly $\alpha$, $\beta$ mixing with rate function
      \begin{equation}\label{e:toralMixRateWeak0}
	h(n) =
	  \begin{dcases}
	    \frac{C_{\alpha, \beta}}{\sqrt{n}}\,,
	      & \alpha \varmax \beta > \frac{d}{2}\,,
	    \\
	    C_{\alpha, \beta} \paren[\Big]{\frac{\ln n}{n}}^{1/2}\,,
	      & \alpha \varmax \beta  = \frac{d}{2}\,,
	    \\
	    \frac{C_{\alpha, \beta}}{n^{(\alpha \varmax \beta) / d }}\,,
	      & \alpha \varmax \beta < \frac{d}{2}\,.
	  \end{dcases}
      \end{equation}
      
    \item
      If further~$A$ satisfies condition~\ref{A2} in Proposition~\ref{p:catmapWeakMix}, and both $\alpha > 0$ and $\beta > 0$, then there exists a finite constant $C_{\alpha, \beta} = C(A, \alpha, \beta)$ such that~$\varphi$ is weakly $\alpha$, $\beta$ mixing with rate function
      \begin{equation}\label{e:toralMixRateWeak}
	h(n) = \frac{C_{\alpha, \beta}}{\sqrt{n}}\,.
      \end{equation}
  \end{enumerate}
\end{proposition}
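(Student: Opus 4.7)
The plan is to treat the two parts separately.

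Part~(2) follows immediately from Proposition~\ref{p:catmapExpMix}. Under (C1) and (C2), with both $\alpha,\beta>0$, that proposition gives strong $\alpha$, $\beta$ mixing at an exponentially decaying rate~$h_s$, so $\sum_{m=0}^\infty h_s(m)^2<\infty$. Hence
\[
\frac{1}{n}\sum_{m=0}^{n-1}|\langle U^m f,g\rangle|^2 \leq \frac{\|f\|_\alpha^2\|g\|_\beta^2}{n}\sum_{m=0}^{n-1}h_s(m)^2 \leq \frac{C\|f\|_\alpha^2\|g\|_\beta^2}{n},
\]
which yields the weak rate $1/\sqrt{n}$ claimed in~\eqref{e:toralMixRateWeak}.

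For part~(1), the observation that $|\langle U^m f,g\rangle| = |\langle U^{-m}g,f\rangle|$, together with the fact that~$\varphi^{-1}$ also satisfies~(C1) (its matrix $A^{-1}$ has eigenvalues $1/\lambda$, none of which are roots of unity), reduces matters to the case $\alpha>0$, $\beta=0$. Expanding in Fourier series and using $(U^m f)^\wedge(j)=\hat f(B^{-m}j)$ with $B=A^T$ gives
\[
\langle U^m f,g\rangle = \sum_{k\in\Z^d\setminus\{0\}}\hat f(k)\,\overline{\hat g(B^m k)}.
\]
Two applications of Cauchy-Schwarz, first in~$m$ (treating the inner product $\sum_m \overline{\hat g(B^m k)}\hat g(B^m k')$ as an $\ell^2$ inner product) and then in~$k$ with weight $|k|^{-2\alpha}$, followed by the change of index $j=B^m k$, yield
\[
\sum_{m=0}^{n-1}|\langle U^m f,g\rangle|^2 \leq \|f\|_\alpha^2 \sum_{j\neq 0}|\hat g(j)|^2\sum_{m=0}^{n-1}|B^{-m}j|^{-2\alpha}.
\]

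The main step, and the place where hypothesis~(C1) enters crucially, is to bound $\sum_{m=0}^{n-1}|B^{-m}j|^{-2\alpha}$ uniformly in $j\neq 0$. Since no eigenvalue of~$B$ is a root of unity, $B^k-I$ is invertible over $\Q$ for every $k\geq 1$, so $B^k j = j$ forces $j=0$; consequently the iterates $\{B^{-m}j\}_{m=0}^{n-1}$ are $n$ distinct nonzero integer vectors. A standard lattice-point count (at most $C_d R^d$ integer vectors in a ball of radius~$R$) then shows the $k$-th smallest has norm at least $c_d k^{1/d}$, so
\[
\sum_{m=0}^{n-1}|B^{-m}j|^{-2\alpha} \leq C_d\sum_{k=1}^n k^{-2\alpha/d}.
\]
This last sum is bounded when $\alpha>d/2$, is $O(\ln n)$ when $\alpha=d/2$, and is $O(n^{1-2\alpha/d})$ when $\alpha<d/2$. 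Summing against $|\hat g(j)|^2$ to recover $\|g\|^2$ and dividing by~$n$ produces the three rate functions in~\eqref{e:toralMixRateWeak0}. The conceptually delicate point is the distinctness of the orbit, which fails without~(C1) (e.g., a periodic orbit of~$B$ would contribute a non-vanishing term), but is immediate once (C1) is invoked.
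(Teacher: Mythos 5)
Your proof is correct, and it runs on the same essential inputs as the paper's: Cauchy--Schwarz with a Sobolev weight, distinctness of the orbit $\{B^{\pm m}j\}$ (which is where~\ref{A1} enters, via the non-vanishing of $\det(B^k-I)$), and the lattice-point count that says $n$ distinct nonzero integer vectors cannot all have norm $o(n^{1/d})$. The difference is in the bookkeeping after Cauchy--Schwarz: the paper keeps the double sum $\sum_i\sum_k |\hat f(B^ik)|^2/|k|^{2\beta}$, splits frequencies at a cutoff $|k|\le m$ versus $|k|>m$, sums over the orbit on each piece, and optimizes $m\sim n^{1/d}$ at the end; you instead change variables $j=B^mk$ to decouple the sums, obtain a bound $\|f\|_\alpha^2\sum_j|\hat g(j)|^2\bigl(\sum_m|B^{-m}j|^{-2\alpha}\bigr)$ that is uniform in $j$, and absorb the case split $\alpha\gtrless d/2$ into the elementary estimate for $\sum_{\ell=1}^n\ell^{-2\alpha/d}$. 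Both arguments give the same three rates; your packaging is a little cleaner because the cutoff parameter and its optimization disappear. Your reduction of the $\alpha=0$, $\beta>0$ case to $\alpha>0$, $\beta=0$ via $\langle U^m f,g\rangle=\langle f, U^{-m}g\rangle$ and the observation that $A^{-1}$ also satisfies~\ref{A1} is the same duality the paper uses (just in the opposite direction), and Part~(2) is identical to the paper's.
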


%Proposition~\ref{p:catmapWeakMix} is well known, and we provide a short proof for completeness at the end of this section.
When $d = 2$, Proposition~\ref{p:catmapExpMix} is well known and can be proved elementarily.
In higher dimensions, a version of Proposition~\ref{p:catmapExpMix} was proved by Lind~\cite[Theorem 6]{Lind82} using a lemma of Katznelson~\cite[Lemma 3]{Katznelson71} on Diophantine approximation.
Proposition~\ref{p:catmapExpMix} can also be deduced from the results on the algebraic structure of toral automorphisms developed in~\cite{FannjiangWoowski03}.
These arguments, however, rely on three sophisticated results from number theory: the Schmidt subspace theorem~\cite{Schmidt80}, Minkowski's theorem on linear forms~\cite[Chapter VI]{Newman72} and van der Waerdern's theorem on arithmetic progressions~\cites{Waerden27,Lukomskaya48}.
We will avoid using these results, and instead prove Proposition~\ref{p:catmapExpMix} directly using the following two algebraic lemmas.
These lemmas will be reused subsequently in the proof of sharpness of the double exponential bound~\eqref{e:dexp} in Theorem~\ref{t:energydecay}.

\begin{lemma}\label{l:diophantine}
  Suppose~$A \in \SL_d(\Z)$ satisfies the assumptions~\ref{A1} and~\ref{A2} in Proposition~\ref{p:catmapExpMix}.
  There exists a basis $\set{v_1, \dots, v_d}$ of $\C^d$ such that the following hold:
  \begin{enumerate}
    \item
      Each $v_i$ is an eigenvector of~$A$.
    \item
      If~$k \in \Z^d - 0$, and $a_i = a_i(k) \in \C$ are such that
      \begin{equation*}
	k = \sum_1^d a_i(k) v_i = \sum_1^d a_i v_i\,,
      \end{equation*}
      then we must have
      \begin{equation}\label{e:prodAi}
	\prod_{i =1}^d \abs{a_i(k)} \geq 1\,.
      \end{equation}
  \end{enumerate}
  \iffalse
    Consequently, there exists a constant $c_* > 0$ such that for every $k \in \Z^d - 0$, and $i \in \set{1, \dots, d}$ we have
    \begin{equation*}
      \abs{a_i(k)} \geq \frac{1}{c_* \abs{k}^{d-1}}\,.
    \end{equation*}
  \fi
\end{lemma}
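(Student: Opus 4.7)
The plan is to realise the coefficients $a_i(k)$ as Galois conjugates of a nonzero algebraic integer, so that $\prod_i \abs{a_i(k)}$ becomes the absolute value of its field norm, and hence at least $1$. Since the characteristic polynomial $p$ of $A$ is irreducible over $\Q$, its roots $\lambda_1,\dots,\lambda_d$ are distinct algebraic integers generating the common number field $K \defeq \Q(\lambda_1)$ of degree $d$ over $\Q$, and we let $\sigma_1,\dots,\sigma_d\colon K \to \C$ be the embeddings with $\sigma_j(\lambda_1) = \lambda_j$.

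The first step is to construct a left eigenvector $u_1 \in O_K^d$ of $A$ for $\lambda_1$: the $K$-kernel of $A^\top - \lambda_1 I$ is one-dimensional, and any nonzero vector in it, scaled by a common integer denominator, lies in $O_K^d$. Applying $\sigma_j$ coordinatewise (and using that $A$ has rational entries) then produces a left eigenvector $u_j \defeq \sigma_j(u_1)$ for $\lambda_j$. Next I would choose right eigenvectors $v_j$ of $A$ for $\lambda_j$, uniquely normalised so that $u_j^\top v_i = \delta_{ij}$: the orthogonality for $i \neq j$ follows automatically from $\lambda_i \neq \lambda_j$, while $u_j^\top v_j \neq 0$ because otherwise $u_j$ would be orthogonal to every $v_i$ and hence vanish. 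Because the $\lambda_j$ are distinct, the $v_j$ form a basis of $\C^d$; expanding $k = \sum_i a_i(k) v_i$ and pairing with $u_j$ yields the explicit formula
\begin{equation*}
  a_j(k) = u_j^\top k = \sigma_j(\alpha_k),
  \qquad \text{where} \qquad
  \alpha_k \defeq u_1^\top k \in O_K \quad \text{for } k \in \Z^d.
\end{equation*}
Consequently $\prod_j \abs{a_j(k)} = \abs{N_{K/\Q}(\alpha_k)}$, and since $N_{K/\Q}(\alpha_k) \in \Z$, the bound~\eqref{e:prodAi} follows immediately once we show $\alpha_k \neq 0$ for $k \in \Z^d - 0$.

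Establishing this last step is the main obstacle, and is where condition~\ref{A2} enters crucially. I would consider the $\Q$-subspace $W \defeq \{k \in \Q^d : u_1^\top k = 0\}$. A direct computation using $A^\top u_1 = \lambda_1 u_1$ shows that $W$ is $A$-invariant, and $W \neq \Q^d$ since $u_1 \neq 0$. Because $p$ is irreducible, it coincides with the minimal polynomial of $A$; a standard argument (the minimal polynomial of $A|_W$ divides $p$, hence equals $1$ or $p$, forcing $W = 0$ or $W = \Q^d$) then shows $A$ has no proper nonzero $A$-invariant $\Q$-subspace of $\Q^d$. Hence $W = 0$, which rules out any $\Q$-linear relation among the entries of $u_1$, and so $\alpha_k \neq 0$ whenever $k \neq 0$, completing the proof.
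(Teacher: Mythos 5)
Your proof is correct, and it takes a genuinely different (and arguably cleaner) route than the paper's. The paper passes to the splitting field $F = \Q(\lambda_1,\dots,\lambda_d)$ and works with the full Galois group $\mathcal G = \mathrm{Gal}(F/\Q)$: it constructs $v_1$ in the fixed field of the stabilizer $G_1$ of $\lambda_1$, sets $v_i = \tau_i(v_1)$ for coset representatives, scales $V^{-1}$ to have algebraic-integer entries, tracks how the $a_i(k)$ transform under $\mathcal G$, and forms the quantity $p_* = \prod_{\sigma\in\mathcal G}\sigma(a_1) = \bigl(\prod_i a_i\bigr)^{|G_1|}$, which it shows is a nonzero rational integer. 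You instead stay inside the degree-$d$ field $K = \Q(\lambda_1)$ and use left eigenvectors as the dual basis: normalizing $u_j^\top v_i = \delta_{ij}$ gives $a_j(k) = \sigma_j(u_1^\top k)$ immediately, so $\prod_j |a_j(k)| = |N_{K/\Q}(\alpha_k)|$ with no exponent to divide out. Your nonvanishing argument (the $\Q$-kernel of $k\mapsto u_1^\top k$ is an $A$-invariant proper $\Q$-subspace, hence zero by irreducibility of the characteristic polynomial) is cleaner than the paper's appeal to transitivity of $\mathcal G$, though both exploit condition~\ref{A2} in the same essential way. What you lose is the explicit Galois-equivariance of the basis $\{v_i\}$, which the paper's construction provides and which could be useful elsewhere, but it is not needed for the bound~\eqref{e:prodAi} itself. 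One small point worth stating explicitly in a polished write-up: $u_1^\top k$ defines a $\Q$-linear map $\Q^d \to K$, and its kernel $W$ being a $\Q$-subspace (not just a $K$-subspace) is what makes the invariant-subspace argument apply.
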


\begin{lemma}[Kronecker~\cite{Kronecker57}]\label{l:kronecker}
  Let $p$ be a monic polynomial with integer coefficients that is irreducible over~$\Q$.
  If all the roots of~$p$ are contained in the unit disk, they must be roots of unity.
\end{lemma}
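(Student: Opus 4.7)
The plan is to prove this classical theorem of Kronecker by a pigeonhole argument on the powers of the roots. Let $\alpha_1,\dots,\alpha_d$ denote the roots of $p$ listed with multiplicity, and for each $n\geq 1$ introduce the auxiliary polynomial
\begin{equation*}
  p_n(x) \defeq \prod_{i=1}^d (x - \alpha_i^n).
\end{equation*}

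The first key point I would establish is that $p_n \in \Z[x]$ for every $n$. Indeed, the coefficients of $p_n$ are (up to sign) the elementary symmetric polynomials in $\alpha_1^n,\dots,\alpha_d^n$, and are therefore symmetric polynomials in $\alpha_1,\dots,\alpha_d$. By the fundamental theorem of symmetric polynomials they can be expressed as integer polynomials in the elementary symmetric functions of the $\alpha_i$, which are (up to sign) the coefficients of the original polynomial $p$ and hence integers.

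The second key point is that the hypothesis $|\alpha_i|\leq 1$ forces $|\alpha_i^n|\leq 1$ for every $i$ and every $n$, so the $k$-th coefficient of $p_n$ is bounded in absolute value by $\binom{d}{k}$, uniformly in $n$. Consequently $\{p_n\}_{n\geq 1}$ is a finite subset of $\Z[x]$, and by pigeonhole there is an infinite sequence $n_1 < n_2 < \cdots$ along which $p_{n_j}$ is constant. In particular, the multiset $\{\alpha_1^{n_j},\dots,\alpha_d^{n_j}\}$ is independent of $j$.

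Fixing any index $i$, the infinitely many values $\alpha_i^{n_j}$ all lie in this common finite multiset, so a second application of pigeonhole produces $j<k$ with $\alpha_i^{n_j}=\alpha_i^{n_k}$. Provided $\alpha_i\neq 0$ (the only way the irreducible polynomial $p$ could have a zero root is $p(x)=x$, a degenerate case excluded by the statement), this rearranges to $\alpha_i^{n_k-n_j}=1$, exhibiting $\alpha_i$ as a root of unity. The only mildly technical step is verifying the integrality of the coefficients of $p_n$; the rest is elementary counting, and irreducibility of $p$ is used only to rule out the edge case $p(x)=x$.
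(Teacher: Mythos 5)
Your proof is correct and follows the same pigeonhole scheme as the paper's: form an auxiliary polynomial $p_n$ from the $n$-th powers of the roots, show its coefficients are bounded integers, conclude that $\{p_n\}$ is finite, and extract periodicity. That said, your version is a bit cleaner on two technical points, which are worth recording. First, the paper defines $p_n$ as the \emph{minimal} polynomial of $\lambda_1^n$ and asserts that its Galois conjugates are "precisely" $\lambda_2^n,\dots,\lambda_d^n$; this is slightly imprecise, since after raising to the $n$-th power the conjugates need not be distinct and the degree can drop. By instead taking $p_n(x)=\prod_i(x-\alpha_i^n)$ you sidestep Galois theory entirely, getting integrality directly from the fundamental theorem of symmetric polynomials, and the argument is unaffected by any coincidences among the $\alpha_i^n$. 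Second, your final pigeonhole step shows each $\alpha_i$ is individually a root of unity, so you do not need to invoke transitivity of the Galois action at the end, as the paper does. You have also correctly flagged the degenerate case $\alpha_i=0$ (i.e.\ $p(x)=x$), which the paper's proof silently ignores as well; in the paper's application $p$ is the characteristic polynomial of a matrix in $\SL_d(\Z)$ with determinant $\pm 1$, so $0$ is automatically excluded, but it is good hygiene to note it. One small remark: your parenthetical says this case is "excluded by the statement," which isn't literally true as the lemma is phrased, but it is harmless in context.
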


The proofs of Lemma~\ref{l:diophantine} and~\ref{l:kronecker} use elementary facts about algebraic number fields, and to avoid breaking continuity, we defer the proofs to Section~\ref{s:diophantine}.
The reason these lemmas arise here is as follows.
Lemma~\ref{l:kronecker} will guarantee that~$(A^T)^{-1}$ has at least one eigenvalue, $\lambda_1$, strictly outside the unit disk.
Lemma~\ref{l:diophantine} now guarantees that all non-zero Fourier frequencies have a certain minimum component in the eigenspace of~$\lambda_1$.
This will of course dominate the long time behaviour, leading to exponential mixing of~$\varphi$ and rapid energy dissipation of the associated pulsed diffusion.

\begin{proof}[Proof of Proposition~\ref{p:catmapExpMix}]
  Let $B = (A^T)^{-1}$, and~$f \in L^2_0$.
  Observe
  \begin{equation*}
    (Uf)^\wedge(k)
      = \int_{\T^d} e^{-2\pi i k \cdot x} f(Ax) \, dx 
      = \int_{\T^d} e^{-2\pi i (B k) \cdot x} f(x) \, dx
      = \hat f( B  k )\,,
  \end{equation*}
  and hence
  \begin{equation}\label{e:UnFHatK}
    (U^n f)^\wedge(k) = \hat f( B^n k )\,,
  \end{equation}
  for all $n \geq 0$.
  Now to prove that~$\varphi$ is exponentially mixing, let $f \in \dot H^\alpha$, and $g \in \dot H^\beta$.
  Using~\eqref{e:UnFHatK} we have
  \begin{equation*}
    \ip{U^n f, g}
      = \sum_{k \in \Z^d - 0} \hat f(B^n k) \overline{\hat g(k)}
      = \sum_{k \in \Z^d - 0}
	  \frac{1}{\abs{B^n k}^\alpha \abs{k}^\beta}
	  \abs{B^n k}^\alpha \hat f(B^n k)
	  \abs{k}^\beta \overline{\hat g(k)}
  \end{equation*}
  Consequently
  \begin{equation}\label{e:unfg1}
    \abs{\ip{U^n f, g}}
      \leq
	\paren[\Big]{ \sup_{k \in \Z^d - 0} 
	  \frac{1}{\abs{B^n k}^\alpha \abs{k}^\beta}
	}
	\norm{f}_\alpha
	\norm{g}_\beta
  \end{equation}

  We now estimate the pre-factor on the right of~\eqref{e:unfg1} using Lemmas~\ref{l:diophantine} and~\ref{l:kronecker}.
  First note that~$B \in SL_d(\Z)$ also satisfies the assumptions~\ref{A1} and~\ref{A2}.
  Let $v_1$, \dots, $v_d$ be the basis given by Lemma~\ref{l:diophantine}, and $\lambda_1$, \dots, $\lambda_d$ be the corresponding eigenvalues.
  Since the characteristic polynomial of~$B$ satisfies the conditions of Lemma~\ref{l:kronecker}, we see that~$B$ has  at least one eigenvalue outside the unit disk.
  Without loss of generality we suppose~$\abs{\lambda_1} > 1$.

  By equivalence of norms on finite dimensional spaces, we know there exists $c_* > 0$ such that
  \begin{equation}\label{e:normequiv}
    \frac{1}{c_*} \abs{k'}
      \leq \paren[\Big]{ \sum \abs{a_i(k')}^2 }^{1/2}
      \leq c_* \abs{k'}\,,
      \quad\text{for all } k' \in \Z^d\,.
  \end{equation}
  Using Lemma~\ref{l:diophantine}, we note
  \begin{equation*}
    \abs{B^n k}
      = \abs[\Big]{ \sum a_i \lambda_i^n v_i }
      \geq \frac{\abs{a_1} \abs{\lambda_1}^n}{c_*}
      \geq \frac{\abs{\lambda_1}^n}{ c_* \abs{a_2} \cdots \abs{a_d} }
      \geq \frac{\abs{\lambda_1}^n}{c_*^d \abs{k}^{d-1}} \,.
  \end{equation*}
  Thus
  \begin{equation*}
    \sup_{k \in \Z^d - 0} 
      \frac{1}{\abs{B^n k}^\alpha \abs{k}^\beta}
      \leq \abs{\lambda_1}^{-n \alpha}
	\paren[\Big]{
	  \sup_{k \in \Z^d - 0} 
	  \frac{c_*^{d \alpha}}{\abs{k}^{\beta - (d - 1) \alpha} }
	}\,.
      %\leq C \abs{\lambda_1}^{-n \alpha}\,.
  \end{equation*}
  If $(d - 1) \alpha \leq \beta$, \eqref{e:unfg1} and the above shows that~$\varphi$ is strongly $\alpha$, $\beta$ mixing with rate function~$h(n) = C \abs{\lambda_1}^{-n \alpha}$.
  This proves~\eqref{e:toralMixRate} in the case $(d - 1) \alpha \leq \beta$.

  On the other hand, if $(d - 1) \alpha > \beta$, we let $\alpha' = \beta / (d - 1)$.
  By the previous argument we know $\varphi$ is $\alpha'$, $\beta$ mixing with rate function $h(n) = C \abs{\lambda_1}^{-n \alpha'}$.
  Since $\alpha > \alpha'$, $\norm{f}_{\alpha'} \leq \norm{f}_{\alpha}$ and  it immediately follows that~$\varphi$ is also $\alpha$, $\beta$ mixing with the same rate function.
  This proves~\eqref{e:toralMixRate} when $(d - 1) \alpha > \beta$ completing the proof.
\end{proof}

\begin{proof}[Proof of Proposition~\ref{p:catmapWeakMix}]
  The second assertion follows immediately from Proposition~\ref{p:catmapExpMix}.
  Indeed, when both $\alpha, \beta > 0$, Proposition~\ref{p:catmapExpMix} implies~$\varphi$ is strongly $\alpha$, $\beta$ mixing with rate function~$h$ given by~\eqref{e:toralMixRate}.
  Since the rate function decays exponentially, it is square summable and equation~\eqref{e:toralMixRateWeak} holds with $C_{\alpha, \beta} = (\sum_{i=1}^\infty  h(i)^2)^{1/2}$.

  To prove the first assertion, suppose first $\alpha = 0$ and $\beta > 0$.
  As before set $B = (A^T)^{-1}$, and let $f, g \in L^2_0$ and observe
  \begin{align}
    \nonumber
    \frac{1}{n} \sum_{i=0}^{n-1} \abs{\ip{U^i f, g}}^2
      &= \frac{1}{n} \sum_{i=0}^{n-1} \abs[\bigg]{
	  \sum_{k \in \Z^d - 0} \hat f( B^i k ) \overline{\hat g(k)}
      }^2
  \\
    \label{e:p42e1}
      &\leq \frac{\norm{g}_\beta^2}{n}
	\sum_{i=0}^{n-1}
	\sum_{k \in \Z^d - 0}
	  \frac{\abs{\hat f( B^i k )}^2}{\abs{k}^{2\beta} } \,.
  \end{align}

  We now split the analysis into cases.
  First suppose $\beta > d / 2$.
  By Kronecker's theorem (Lemma~\ref{l:kronecker}) we see that the matrix~$B$ can not have finite order, and hence $k$, $Bk$, $B^2 k$, \dots, $B^{n-1}k$ are all distinct.
  Thus~\eqref{e:p42e1} implies
  \begin{equation*}
    \frac{1}{n} \sum_{i=0}^{n-1} \abs{\ip{U^i f, g}}^2
      \leq \frac{\norm{g}_\beta^2}{n}
	\sum_{k \in \Z^d - 0}
	\sum_{i=0}^{n-1}
	  \frac{\abs{\hat f( B^i k )}^2}{\abs{k}^{2\beta} }
      \leq \frac{\norm{g}_\beta^2}{n}
	\sum_{k \in \Z^d - 0}
	  \frac{\norm{f}^2}{\abs{k}^{2\beta} }\,.
  \end{equation*}
  Since $\beta > d / 2$, the sum on the right is finite, showing~$\varphi$ is~$0$, $\beta$ mixing with rate function $C / n^{1/2}$ as desired.

  Suppose now $\beta < d / 2$.
  Let $m \in \N$ be a large integer that will be chosen shortly, and split the above sum as
  \begin{align}
    \label{e:tmp410}
    \frac{1}{n} \sum_{i=0}^{n-1} \abs{\ip{U^i f, g}}^2
      &\leq
	\frac{\norm{g}_\beta^2}{n}
	\paren[\Big]{ 
	  \sum_{0 < \abs{k} \leq m }
	    \sum_{i=0}^{n-1}
	    \frac{\abs{\hat f( B^i k )}^2}{\abs{k}^{2\beta} }
	  +
	  \sum_{i=0}^{n-1}
	    \sum_{\abs{k} > m }
	      \frac{\abs{\hat f( B^i k )}^2}{\abs{k}^{2\beta} }
	}
    \\
      \label{e:tmp411}
      &\leq
	\norm{f}^2 \norm{g}_\beta^2
	\brak[\Big]{ 
	  \paren[\Big]{\frac{1}{n}
	    \sum_{0 < \abs{k} \leq m }
	    \frac{1}{\abs{k}^{2\beta} }}
	  +
	  \frac{1}{m^{2\beta}}
	}
    \\
      \nonumber
      &\leq \norm{f}^2 \norm{g}_\beta^2 \paren[\Big]{
	\frac{C m^{d - 2\beta}}{n} + \frac{1}{m^{2\beta}}
      }\,,
  \end{align}
  for some (explicit) constant~$C = C(d)$, independent of~$n$.
  (Note, we again used the fact that $k$, $B k$, $B^2 k$, \dots, are all distinct when computing the first sum on the right of~\eqref{e:tmp410} to obtain~\eqref{e:tmp411}.)
  We now choose~$m = C n^{1/d}$ in order to minimize the right hand side.
  This implies
  \begin{equation*}
    \frac{1}{n} \sum_{i=0}^{n-1} \abs{\ip{U^i f, g}}^2
      \leq \frac{C \norm{f}^2 \norm{g}_\beta^2}{n^{2\beta/d}}
  \end{equation*}
  proving~\eqref{e:toralMixRateWeak0} when $\beta < d / 2$.

  Finally, when $\beta = d/2$ we repeat the same argument above to obtain~\eqref{e:tmp411}.
  When summed~\eqref{e:tmp411} now yields
  \begin{equation}\label{e:tmp413}
    \frac{1}{n} \sum_{i=0}^{n-1} \abs{\ip{U^i f, g}}^2
      \leq \norm{f}^2 \norm{g}_\beta^2 \paren[\Big]{
	\frac{C \ln m}{n} + \frac{1}{m^d}
      }\,,
  \end{equation}
  and choosing~$m = n^{1/d}$ yields~\eqref{e:toralMixRateWeak0} as desired.

  We have now proved~\eqref{e:toralMixRateWeak0} when $\alpha = 0$ and $\beta > 0$.
  For the case~$\alpha > 0$ and $\beta = 0$, note that $\ip{U^i f, g} = \ip{f, U^{-i} g}$.
  Thus replacing the matrix~$A$ with~$A^{-1}$ reduces the case when $\alpha > 0, \beta = 0$ to the case when $\alpha = 0$, $\beta > 0$.
  This finishes the proof.
\end{proof}

\subsection{Energy Decay, and the proof of Theorem~\ref{t:energydecay}}\label{s:energydecay}

We now turn our attention to studying the energy decay of pulsed diffusions.
Our first result shows that if a toral automorphism satisfies conditions~\ref{A1} and~\ref{A2} in Proposition~\ref{p:catmapExpMix}, then the energy of the associated pulsed diffusion decays double exponentially.
This will prove sharpness of the lower bound~\eqref{e:dexp} in Theorem~\ref{t:energydecay}.
Following this we will prove lower bound~\eqref{e:dexp} itself using a convexity argument.

\begin{proposition}\label{p:toralEnergy}
  Suppose~$A \in \SL_d(\Z)$ satisfies the assumptions~\ref{A1} and~\ref{A2} in Proposition~\ref{p:catmapExpMix}.
  Let~$\varphi$ be the associated toral automorphism defined in~\eqref{e:toralDef}, and~$\theta_n$ be the pulsed diffusion defined by~\eqref{e:pulseddiff2}.
  Then there exist constants $c > 0$ and $\gamma > 1$ such that
  \begin{equation}\label{e:dexp2}
    \norm{\theta_n} \leq \exp\paren[\Big]{ - \frac{\nu \gamma^n}{c} }
  \end{equation}
\end{proposition}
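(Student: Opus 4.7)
The plan is to work in Fourier space and reduce the estimate to a uniform lower bound on a single exponent. Taking Fourier transforms of the recursion~\eqref{e:pulseddiff2} and iterating, one obtains
\begin{equation*}
\hat\theta_n(k) = \exp\paren[\big]{-4\pi^2 \nu S_n(k)}\,\hat\theta_0(B^n k)\,,
\qquad
S_n(k) \defeq \sum_{j=0}^{n-1} \abs{B^j k}^2\,,
\end{equation*}
with $B \defeq (A^T)^{-1} \in \SL_d(\Z)$. Since $B^n$ is a bijection of $\Z^d - \set{0}$, Parseval yields
\begin{equation*}
\norm{\theta_n}^2 \leq \exp\paren[\big]{-8\pi^2\nu M_n}\,\norm{\theta_0}^2\,,
\qquad
M_n \defeq \inf_{k \in \Z^d - \set{0}} S_n(k)\,.
\end{equation*}
The whole task thus reduces to showing $M_n \geq \gamma^n/c$ for some $\gamma > 1$, which I would do by combining two complementary lower bounds on $S_n(k)$.

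The first is trivial: keeping only the $j = 0$ term gives $S_n(k) \geq \abs{k}^2$. The second uses the algebraic structure of $B$. Note first that $B$ inherits conditions \ref{A1} and \ref{A2} from $A$: its eigenvalues $\mu_i = 1/\lambda_i$ are not roots of unity, and its characteristic polynomial (the reciprocal of $A$'s) is still irreducible over $\Q$. Hence Kronecker's lemma (Lemma~\ref{l:kronecker}) forces at least one eigenvalue $\mu_1$ of $B$ to lie outside the closed unit disk. Applying Lemma~\ref{l:diophantine} to $B$ yields an eigenbasis $\set{w_1,\dots,w_d}$ of $\C^d$ in which every $k \in \Z^d - \set{0}$ decomposes as $k = \sum_i b_i(k)\,w_i$ with $\prod_i \abs{b_i(k)} \geq 1$. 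Norm equivalence in $\C^d$ gives $\abs{B^j k}^2 \geq c_1 \abs{b_1(k)}^2 \abs{\mu_1}^{2j}$, and summing in $j$ yields $S_n(k) \geq c_2 \abs{b_1(k)}^2 \abs{\mu_1}^{2n}$. Combining $\prod_i \abs{b_i(k)} \geq 1$ with the elementary upper bound $\abs{b_i(k)} \leq c_* \abs{k}$ forces $\abs{b_1(k)} \geq c_*^{-(d-1)} \abs{k}^{-(d-1)}$, which produces the ``expanding-direction'' estimate
\begin{equation*}
S_n(k) \geq \frac{c_3 \abs{\mu_1}^{2n}}{\abs{k}^{2(d-1)}}\,.
\end{equation*}

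The final step is to balance the two bounds. The quantity $\max(\abs{k}^2,\ c_3 \abs{\mu_1}^{2n}/\abs{k}^{2(d-1)})$ is minimized when the two arguments agree, i.e.\ when $\abs{k}^{2d} \sim \abs{\mu_1}^{2n}$, and its minimum value is of order $\abs{\mu_1}^{2n/d}$. Hence $M_n \geq c_4 \abs{\mu_1}^{2n/d}$ uniformly in $k$, and setting $\gamma \defeq \abs{\mu_1}^{2/d} > 1$ yields the claimed double exponential decay. The main obstacle is precisely this need for a \emph{uniform} lower bound on $S_n(k)$: the expanding-direction estimate degrades like $\abs{k}^{-2(d-1)}$ because a generic integer $k$ can have arbitrarily small component along the unstable eigenspace, and by itself is useless when $\abs{k}$ is large. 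What saves the argument is that large $\abs{k}$ supplies its own dissipation through the trivial $\abs{k}^2$ term, and interpolating between these two regimes is what converts the ``raw'' exponential base $\abs{\mu_1}^2$ into the smaller but still supercritical base $\abs{\mu_1}^{2/d}$.
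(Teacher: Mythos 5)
Your proof is correct, and it uses the same essential ingredients as the paper: the Fourier iteration formula, Kronecker's lemma (Lemma~\ref{l:kronecker}) to produce an eigenvalue off the unit circle, and the Diophantine product bound of Lemma~\ref{l:diophantine}. Where you depart is the final optimization step. The paper bounds $\sum_i\abs{a_i}^2 X_i$ (with $X_i$ a geometric sum in $\abs{\lambda_i}$) below by $d\paren[\big]{\prod_i\abs{a_i}^2 X_i}^{1/d}$ via the AM--GM inequality and then invokes $\prod_i\abs{a_i}\geq 1$ once, which yields the Mahler-measure constant $\gamma=\prod_i\max(\abs{\lambda_i},1)^{2/d}$. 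Your route instead keeps only two contributions to $S_n(k)$: the trivial $j=0$ term $\abs{k}^2$, effective for large $\abs{k}$, and a single-unstable-direction bound of order $\abs{\mu_1}^{2n}/\abs{k}^{2(d-1)}$, effective for small $\abs{k}$, and then balances them at $\abs{k}^{2d}\sim\abs{\mu_1}^{2n}$ to get the uniform lower bound. This is slightly wasteful --- it retains only one expanding eigenvalue --- so your $\gamma=\abs{\mu_1}^{2/d}$ is in general smaller than the paper's (by unimodularity of $A$, $\prod_{\abs{\lambda_i}>1}\abs{\lambda_i}=\prod_{\abs{\lambda_i}<1}\abs{\lambda_i}^{-1}\geq\abs{\mu_1}$); but both exceed $1$, so the double exponential conclusion is identical. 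Your explicit articulation of the tradeoff --- large $\abs{k}$ dissipates on its own, while small $\abs{k}$ is forced to carry an unstable component of size bounded below --- is exactly what the AM--GM step accomplishes silently in the paper, and is a good way to present the mechanism. The claim that $B$ inherits \ref{A1} and \ref{A2} is correct but deserves a sentence: $A^{-1}$ has (up to sign) the reciprocal characteristic polynomial $x^d p(1/x)$, which is irreducible over $\Q$ if and only if $p$ is, and the reciprocal of a root of unity is again a root of unity.
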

\begin{remark}
  In the proof of Proposition~\ref{p:toralEnergy} we will see that the constant~$\gamma$ can be chosen to be
  \begin{equation*}
    \gamma = \prod_{i = 1}^d \paren{\abs{\lambda_i} \varmax 1 }^{2/d}
  \end{equation*}
  where $\lambda_1$, \dots, $\lambda_d$ are the eigenvalues of~$A$.
\end{remark}
\begin{proof}
  Using~\eqref{e:UnFHatK} we see
  \begin{equation*}
    \hat \theta_{n+1}(k) = e^{-\nu \abs{k}^2} \hat \theta_n( B k )\,.
  \end{equation*}
  %Setting $A_* = A^T$ and iterating this gives
  %\begin{equation*}
  %  \hat \theta_n(A_*^n k) = \exp\paren[\Big]{ - \nu \sum_{j=1}^n \abs{A_*^j k}^2 } \hat \theta_0(k)\,,
  %\end{equation*}
  %and hence
  Setting $A_* = A^T$, iterating the above, squaring and summing in~$k$ gives
  \begin{equation}\label{e:thetan1}
    \norm{\theta_n}^2
      = \sum_{k \in \Z^d - 0}
	\exp\paren[\Big]{ - 2\nu \sum_{j=1}^n \abs{A_*^j k}^2 } \abs{\hat \theta_0(k)}^2\,.
  \end{equation}

  Observe that the matrix~$A_*$ also satisfies the conditions~\ref{A1} and~\ref{A2} in Proposition~\ref{p:catmapExpMix}.
  Let~$v_1$, \dots, $v_d$ be the basis of $\C^d$ given by Lemma~\ref{l:diophantine}, and~$\lambda_1$, \dots, $\lambda_d$ be the corresponding eigenvalues.
  Now~\eqref{e:thetan1} implies
  \begin{align}
    \nonumber
    \norm{\theta_n}^2
      &\leq \sum_{k \in \Z^d - 0}
	\exp\paren[\Big]{ - \frac{2\nu}{c_*^2} \sum_{j=1}^n \sum_{i=1}^d \abs{a_i}^2 \abs{\lambda_i}^{2j} } \abs{\hat \theta_0(k)}^2
    \\
    \nonumber
      &= \sum_{k \in \Z^d - 0}
	\exp\paren[\Big]{ - \frac{2\nu}{c_*^2} \sum_{i=1}^d \abs{a_i}^2 \paren[\Big]{\frac{\abs{\lambda_i}^{2(n+1)} - \abs{\lambda_i}^2 }{\abs{\lambda_i}^2 - 1}} }  \abs{\hat \theta_0(k)}^2
    \\
    \label{e:thetan2}
      &\leq \norm{\theta_0}^2 \sup_{k \in \Z^d - 0}
	\exp\paren[\Big]{ - \frac{2\nu}{c_*^2} \sum_{i=1}^d \abs{a_i}^2 \paren[\Big]{\frac{\abs{\lambda_i}^{2(n+1)} - \abs{\lambda_i}^2 }{\abs{\lambda_i}^2 - 1}} }\,.
  \end{align}
  where $c_*$ is the constant in~\eqref{e:normequiv}.
  
  We will now show that the last term decays double exponentially in~$n$.
  Indeed, the inequality of the means implies
  \begin{align}
    \nonumber
    \sum_{i=1}^d
      \abs{a_i}^2 \paren[\Big]{\frac{\abs{\lambda_i}^{2(n+1)} - \abs{\lambda_i}^2 }{\abs{\lambda_i}^2 - 1} }
      &\geq
	d \paren[\Big]{ \prod_{i=1}^d \abs{a_i}^2
	    \paren[\Big]{\frac{\abs{\lambda_i}^{2(n+1)} - \abs{\lambda_i}^2 }{\abs{\lambda_i}^2 - 1} } }^{1/d}
    \\
    \nonumber
      &= d \paren[\Big]{ \prod_{i=1}^d \abs{a_i}^2 }^{1/d}
	  \paren[\Big]{ \prod_{i=1}^d
	    \paren[\Big]{\frac{\abs{\lambda_i}^{2(n+1)} - \abs{\lambda_i}^2 }{\abs{\lambda_i}^2 - 1} } }^{1/d}
    \\
    \label{e:amgm1}
      &\geq d \paren[\Big]{ \prod_{i=1}^d
	    \paren[\Big]{\frac{\abs{\lambda_i}^{2(n+1)} - \abs{\lambda_i}^2 }{\abs{\lambda_i}^2 - 1} } }^{1/d}\,,
  \end{align}
  where the last inequality followed from Lemma~\ref{l:diophantine}.
  As in the proof of Proposition~\ref{p:catmapExpMix}, Lemma~\ref{l:kronecker} guarantees that $\max_i \abs{\lambda_i} > 1$.
  The right hand side of~\eqref{e:amgm1} is of order $\prod_i (\abs{\lambda_i} \varmax 1)^{2n/d}$ and substituting this in~\eqref{e:thetan2} gives~\eqref{e:dexp2} as desired.
\end{proof}

We now prove Theorem~\ref{t:energydecay}.
\begin{proof}
  Proposition~\ref{p:toralEnergy} immediately shows that the double exponential upper bound equation~\eqref{e:dexpUpper} is achieved for the desired class of toral automorphisms.
  Thus it only remains to prove the double exponential lower bound~\eqref{e:dexp}.
  For this, observe
  \begin{equation*}
    \ln \norm{\theta_{n+1}}^2 - \ln \norm{\theta_n}^2
      = \ln \paren[\Big]{ \frac{\norm{\theta_{n+1}}^2}{\norm{\theta_n}^2}}
      = \ln \paren[\Big]{ \frac{\norm{\theta_{n+1}}^2}{\norm{U\theta_n}^2}}
      = \ln \paren[\Big]{\frac{\sum_i e^{-2\nu\lambda_i}\abs{\ip{U\theta_n,e_i}}^2}{\sum_i\abs{\ip{U\theta_n,e_i}}^2}}\,,
  \end{equation*}
  where we recall that $\lambda_i$ are the eigenvalues of the Laplacian, and $e_i$'s are the corresponding eigenfunctions.
  Using concavity of the logarithm and Jensen's inequality to bound the last term on the right we obtain
  \begin{align}
    \nonumber
    \ln \norm{\theta_{n+1}}^2 - \ln \norm{\theta_n}^2
      &\geq \frac{-2\nu\sum_i\lambda_i\abs{\ip{U\theta_n,e_i}}^2}{\sum_i\abs{\ip{U\theta_n,e_i}}^2}
      = -2\nu \frac{\norm{U \theta_n}_1^2}{\norm{U\theta_n}^2}
      \\
      \label{e:lnNormThN1}
      &\geq
	 -2\nu \norm{\grad \varphi}_{L^\infty}^2
	 \frac{\norm{\theta_n}_1^2}{\norm{\theta_n}^2}\,.
  \end{align}

  We now claim
  \begin{equation}\label{e:rbound1}
    \frac{\norm{\theta_n}_1^2}{\norm{\theta_n}^2}
      \leq \norm{\grad \varphi}_{L^\infty}^{2n}
	\frac{\norm{\theta_0}_1^2}{\norm{\theta_0}^2}\,.
  \end{equation}
  Note that substituting~\eqref{e:rbound1} in~\eqref{e:lnNormThN1} and summing in~$n$ immediately implies~\eqref{e:dexp}.
  Thus to finish the proof we only need to prove~\eqref{e:rbound1}.

  For this we observe
  \begin{align*}
    \MoveEqLeft
    \frac{\norm{\theta_{n+1}}_1^2 }{\norm{\theta_{n+1}}^2} 
      - \frac{\norm{U \theta_{n}}_1^2 }{\norm{U \theta_n}^2} 
      = \frac{\norm{\theta_{n+1}}_1^2 \norm{U\theta_n}^2 - \norm{\theta_{n+1}}^2 \norm{U \theta_n}_1^2}{\norm{\theta_n}^2 \norm{U \theta_n}^2} 
    \\
      &= \frac{1}{\norm{\theta_n}^{2} \norm{U \theta_n}^{2}}
	\paren[\Big]{
	  \sum_{i,j} e^{-2\nu\lambda_i}(\lambda_i-\lambda_j)\abs{\ip{U\theta_n,e_i}}^2\abs{\ip{U\theta_n,e_j}}^2
	}
    \\
    &=\frac{1}{\norm{\theta_n}^{2} \norm{U \theta_n}^{2}}
	\begin{multlined}[t]
      \Bigl(
	  \sum_{i<j}e^{-2\nu\lambda_i}(\lambda_i-\lambda_j)\abs{\ip{U\theta_n,e_i}}^2\abs{\ip{U\theta_n,e_j}}^2
	\\
       + \sum_{i>j}e^{-2\nu\lambda_i}(\lambda_i-\lambda_j)\abs{\ip{U\theta_n,e_i}}^2\abs{\ip{U\theta_n,e_j}}^2
      \Bigr)
	\end{multlined}
    \\
    &\leq\frac{1}{\norm{\theta_n}^{2} \norm{U \theta_n}^{2}}
	\begin{multlined}[t]
      \Bigl(
	  \sum_{i<j}e^{-2\nu\lambda_i}(\lambda_i-\lambda_j)\abs{\ip{U\theta_n,e_i}}^2\abs{\ip{U\theta_n,e_j}}^2
	\\
       + \sum_{i>j}e^{-2\nu\lambda_j}(\lambda_i-\lambda_j)\abs{\ip{U\theta_n,e_i}}^2\abs{\ip{U\theta_n,e_j}}^2
      \Bigr)
	\end{multlined}
    \\
    &=0\,.
  \end{align*}
  Thus
  \begin{equation*}
    \frac{\norm{\theta_{n+1}}_1^2 }{\norm{\theta_{n+1}}^2} 
      \leq \frac{\norm{U \theta_{n}}_1^2 }{\norm{U \theta_n}^2} 
      = \frac{\norm{U \theta_{n}}_1^2 }{\norm{\theta_n}^2} 
      \leq \norm{\grad{\varphi}}_{L^\infty}^2 \frac{\norm{\theta_{n}}_1^2 }{\norm{\theta_n}^2} \,,
  \end{equation*}
  and iterating yields~\eqref{e:rbound1}.
  This finishes the proof.
\end{proof}

\subsection{Diophantine Approximation and Kronecker's Theorem}\label{s:diophantine}

We now prove Lemmas~\ref{l:diophantine} and~\ref{l:kronecker}.
The proofs rely on standard facts on algebraic number fields, and we refer the reader to the books~\cites{Marcus77} and~\cite{Ribenboim01} for a comprehensive treatment.

Before beginning the proof, we remark that a weaker version of Lemma~\ref{l:diophantine} follows directly from the Schmidt subspace~\cite{Schmidt80}.
Explicitly, the Schmidt subspace theorem guarantees that for any $\epsilon > 0$ we have
\begin{equation*}
  \abs[\Big]{ \prod_{i=1}^d a_i(k) } \geq \frac{1}{\abs{k}^\epsilon}\,,
\end{equation*}
at all integer points $k \in \Z^d$, \emph{except} on finitely many proper rational subspaces.
To use the Schmidt subspace theorem in our context we would need to handle the exceptional subspaces.
The approach taken by Fannjiang et.\ al. in~\cite{FannjiangWoowski03} is to use van der Waerdern's theorem on arithmetic progressions~\cites{Waerden27,Lukomskaya48} to  construct an equivalent minimization problem whose minimizer is guaranteed to lie outside the exceptional subspaces.
In our specific context we can directly prove the stronger bound~\eqref{e:prodAi}, and avoid using the Schmidt subspace theorem entirely.

\begin{proof}[Proof of Lemma~\ref{l:diophantine}]
  Let $p$ be the characteristic polynomial of~$A$, and $\lambda_1$, \dots, $\lambda_d$ be the roots of $p$.
  Let $F = \Q(\lambda_1, \dots, \lambda_d)$ and $\mathcal G = \operatorname{Gal}(F / \Q)$ denote the Galois group.
  Let $G_1 \subseteq \mathcal G$ be the group of field automorphisms that fix $\lambda_i$, and let
  \begin{equation*}
    F_1 = \set{x \in F \st \sigma(x) = x \ \forall \sigma \in G_1}\,,
  \end{equation*}
  be the fixed field of $G_1$.
  Since $\det( A - \lambda_1 I) = 0$, there must exist $v_1$ in the $F_1$ vector space $F_1^d$ such that $A v_1 = \lambda_1 v_1$.
  For $i \neq 1$, let $\tau_i \in \mathcal G$ be any element such that $\tau_i(\lambda_1) = \lambda_i$.
  (Since $p$ is irreducible over~$\Q$, the Galois group $\mathcal G$ acts transitively on the roots $\lambda_1, \dots, \lambda_d$, and hence such an element~$\tau_i$ must exist.)
  Now we define
  \begin{equation*}
    v_i \defeq \tau_i(v_1)\,.
  \end{equation*}

  We now view each $v_i$ as an element of $\C^d$, we let $V \in \GL_d(\C)$ be the matrix with columns $v_1$, \dots, $v_d$.
  Dividing each $v_i$ by a large integer if necessary, we may assume that each entry of $V^{-1}$ is an algebraic integer.
  We claim that $v_1$, \dots, $v_d$ is the desired basis.

  To see this we first note that the basis $\set{v_1, \dots, v_d}$ has the following property:
    if $\sigma \in \mathcal G$ is such that $\sigma(\lambda_i) = \lambda_j$, then $\sigma(v_i) = v_j$.
  Indeed, note that $\tau_j^{-1} \sigma \tau_i(\lambda_1) = \lambda_1$, and hence $\tau_j^{-1} \sigma \tau_i \in G_1$.
  Since all coordinates of the vector $v_1$ are in $F_1$, the fixed field of $G_1$, this must mean that $\tau_j^{-1} \sigma \tau_i (v_1) = v_1$.
  This implies $\sigma(v_i) = v_j$ as claimed.

  Now we show that the basis $\set{v_1, \dots, v_d}$ has the second property stated in Lemma~\ref{l:diophantine}.
  Let $k \in \Z^d - \set{0}$, choose $a_i = a_i(k) \in \C$ such that $k = \sum a_i v_i$, and define
  \begin{equation*}
    p_* \defeq \prod_{\sigma \in \mathcal G} \sigma(a_1)\,.
      %= \paren[\Big]{ \prod_{i=1}^d a_i  }^m\,,
  \end{equation*}
  Note that if $\sigma(\lambda_i) = \lambda_j$, then $\sigma(v_i) = v_j$ and hence $\sigma(a_i) = a_j$.
  %Moreover, since the groups $G_i$ are conjugate, they all have the same cardinality.
  Consequently,
  \begin{equation*}
    p_* %\defeq \prod_{\sigma \in \mathcal G} \sigma(a_1)
      = \prod_{\sigma \in G_1} \prod_{i = 1}^d \tau_i \sigma (a_1)
      = \paren[\Big]{ \prod_{i=1}^d a_i  }^m\,,
  \end{equation*}
  where $m = \abs{G_1}$.
  Thus $p_*$ is in the fixed field of $\mathcal G$, and hence must be rational.

  Further, since $a_i = (V^{-1} k) \cdot e_i$, each $a_i$ must also be an algebraic integer.
  This forces $p_*$ to be a rational algebraic integer, and hence an integer.
  By transitivity of the Galois group we see that if  $a_i = 0$ for some~$i$, then we must have $a_j = 0$ for all $j$.
  Thus $p_*$ must be a non-zero, and hence $\abs{p_*} \geq 1$.
  This proves~\eqref{e:prodAi} as desired.
\end{proof}

Lemma~\ref{l:kronecker} is due to Kronecker~\cite{Kronecker57}.
This result was improved by Stewart~\cite{Stewart78} and Dobrowolski~\cite{Dobrowolski79}.
More generally Lehmer's conjecture~\cite{Lehmer33} asserts that if $\lambda_1$, \dots, $\lambda_d$ are the roots of~$p$ and the product $\prod (1 \varmax \abs{\lambda_i})$ is smaller than an absolute constant $\mu$ (widely believed to be approximately $1.176\dots$), then each~$\lambda_i$ is a root of unity.
For our purposes, however, Kronecker's original result will suffice.
Since the proof is short and elementary, we present it below.

\begin{proof}[Proof of Lemma~\ref{l:kronecker}]
  Let $\lambda_1$, \dots, $\lambda_d$ be the roots of~$p$.
  For any $n \in \N$, let $p_n$ be the minimal monic polynomial satisfied by~$\lambda_1^n$.
  Since the Galois conjugates of $\lambda_1^n$ are precisely $\lambda_2^n$, \dots, $\lambda_d^n$, the coefficients of $p_n$ are symmetric functions of $\lambda_1^n$, \dots, $\lambda_d^n$.
  By assumption $\abs{\lambda_i} \leq 1$, which implies~$\abs{\lambda_i^n} \leq 1$, which in turn implies that the coefficients of $p_n$ are uniformly bounded as functions of $n$.
  There are only finitely many polynomials with degree at most $d$, and uniformly bounded integer coefficients.
  Thus for some infinite set $A \subseteq \N$, we must have $p_m = p_n$ for all $m, n \in A$.
  This forces the existence of one $i \in \set{1, \dots, d}$ and distinct $m, n \in \N$ such that $\lambda_i^m = \lambda_i^n$.
  Hence $\lambda_i$ is a root of unity.
  Since $\lambda_1$, \dots, $\lambda_d$ are all Galois conjugates, they must all be roots of unity.
\end{proof}
%endsection
\section{Dissipation Enhancement for the advection diffusion equation.}\label{s:continuous}
We now prove Theorems~\ref{t:disStrongCts} and~\ref{t:disWeakCts}, bounding the dissipation time in the continuous time setting.
The main idea is similar to the discrete time case.
However, in the continuous time setting the approximation of the diffusive system by the underlying dynamical system is not as good as in the discrete time setting.
This is the reason why the estimates in Theorems~\ref{t:disStrongCts} and~\ref{t:disWeakCts} are not as strong as those in Theorems~\ref{t:disStrong} and~\ref{t:disWeak}.

\subsection{The Strongly Mixing Case} 
\iffalse
\begin{lemma}\label{l:expGrow}
Let $M$ be a (smooth) closed Riemannian manifold, and $u$ be a smooth, time dependent, divergence free vector field on $M$.
Denote $\varphi_{s,t}$ as the flow map of $u$. Then for any $f\in\dot  H^1(M)$, we have
\begin{align}\label{e:expGrow}
\norm{f\circ \varphi_{s,t}}_{\dot H^1} \leq C(d) e^{\norm{\grad u}_{L^\infty}(t-s)}\norm{f}_{\dot H^1}\,.
\end{align}
\end{lemma}
 \begin{proof}[Proof of Lemma \ref{l:expGrow} ]
 Since $\varphi_{s,t}$ is the flow map of $u$, we have
 \begin{align*}
 \partial_t \paren{f\circ \varphi_{s,t}}-u(\varphi_{s,t},t)\cdot \nabla f(\varphi_{s,t})=0\,.
 \end{align*}
 This implies 
 \begin{align*}
\abs{ \varphi_{s,t}(x)-\varphi_{s,t}(y)}&=\abs{x-y+\int_s^t\paren[\Big]{u(\varphi_{s,t}(x),r)-u(\varphi_{s,t}(y),r)}\,dr}\\
 &\leq \abs{x-y}+\norm{u}_{Lip} \abs{\varphi_{s,t}(x)-\varphi_{s,t}(y)}(t-s) \,.
 \end{align*}
 Further, use Gr\"onwall to get
 \begin{align*}
 \abs{ \varphi_{s,t}(x)-\varphi_{s,t}(y)}\leq \abs{x-y}e^{\norm{u}_{Lip}(t-s)}\,.
 \end{align*}
 This tells 
 \begin{align*}
 \norm{\varphi_{s,t}}_{\dot H^1} \leq e^{\norm{u}_{Lip}(t-s)}\,.
 \end{align*}
 Then we get \eqref{e:expGrow} by using the chain rule property in Sobolev space.
 \end{proof}

Now we consider the  reaction-diffusion equation on domain $M$,
\begin{align}\label{rediff}
\partial_t \theta + u\cdot\grad \theta -\nu\Delta \theta=0,\qquad \theta(\cdot,0)=\theta_0(\cdot).
\end{align}
\fi

As in Section~\ref{s:ad}, let $\theta_{s,0}\in L_0^2(M)$, let $\theta_s(t)$ be the solution of~\eqref{e:ad}.
By the energy inequality~\eqref{e:energyCts} we know
\begin{equation*}
  \norm{\theta_s(t)}^2
    = \norm{\theta_s(s)}^2
      \exp\paren[\Big]{-2\nu \int_s^t
	\frac{\norm{\theta_s(r)}_1^2}{\norm{\theta_s(r)}^2} \, dr }\,.
\end{equation*}
Thus,~$\norm{\theta_s(t)}$ decays rapidly when the ratio $\norm{\theta_s(t)}_1/\norm{\theta_s(t)}$ remains large.
Precisely, if for some $c_0>0$, we have
\begin{align*}%\label{e:assH1Large}
\norm{\theta_s(t)}_{1}^2 \geq c_0\norm{\theta_s(t)}^2\,, \quad \text{ for all } s\leq t \leq t_0\,,
\end{align*}
then
\begin{align}\label{e:H1Large}
\norm{\theta_s(t)}^2\leq e^{-2\nu c_0(t-s)}\norm{\theta_{s,0}}^2\,, \quad \text{ for all } s\leq t \leq t_0\,.
\end{align}
As in the proof of Theorems~\ref{t:disStrong} and~\ref{t:disWeak}, we will show that if the ratio~$\norm{\theta_{s,0}}_1/\norm{\theta_{s,0}}$ is small, then the mixing properties of~$u$ will guarantee that for some later time $t_0 > s$, $\norm{\theta_s(t_0)}$ becomes sufficiently small.
This is the content of the following lemma.

\begin{lemma}\label{l:H1ByL2SmallCts}
  %Let $0 < \lambda_1 < \lambda_2 \leq\cdots$ be the eigenvalues of the Laplacian, where each eigenvalue is repeated according to its multiplicity.
  %\textcolor{red}{Let $H = \int_0^\infty h(r)^2\,dr$}, and [snip]\eqref{e:NchoiceCts} was here[/snip]
  Choose $\lambda_N$ to be the largest eigenvalue satisfying $\lambda_N\leq H_3(\nu)$ where $H_3(\nu)$ is defined in~\eqref{e:H3}. If
  \begin{equation}\label{e:H1ByL2SmallCts}
  \norm{\theta_{s,0}}_1^2
      < \lambda_N \norm{\theta_{s,0}}^2\,,
  \end{equation}
  then we have
  \begin{equation}\label{e:thetaT0Cts}
    \norm{ \theta_s(t_0) }^2 \leq \exp\paren[\Big]{- \frac{\nu H_3(\nu) (t_0-s)}{8} } \norm{\theta_{s,0}}^2\,.
  \end{equation}
  at a time $t_0$ given by
  \begin{equation*}
    t_0 \defeq s + 2h^{-1}\Big(\frac{\lambda_N^{-\sfrac{(\alpha+\beta)}{2}}}{2}\Big) \,.
  \end{equation*}
\end{lemma}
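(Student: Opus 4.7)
The strategy mirrors Lemma~\ref{l:H1ByL2Small}: use the energy equality together with a lower bound on $\norm{\theta_s(r)}_1^2$ obtained by projecting onto low Laplace eigenmodes, and control the low-mode projection using mixing. The continuous-time energy identity
\begin{equation*}
  \norm{\theta_s(t_0)}^2 = \norm{\theta_{s,0}}^2 - 2\nu \int_s^{t_0} \norm{\theta_s(r)}_1^2 \, dr
\end{equation*}
reduces everything to a lower bound on $\int_s^{t_0}\norm{\theta_s(r)}_1^2 \, dr$. Combined with the spectral inequality
\begin{equation*}
  \norm{\theta_s(r)}_1^2 \geq \lambda_N \bigl(\norm{\theta_s(r)}^2 - \norm{P_N \theta_s(r)}^2\bigr)\,,
\end{equation*}
this reduces the task to showing that $\int_{s+T}^{t_0}\norm{P_N\theta_s(r)}^2 dr$ is small, where $T \defeq (t_0-s)/2 = h^{-1}(\tfrac{1}{2}\lambda_N^{-(\alpha+\beta)/2})$.

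To bound $P_N\theta_s(r)$, I would split $\theta_s(r) = \phi_s(r) + w(r)$, where $\phi_s(r) \defeq \theta_{s,0}\circ \varphi_{s,r}^{-1}$ is the pure advection solution and $w(r)$ is the residual. For the advected piece, strong mixing gives $\norm{\phi_s(r)}_{-\alpha} \leq h(r-s)\norm{\theta_{s,0}}_\beta$ by duality, so
\begin{equation*}
  \norm{P_N\phi_s(r)}^2 \leq \lambda_N^{\alpha}\norm{\phi_s(r)}_{-\alpha}^2 \leq \lambda_N^{\alpha+\beta} h(r-s)^2 \norm{\theta_{s,0}}^2\,,
\end{equation*}
where I used interpolation and the hypothesis $\norm{\theta_{s,0}}_1^2 < \lambda_N \norm{\theta_{s,0}}^2$ to bound $\norm{\theta_{s,0}}_\beta^2$ by $\lambda_N^\beta\norm{\theta_{s,0}}^2$. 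For $r \geq s+T$ the choice of $t_0$ forces $h(r-s) \leq \tfrac{1}{2}\lambda_N^{-(\alpha+\beta)/2}$, hence $\norm{P_N\phi_s(r)}^2 \leq \tfrac{1}{4}\norm{\theta_{s,0}}^2$.

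For the residual, $w$ solves $\partial_t w + u\cdot\nabla w - \nu\Delta w = \nu\Delta\phi_s$ with $w(s)=0$, so the standard $L^2$ energy estimate yields $\norm{w(r)}^2 \leq \nu\int_s^r \norm{\nabla\phi_s(r')}^2 dr'$. Gronwall applied to the flow map gives $\norm{\nabla\phi_s(r)} \leq e^{\norm{\nabla u}_{L^\infty}(r-s)}\norm{\theta_{s,0}}_1$, and combining with the hypothesis $\norm{\theta_{s,0}}_1^2 < \lambda_N \norm{\theta_{s,0}}^2$ produces a bound of the form
\begin{equation*}
  \norm{w(r)}^2 \leq \frac{\nu \lambda_N \, e^{2\norm{\nabla u}_{L^\infty}(t_0-s)}}{2\norm{\nabla u}_{L^\infty}}\norm{\theta_{s,0}}^2\,.
\end{equation*}
The defining condition $\lambda_N \leq H_3(\nu)$ is precisely calibrated so that this quantity is controlled: rewriting $t_0-s = 2\tau$, the inequality from~\eqref{e:H3} gives $\nu\lambda_N e^{4\norm{\nabla u}_{L^\infty}\tau} \leq \tfrac{1}{2}\tau\norm{\nabla u}_{L^\infty}^2$, which feeds back into the residual estimate.

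Finally I would assemble the pieces: substituting the bounds on $\norm{P_N\phi_s(r)}^2$ and $\norm{w(r)}^2$ into $\norm{P_N\theta_s(r)}^2 \leq 2\norm{P_N\phi_s(r)}^2 + 2\norm{w(r)}^2$, integrating over $[s+T,t_0]$, and using $\lambda_N \geq \tfrac{1}{2}H_3(\nu)$ (by Weyl's law, as in the discrete case), then plugging into the energy identity and applying $1-x \leq e^{-x}$ yields~\eqref{e:thetaT0Cts}. The main obstacle, and the reason the continuous-time bound is strictly weaker than its discrete counterpart, is the approximation of $\theta_s$ by pure advection: because diffusion acts continuously rather than in discrete pulses, the error $w$ is only controlled up to the exponential growth factor $e^{2\norm{\nabla u}_{L^\infty}(t_0-s)}$ of $\norm{\nabla\phi_s}$. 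This is exactly what necessitates the somewhat opaque form of $H_3(\nu)$, with the $\exp(4\norm{\nabla u}_{L^\infty} h^{-1}(\cdot))$ factor absorbing this exponential.
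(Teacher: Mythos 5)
Your proposal is correct and follows essentially the same route as the paper: energy identity, spectral projection to reduce to $\norm{P_N\cdot}$, split into the pure-advection solution $\phi_s$ plus a residual $w$, control $P_N\phi_s$ via strong mixing and $w$ via the energy estimate of Lemma~\ref{l:l2diffcts}, and calibrate $\lambda_N$, $t_0$ through the definition~\eqref{e:H3}. Two small notational slips worth flagging: the duality form of~\eqref{e:hmixrateStrongCts} is $\norm{\phi_s(r)}_{-\beta}\leq h(r-s)\norm{\theta_{s,0}}_\alpha$, so the display $\norm{P_N\phi_s(r)}^2\leq \lambda_N^{\beta}\norm{\phi_s(r)}_{-\beta}^2$ should carry $\beta$ rather than $\alpha$, and then Sobolev interpolation on $\norm{\theta_{s,0}}_\alpha$ together with~\eqref{e:H1ByL2SmallCts} supplies the $\lambda_N^\alpha$ factor; also, the paper lower-bounds $\norm{(I-P_N)\theta_s(r)}^2\geq\tfrac12\norm{(I-P_N)\phi_s(r)}^2-\norm{\theta_s(r)-\phi_s(r)}^2$ and then uses $\norm{\phi_s(r)}^2=\norm{\theta_{s,0}}^2$ (unitarity of pure advection), which sidesteps the implicit need in your spectral-inequality step to separately lower-bound $\norm{\theta_s(r)}^2$ — as written your reduction to ``$\int\norm{P_N\theta_s(r)}^2$ is small'' quietly requires that lower bound, which is easy to supply but should be said.
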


Momentarily postponing the proof of Lemma~\ref{l:H1ByL2SmallCts}, we prove Theorem~\ref{t:disStrongCts}.

\begin{proof}[Proof of Theorem~\ref{t:disStrongCts}]
  Choosing $c_0=\lambda_N$ and repeatedly applying the inequality~\eqref{e:H1Large} and Lemma~\ref{l:H1ByL2SmallCts}, we obtain an increasing sequence of times $(t'_k)$, such that
\begin{align*}
\norm{\theta_s(t_k')}^2 \leq \exp \Big(-\frac{\nu H_3(\nu) (t_k'-s)}{8}\Big)\norm{\theta_{s,0}}^2\,, \text{ and } t_{k+1}'-t_k'\leq t_0\,.
\end{align*} 
This immediately implies
\begin{align}\label{e:tmpCts5}
\tau_d \leq \frac{16}{\nu H_3(\nu)}+(t_0 -s )\,.
\end{align}
By choice of $\lambda_N$ and $t_0$, we know that $t_0 - s\leq \sfrac{1}{(\nu \lambda_N)} \leq \sfrac{2}{(\nu H_3(\nu))}$ for $\nu$ sufficiently small.
The last inequality followed from Weyl's lemma as in the proof Theorem~\ref{t:disStrong} (equation~\eqref{e:lambdahnu}).
This proves~\eqref{e:taudStrongCts} as desired.
\end{proof}

We now compute~$H_3$ explicitly when the mixing rate function decays exponentially, or polynomially.
\begin{proof}[Proof of Corollary~\ref{c:SMixExp}]
  Suppose first the mixing rate function~$h$ satisfies the power law~\eqref{e:powerh}.
  In this case the inverse is given by $h^{-1}(t)=(\sfrac{c}{t})^{1/p}$.
  Thus, by definition of~$H_3$ (in~\eqref{e:H3}), we have
\begin{align*}
\exp\paren[\Big]{2^{(2p+1)/p}c^{1/p}\norm{\nabla u}_{L^\infty}H_3(\nu)^{\frac{\alpha+\beta}{2p}}} = \frac{(2c)^{1/p}\norm{\nabla u}_{L^\infty}^2}{2\nu} H_3(\nu)^{\frac{\alpha+\beta-2p}{2p}}\,.
\end{align*}
Since $H_3(\nu) \to \infty$ as $\nu \to 0$, the above forces
\begin{align*}
H_3(\nu) \approx C\abs{\ln \nu}^{\frac{2p}{\alpha+\beta}}\,,
\end{align*}
asymptotically as $\nu \to 0$, for some constant $C=C(c,p,\alpha,\beta,\norm{\nabla u}_{L^\infty})$.
Using this in~\eqref{e:taudStrongCts} yields~\eqref{e:SMixExpPower} as desired.

Suppose now the rate function~$h$ is the exponential~\eqref{e:exp}.
Then we see $h^{-1}(t)= \sfrac{(\ln c_1 - \ln t)}{c_2}$.
By the definition of ~$H_3$ in~\eqref{e:H3}, we have
\begin{multline*}
H_3(\nu)\exp\paren[\Big]{ \frac{4\norm{\nabla u}_{L^\infty}}{c_2}
  \paren[\Big]{
    \ln (2 c_1)
    +\frac{\alpha+\beta}{2} \ln H_3(\nu)
  }}\\
= \frac{\norm{\nabla u}_{L^\infty}^2}{2\nu c_2}\paren[\Big]{\ln (2 c_1) + \frac{\alpha+\beta}{2}\ln H_3(\nu)}\,.
\end{multline*}
Taking the logarithm of both sides shows
\begin{align*}
  H_3(\nu)
    = O\paren[\Big]{
	%\nu^{-\frac{c_2}{2(\alpha+\beta)\norm{\nabla u}_{L^\infty}+c_2}}
	\frac{1}{\nu^{1 - \delta}}
      }
\,,
\end{align*}
asymptotically as $\nu \to 0$, where $\delta$ is defined in~\eqref{e:SMixExpExp}.
Substituting this in~\eqref{e:taudStrongCts} yields~\eqref{e:SMixExpExp} as desired.
\end{proof}

It remains to prove Lemma~\ref{l:H1ByL2SmallCts}.
For this we will need a standard result estimating the difference between~$\theta$ and solutions to the inviscid transport equation.
\begin{lemma}\label{l:l2diffcts}
Let $\phi_s$, defined by 
\begin{align*}
\phi_s=\theta_{s,0}\circ \varphi_{s,t}\,,
\end{align*}
be the evolution of $\theta_{s,0}$ under the dynamical system generated by $\varphi_{s,t}$. If $\theta_{s,0}\in \dot H^1(M)$, then for all $t\geq s$, we have 
\begin{align}\label{e:thetaphidiff}
\norm{\theta_s(t)-\phi_s(t)}^2 \leq  \frac{\nu}{2 \norm{\grad u}_{L^\infty}} \, \exp\paren[\big]{ {2\norm{\nabla u}_{L^\infty}(t-s) }} \, \norm{\theta_{s,0}}_{1}^2\,.
\end{align}
\end{lemma}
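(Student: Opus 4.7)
The plan is to set $w = \theta_s - \phi_s$ and derive a Duhamel-style energy estimate driven by the mismatch between the transport equation (satisfied by $\phi_s$) and the advection--diffusion equation (satisfied by $\theta_s$). Since $\phi_s$ solves $\partial_t \phi_s + u \cdot \nabla \phi_s = 0$ with $\phi_s(s) = \theta_{s,0}$, subtracting from \eqref{e:ad} gives
\begin{equation*}
  \partial_t w + u \cdot \nabla w - \nu \lap w = \nu \lap \phi_s\,,
  \qquad w(s) = 0\,.
\end{equation*}
First I would take the $L^2$ inner product with $w$, use the divergence-free condition on $u$ to eliminate the transport term, integrate by parts on the right, and apply Cauchy--Schwarz together with Young's inequality to absorb the $\nu \norm{w}_1^2$ term on the left. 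This yields the differential inequality
\begin{equation*}
  \partial_t \norm{w}^2 \leq \nu \norm{\phi_s}_1^2\,,
\end{equation*}
and hence, since $w(s)=0$, the bound $\norm{w(t)}^2 \leq \nu \int_s^t \norm{\phi_s(r)}_1^2 \, dr$.

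Next I would control $\norm{\phi_s(r)}_1$ using the chain rule and volume preservation. Writing $\nabla \phi_s(r) = (\nabla \varphi_{s,r})^T (\nabla \theta_{s,0}) \circ \varphi_{s,r}$ and changing variables via the volume-preserving diffeomorphism $\varphi_{s,r}$ gives
\begin{equation*}
  \norm{\phi_s(r)}_1^2 \leq \norm{\nabla \varphi_{s,r}}_{L^\infty}^2 \norm{\theta_{s,0}}_1^2\,.
\end{equation*}
A standard Gr\"onwall argument applied to $\partial_t (\nabla \varphi_{s,t}) = (\nabla u)(\varphi_{s,t}, t) \, \nabla \varphi_{s,t}$ then yields the exponential bound $\norm{\nabla \varphi_{s,r}}_{L^\infty} \leq \exp(\norm{\nabla u}_{L^\infty}(r - s))$.

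Combining these two ingredients and evaluating the resulting exponential integral gives
\begin{equation*}
  \norm{w(t)}^2
    \leq \nu \norm{\theta_{s,0}}_1^2 \int_s^t e^{2 \norm{\nabla u}_{L^\infty}(r - s)} \, dr
    \leq \frac{\nu}{2 \norm{\nabla u}_{L^\infty}} \, e^{2 \norm{\nabla u}_{L^\infty}(t - s)} \norm{\theta_{s,0}}_1^2\,,
\end{equation*}
which is exactly \eqref{e:thetaphidiff}. I do not anticipate a real obstacle here: the only mildly delicate point is keeping track of the constant in Young's inequality so that the $\nu \norm{w}_1^2$ term is fully absorbed (rather than halved), which is what allows the right-hand side to be driven purely by $\norm{\phi_s}_1^2$ and ultimately by $\norm{\theta_{s,0}}_1^2$ via the Lipschitz growth of the flow.
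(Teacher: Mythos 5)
Your proposal is correct and follows the paper's proof essentially verbatim: subtract the equations, take the $L^2$ pairing with $w$, integrate by parts and apply Young's inequality to obtain $\partial_t \norm{w}^2 \leq \nu \norm{\phi_s}_1^2$, control $\norm{\phi_s(r)}_1$ by $\exp(\norm{\nabla u}_{L^\infty}(r-s))\norm{\theta_{s,0}}_1$ via the Lipschitz growth of the flow, and integrate in time. The only cosmetic difference is that you spell out the chain-rule/Gr\"onwall mechanism behind the $\norm{\phi_s}_1$ bound, which the paper states directly; and note that the standard $\tfrac12$--$\tfrac12$ split in Young's inequality already yields $\partial_t \norm{w}^2 \leq \nu \norm{\phi_s}_1^2$ after dropping the nonnegative leftover $\tfrac{\nu}{2}\norm{w}_1^2$, so there is nothing delicate to worry about there.
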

\begin{proof}
Let $w(t) = \theta_s(t)-\phi_s(t)$.
Note $w(s) = 0$, and for $t \geq s$ we have
\begin{align*}
\partial_t w +u \cdot \nabla  w -\nu \Delta  w  =\nu \lap \phi_s\,.
\end{align*}
Multiplying both sides by $w$ and integrating over~$M$ gives
\begin{align*}
\frac{1}{2}\partial_t \norm{w}^2+\nu\norm{w}_{1}^2
  =\nu \int_M w \Delta \phi_s \,dx
\leq \frac{\nu}{2}\norm{w}_{1}^2+\frac{\nu}{2}\norm{\phi_s}_{1}^2\,,
\end{align*}
and hence
\begin{align}\label{e:tmpcts1}
\partial_t \norm{w}^2 \leq \nu\norm{\phi_s}_{1}^2\,.
\end{align}
Since $\phi_s(t) = \theta_{s, 0} \circ \varphi_{s, t}$ we know
\begin{align*}
\norm{\phi_s(t)}_{1}
  \leq
    \exp\paren[\big]{{\norm{\nabla u}_{L^\infty}(t-s)}}
    \norm{\theta_{s,0}}_{1}\,.
\end{align*}
Substituting this into~\eqref{e:tmpcts1} and integrating in time yields~\eqref{e:thetaphidiff} as claimed.
\end{proof}

We can now prove Lemma~\ref{l:H1ByL2SmallCts}.

\begin{proof}[Proof of Lemma \ref{l:H1ByL2SmallCts}]
Integrating the energy equality~\eqref{e:energyCts} gives
\begin{align}\label{e:energyCts2}
\norm{\theta_s(t_0)}^2 = \norm{\theta_{s,0}}^2-2\nu\int_s^{t_0}\norm{\theta_s(r)}_1^2\,dr\,.
\end{align}
We claim that our choice of $\lambda_N$ and $t_0$ will guarantee
\begin{align}\label{e:lowerbound}
\int_s^{t_0}\norm{\theta_s(r)}_1^2\,dr \geq \frac{\lambda_N(t_0-s)\norm{\theta_{s,0}}^2}{8}\,.
\end{align}
This immediately yields~\eqref{e:thetaT0Cts} since when $\nu$ is small enough, we have~$\frac{1}{2}H_3(\nu)\leq \lambda_N\leq H_3(\nu)$. And so to finish the proof we only have to prove~\eqref{e:lowerbound}.

Note first
\begin{align}\label{e:tmpcts2}
\nonumber
\int_s^{t_0}\norm{\theta_s(r)}_1^2\,dr & \geq  \lambda_N\int_{\frac{t_0+s}{2}}^{t_0}\norm{(I-P_N)\theta_s(r)}^2\,dr\\
\nonumber
& \geq \frac{\lambda_N}{2}\int_{\frac{t_0+s}{2}}^{t_0}\norm{(I-P_N)\phi_s(r)}^2\,dr\\
\nonumber
&\qquad -\lambda_N\int_{\frac{t_0+s}{2}}^{t_0}\norm{(I-P_N)\paren[\big]{\theta_s(r)-\phi_s(r)}}^2\,dr\\
&\geq \frac{\lambda_N(t_0-s)}{4}\norm{\theta_{s,0}}^2-\frac{\lambda_N}{2}\int_{\frac{t_0+s}{2}}^{t_0}\norm{P_N \phi_s(r)}^2\,dr\\
\nonumber
&\qquad -\lambda_N\int_{\frac{t_0+s}{2}}^{t_0}\norm{\theta_s(r)-\phi_s(r)}^2\,dr\,.
\end{align}
We will now bound the last two terms in~\eqref{e:tmpcts2}.
For the second term, note the strong mixing assumption~\eqref{e:hmixrateStrongCts} gives
\begin{align}
  \nonumber
\int_{\frac{t_0+s}{2}}^{t_0}\norm{P_N\phi_s(r)}^2\,dr \leq \lambda_N^\beta\int_{\frac{t_0+s}{2}}^{t_0}\norm{\phi_s(r)}_{-\beta}^2\,dr\leq \lambda_N^\beta \int_{\frac{t_0+s}{2}}^{t_0}h(r-s)^2 \norm{\theta_{s,0}}_{\alpha}^2\,dr\\
  \label{e:PnBound}
 \leq \frac{t_0-s}{2}\lambda_N^{\beta}h\Big(\frac{t_0-s}{2}\Big)^2\norm{\theta_{s,0}}_\alpha^2 \leq \frac{t_0-s}{2}\lambda_N^{\beta}h\Big(\frac{t_0-s}{2}\Big)^2 \norm{\theta_{s,0}}^{2-2\alpha}\norm{\theta_{s,0}}_1^{2\alpha}\,.
\end{align}
Using the assumption \eqref{e:H1ByL2SmallCts}, we obtain
\begin{align}\label{e:tmpCts3}
\int_{\frac{t_0+s}{2}}^{t_0}\norm{P_N\phi_s(r)}^2\,dr& \leq \frac{t_0-s}{2}\lambda_N^{\alpha+\beta}h\Big(\frac{t_0-s}{2}\Big)^2 \norm{\theta_{s,0}}^2\,.
\end{align}

Now we bound the last term in~\eqref{e:tmpcts2}.
Using Lemma~\ref{l:l2diffcts} we obtain
\begin{align}\label{e:tmpCts4}
  \nonumber
  \int_{\frac{t_0+s}{2}}^{t_0}\norm{\theta_s(r)-\phi_s(r)}^2\,dr &
    \leq \frac{\nu}{4 \norm{\grad u}_{L^\infty}^2} e^{2\norm{\nabla u}_{L^\infty}(t_0-s)}\norm{\theta_{s,0}}_1^2\\
  & \leq \frac{\nu \lambda_N}{4 \norm{\grad u}_{L^\infty}^2} e^{2\norm{\nabla u}_{L^\infty}(t_0-s)}\norm{\theta_{s,0}}^2\,.
\end{align}

Substituting~\eqref{e:tmpCts3} and~\eqref{e:tmpCts4} into~\eqref{e:tmpcts2} gives
\begin{equation*}
\int_s^{t_0}\norm{\theta_s(r)}_1^2\,dr
\geq \lambda_N(t_0-s)\norm{\theta_{s,0}}^2\Big(\frac{1}{4}-\frac{\lambda_N^{\alpha+\beta}}{4}h\Big(\frac{t_0-s}{2}\Big)^2-\frac{\nu\lambda_N e^{2\norm{\nabla u}_{L^\infty}(t_0-s)}}{4 \norm{\grad u}_{L^\infty}^2 (t_0-s)}\Big)
\end{equation*}
By our choice of~$\lambda_N$ and $t_0$,  we have
\begin{align*}
\frac{\lambda_N^{\alpha+\beta}}{4}h\Big(\frac{t_0-s}{2}\Big)^2\leq \frac{1}{16}\,,
\qquad\text{and}\qquad
\frac{\nu\lambda_N e^{2\norm{\nabla u}_{L^\infty}(t_0-s)}}{4 \norm{\grad u}_{L^\infty}^2 (t_0-s)}\leq \frac{1}{16}\,,
\end{align*}
from which~\eqref{e:lowerbound} follows.
This finishes the proof of Lemma~\ref{l:H1ByL2SmallCts}.
\end{proof}

\subsection{The Weakly Mixing Case.}\label{s:weakMixCts}

We now turn our attention to Theorem~\ref{t:disWeakCts}.
The proof is similar to the proof of Theorem~\ref{t:disStrongCts}.
The main difference is that the analog of Lemma~\ref{l:H1ByL2SmallCts} is weaker.
\begin{lemma}\label{l:ctsWeakH1small}
Let $\lambda_N$ to be the largest eigenvalue of $-\lap$ such that $\lambda_N\leq H_4(\nu)$, where we recall that the function $H_4$ is defined in~\eqref{e:H4}. If
\begin{align}
\norm{\theta_{s,0}}_1^2< \lambda_N \norm{\theta_{s,0}}^2\,,
\end{align}
then we have
\begin{align}
\norm{\theta_s(t_0)}^2\leq \exp\paren[\Big]{-\frac{\nu H_4(\nu)(t_0-s)}{8}}\norm{\theta_{s,0}}^2\,,
\end{align}
at a time $t_0$ given by
\begin{equation*}
  t_0=s + 2h^{-1} \paren[\Big]{ \frac{1}{2\sqrt {\tilde c}}\lambda_N^{-\sfrac{(d+2\alpha+2\beta)}{4}}}\,.
\end{equation*}
\end{lemma}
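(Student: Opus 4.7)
The plan is to mimic the proof of Lemma~\ref{l:H1ByL2SmallCts} line-by-line, with the single essential modification being that the strong-mixing estimate on the low-frequency component of the inviscid transport~$\phi_s = \theta_{s,0}\circ \varphi_{s,r}$ must be replaced by a time-averaged estimate based on the weak-mixing hypothesis~\eqref{e:hmixrateWeakCts}. In analogy with the passage from Lemma~\ref{l:H1ByL2Small} to Lemma~\ref{l:H1ByL2SmallWeak} in the pulsed-diffusion setting, this changeover forces the introduction of the factor $N\lesssim \tilde c\,\lambda_N^{d/2}$ coming from Weyl's law, which is precisely what shifts the exponent on~$\lambda_N$ from~$(\alpha+\beta)/2$ to~$(d+2\alpha+2\beta)/4$ and therefore dictates the definition of~$H_4$ and the chosen~$t_0$.

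Concretely, I would integrate the energy identity~\eqref{e:energyCts2} and reduce the problem to establishing the same lower bound
\begin{equation*}
  \int_s^{t_0} \norm{\theta_s(r)}_1^2\,dr \geq \frac{\lambda_N(t_0-s)}{8}\,\norm{\theta_{s,0}}^2
\end{equation*}
as in~\eqref{e:lowerbound}; combined with Weyl's law (giving $\lambda_N\geq H_4(\nu)/2$ for small~$\nu$), this immediately yields the claimed double-exponential factor. Splitting via the spectral projection~$P_N$ on the subinterval $[(t_0+s)/2,\,t_0]$ and using $\phi_s$ as a comparison produces the same three-term bound as~\eqref{e:tmpcts2}, with leading term~$\lambda_N(t_0-s)\norm{\theta_{s,0}}^2/4$. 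The $\norm{\theta_s-\phi_s}^2$ contribution is handled verbatim by Lemma~\ref{l:l2diffcts}, and by the definition of $H_4$ it will again be at most $\lambda_N(t_0-s)\norm{\theta_{s,0}}^2/16$.

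The genuinely new estimate is on $\int\norm{P_N\phi_s(r)}^2\,dr$. Expanding the projection in the Laplacian eigenbasis, the weak-mixing hypothesis~\eqref{e:hmixrateWeakCts} applied with $f=\theta_{s,0}$, $g=e_l$ on the interval $[s,t_0]$ yields
\begin{equation*}
  \int_s^{t_0}\abs{\ip{\phi_s(r),e_l}}^2\,dr \leq (t_0-s)\,h(t_0-s)^2\,\norm{\theta_{s,0}}_\alpha^2\,\lambda_l^\beta\,.
\end{equation*}
Summing over~$l\leq N$, then applying Weyl's law through the constant~$\tilde c$ (so that $N\leq \tilde c \lambda_N^{d/2}$), the interpolation inequality, and the standing hypothesis $\norm{\theta_{s,0}}_1^2<\lambda_N\norm{\theta_{s,0}}^2$ to absorb $\norm{\theta_{s,0}}_\alpha^2\leq \lambda_N^\alpha\norm{\theta_{s,0}}^2$, produces
\begin{equation*}
  \int_s^{t_0}\norm{P_N\phi_s(r)}^2\,dr \leq \tilde c\,(t_0-s)\,h(t_0-s)^2\,\lambda_N^{(d+2\alpha+2\beta)/2}\,\norm{\theta_{s,0}}^2\,.
\end{equation*}
Since $h$ is decreasing, $h(t_0-s)\leq h((t_0-s)/2)=\frac{1}{2\sqrt{\tilde c}}\lambda_N^{-(d+2\alpha+2\beta)/4}$ by the very choice of $t_0$, and the above is at most $\tfrac{1}{4}(t_0-s)\norm{\theta_{s,0}}^2$. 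Combined with the $1/16$ bound on the inviscid-diffusive difference term, this closes the estimate.

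The only non-routine step is matching the two error thresholds against the defining inequalities of $H_4$ and $t_0$; the computation is entirely parallel to the weakly-mixing discrete case (Lemma~\ref{l:H1ByL2SmallWeak}), the Weyl-law bookkeeping of~$N$ being the one place where the continuous-time proof departs from the strongly-mixing template of Lemma~\ref{l:H1ByL2SmallCts}.
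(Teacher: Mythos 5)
Your proof follows the same overall strategy as the paper: reduce to the lower bound~\eqref{e:lowerbound}, reuse the three-term decomposition~\eqref{e:tmpcts2} and the inviscid-comparison estimate of Lemma~\ref{l:l2diffcts}, and replace the pointwise strong-mixing bound on $\norm{P_N\phi_s}$ with the time-averaged weak-mixing bound~\eqref{e:hmixrateWeakCts}, with Weyl's law supplying the factor $N\lesssim\tilde c\lambda_N^{d/2}$ that changes the exponent on $\lambda_N$.

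One constant-tracking point is worth flagging. You apply~\eqref{e:hmixrateWeakCts} over the full interval $[s,t_0]$, which is the natural and fully justified application (since $\phi_s(r)=\theta_{s,0}\circ\varphi_{s,r}$, the averaging must start at $s$). The paper instead writes the bound over $[(t_0+s)/2,t_0]$ with the shorter interval length $\tfrac{t_0-s}{2}$ and $h(\tfrac{t_0-s}{2})$ in~\eqref{e:tmpCtsWeak1}; that step is not a direct consequence of~\eqref{e:hmixrateWeakCts} because restricting to the second half-interval would require comparing with $\theta_{s,0}\circ\varphi_{s,(t_0+s)/2}$ and incur a Gr\"onwall-type growth factor. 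Your version is thus cleaner, but it yields $\int_{(t_0+s)/2}^{t_0}\norm{P_N\phi_s}^2\leq\int_s^{t_0}\norm{P_N\phi_s}^2\leq\tfrac{t_0-s}{4}\norm{\theta_{s,0}}^2$, hence a contribution $\tfrac{\lambda_N(t_0-s)}{8}\norm{\theta_{s,0}}^2$ to the middle term of~\eqref{e:tmpcts2} (not $\tfrac{\lambda_N(t_0-s)}{16}$). Combined with your correct $\tfrac{\lambda_N(t_0-s)}{16}$ bound on the diffusive-difference term, this gives $\int_s^{t_0}\norm{\theta_s(r)}_1^2\,dr\geq\tfrac{\lambda_N(t_0-s)}{16}\norm{\theta_{s,0}}^2$ rather than the $\tfrac{\lambda_N(t_0-s)}{8}$ of~\eqref{e:lowerbound}, and therefore $\norm{\theta_s(t_0)}^2\leq e^{-\nu H_4(\nu)(t_0-s)/16}\norm{\theta_{s,0}}^2$ rather than the $/8$ in the lemma statement. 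This is a benign factor of two: you can either tighten the definition of $t_0$ (demanding $h((t_0-s)/2)\leq\tfrac{1}{2\sqrt{2\tilde c}}\lambda_N^{-(d+2\alpha+2\beta)/4}$) or absorb it into the universal constant $C$ in Theorem~\ref{t:disWeakCts}, but as written your argument does not literally reproduce the stated exponent.
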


\begin{proof}[Proof of Theorem~\ref{t:disWeakCts}]
Given Lemma~\ref{l:ctsWeakH1small}, the proof of Theorem~\ref{t:disWeakCts} is identical to that of Theorem~\ref{t:disStrongCts}.
\end{proof}

As before, the proof of Corollary~\ref{c:ctsweak} only involves computing $H_4$ explicitly when the mixing rate function decays polynomially.
\begin{proof}[Proof of Corollary~\ref{c:ctsweak}]
  When the mixing rate function~$h$ is given by the power law~\eqref{e:powerh}, we compute $h^{-1}(t)=(c/t)^{1/p}$.
  By the definition of~$H_4$ (equation~\eqref{e:H4}), we have
\begin{multline*}
\exp\paren[\Big]{2^{(2p+1)/p}\norm{\nabla u}_{L^\infty}(c\sqrt{\tilde c})^{1/p}H_4(\nu)^{\frac{2\alpha+2\beta+d}{4p}}}\\
=
\frac{\norm{\nabla u}_{L^\infty}^2(2c\sqrt {\tilde c})^{1/p}}{2\nu} H_4(\nu)^{\frac{2\alpha+2\beta+d-4p}{4p}}\,.
\end{multline*}
Taking the logarithm shows
\begin{equation*}
  H_4(\nu) = O\paren[\Big]{H_4(\nu)\sim C\abs{\ln \nu}^{\frac{4p}{2\alpha+2\beta+d}}}
\end{equation*}
asymptotically as $\nu \to 0$.
Substituting this in~\eqref{e:taudStrongCtsWeak} yields~\eqref{e:ctsweak} as desired.
\end{proof}

\begin{proof}[Proof of Lemma~\ref{l:ctsWeakH1small}]
  Following the proof of Lemma~\ref{l:H1ByL2SmallCts}, we claim that~\eqref{e:lowerbound} still holds in our case, provided $\lambda_N$ and $t_0$ are chosen correctly.
  Indeed, note that~\eqref{e:tmpcts2} and~\eqref{e:tmpCts4} still hold, and the only difference here is that we need to bound the second term in~\eqref{e:tmpcts2} using the weak mixing assumption.
  Explicitly,~\eqref{e:hmixrateWeakCts} gives
  \begin{align}\label{e:tmpCtsWeak1}
    \nonumber
    \int_{\frac{t_0+s}{2}}^{t_0}\norm{P_N \phi_s(r)}^2\,dr &\leq \int_{\frac{t_0+s}{2}}^{t_0}\sum_{l=1}^N\abs{\ip{\phi_s(r),e_l}}^2\,dr 
    \\ 
    \nonumber
    &\leq \sum_{l=1}^N\frac{t_0-s}{2}h\Big(\frac{t_0-s}{2}\Big)^2\norm{\phi_s(0)}_{\alpha}^2\lambda_l^\beta
    \\
    \nonumber
    &
    \leq \frac{N(t_0-s)}{2}h \paren[\Big]{ \frac{t_0-s}{2}}^2\lambda_N^\beta\norm{\phi_{s,0}}_\alpha^2
    \\
    \nonumber
    &\leq \frac{N(t_0-s)}{2}h\Big(\frac{t_0-s}{2}\Big)^2\lambda_N^{\alpha+\beta}\norm{\theta_{s,0}}^2
    \\
    &\leq \frac{\tilde c (t_0-s)}{2}h\Big(\frac{t_0-s}{2}\Big)^2\lambda_N^{(d+2\alpha+2\beta)/2}\norm{\theta_{s,0}}^2\,.
  \end{align}
  Here the last inequality follows from the fact that our choice of~$\tilde c$ (in Remark~\ref{r:ctilde}) guarantees
  \begin{equation*}
    \frac{\tilde c \lambda_N^{d/2}}{2}
      \leq N
      \leq \tilde c \lambda_N^{d/2} \,,
  \end{equation*}
  for all $N$ sufficiently large.

Substituting~\eqref{e:tmpCts4} and~\eqref{e:tmpCtsWeak1} into~\eqref{e:tmpcts2}, we obtain
\begin{align*}
&\int_s^{t_0}\norm{\theta_s(r)}_1^2\,dr \\
&\quad 
\geq \frac{\lambda_N(t_0-s)\norm{\theta_{s,0}}^2}{4}
  \paren[\Big]{
    1
    - \tilde c\lambda_N^{(d+2\alpha+2\beta)/2}h\paren[\Big]{\frac{t_0-s}{2}}^2
    - \frac{\nu\lambda_N e^{2\norm{\nabla u}_{L^\infty}(t_0-s)}}{\norm{\grad u}_{L^\infty}^2 (t_0-s)}}\,.
\end{align*}
By our choice of $\lambda_N$ and $t_0$, we have
\begin{align*}
\tilde c\lambda_N^{(d+2\alpha+2\beta)/2}h\paren[\Big]{\frac{t_0-s}{2}}^2
  \leq \frac{1}{4}\,,
  \qquad\text{and}\qquad
\frac{\nu\lambda_N e^{2\norm{\nabla u}_{L^\infty}(t_0-s)}}{\norm{\grad u}_{L^\infty}^2 (t_0-s)}\leq \frac{1}{4}\,,
\end{align*}
from which equation~\eqref{e:lowerbound} follows.
This finishes the proof.
\end{proof}

\subsection{The Principal Eigenvalue with Dirichlet Boundary Conditions}\label{s:peval}

We now prove Proposition~\ref{p:eval} estimating the principal eigenvalue of $-\nu \lap + (u \cdot \grad)$ in a bounded domain with Dirichlet boundary conditions.

\begin{proof}[Proof of Proposition~\ref{p:eval}]
  For notational convenience we will write $\mu_0$ to denote $\mu_0(\nu , u)$.
  Let $\phi_0 = \phi_0(\nu, u)$ be the principal eigenfunction of the operator $-\nu \lap + (u \cdot \grad)$.
  Then we know
  \begin{equation*}
    \psi(x, t) \defeq \phi_0(x) e^{-\mu_0 t}
  \end{equation*}
  satisfies the advection diffusion equation
  \begin{equation*}
    \partial_t \psi + u \cdot \grad \psi - \nu \lap \psi = 0\,,
  \end{equation*}
  with initial data $\phi_0$.
  Consequently $\norm{\psi(t)} = e^{-\mu_0 t} \norm{\psi(0)}$.
  This forces $\tau_d \geq 1 / \mu_0$ proving~\eqref{e:lamda0taud} as claimed.
\end{proof}

%endsection
\appendix
\section{Weak and Strong Mixing Rates}\label{s:mixrates}

In this appendix we provide a brief introduction to mixing and, in particular, analyze the notions of weak and strong weak mixing rates as in Definition~\ref{d:mixrate}.
Recall that $M$ is a $d$-dimensional Riemannian manifold with volume form normalized so that the total volume of $M$ is $1$.
A volume preserving diffeomorphism $\varphi \colon M \to M$ is said to be \emph{mixing} (or \emph{strongly mixing}) if for every pair of Borel sets $A, B \subseteq M$, we have
\begin{equation}\label{e:mixdef1}
  \lim_{n\to \infty} \vol(\varphi^{-n} (A) \cap B)
  = \vol(A) \vol(B) \,.
\end{equation}
Roughly speaking, this says that for every Borel set $A$, successive iterations of the map $\varphi$  will stretch and fold it over $M$ so that it eventually the fraction of \emph{every} fixed region $B \subseteq M$ occupied by~$A$ will approach $\vol(A)$.
For a comprehensive review of mixing we refer the reader to~\cites{KatokHasselblatt95,SturmanOttinoEA06}.

Approximating by simple functions we see that~\eqref{e:mixdef1} immediately implies that for any $f, g \in L^2_0$, we have%
\footnote{
  Recall $L^2_0$ is the set of all mean zero square integrable functions, and $U \colon L^2_0 \to L^2_0$ is the Koopman operator defined by $Uf = f \circ \varphi$.
}
\begin{equation*}
  \lim_{n \to \infty} \ip{U^n f, g} = 0\,.
\end{equation*}
Thus, one can quantify the \emph{mixing rate} by requiring the correlations $\ip{U^n f, g}$ to decay at a particular rate.
Since these are linear in $f, g$, a natural first attempt is to require
\begin{equation}\label{e:mixRateWrong}
  \abs[\big]{ \ip{U^n f, g} } \leq h(n) \norm{f} \, \norm{g}\,,
\end{equation}
for some decreasing sequence $h(n)$ that vanishes at infinity.
This, however, is impossible.
Indeed using duality, equation~\eqref{e:mixRateWrong} immediately implies
\begin{equation}\label{e:UnFto0}
  \norm{U^n f} \leq h(n) \xrightarrow{n \to \infty} 0\,.
\end{equation}
Of course, $U$ is a unitary operator and hence we must also have~$\norm{U^n f} = \norm{f}$, which is in direct contradiction to~\eqref{e:UnFto0}.

To circumvent this difficulty, one uses stronger norms of $f$ and $g$ on the right of~\eqref{e:mixRateWrong}.
The traditional choice in the dynamical systems literature is to use H\"older norms.
However, following Fannjiang et.\ al.~\cite{FannjiangWoowski03,FannjiangNonnenmacherEA04,FannjiangNonnenmacherEA06}, we use Sobolev norms instead, as it is more convenient for our purposes.
This is the content of the first part of Definition~\ref{d:mixrate}, and is repeated here for convenience.
\begin{definition}\label{d:mixrate1}
  Let $h \colon \N \to (0, \infty)$ be a decreasing function that vanishes at infinity, and~$\alpha, \beta > 0$.
  We say that $\varphi$ is \emph{strongly $\alpha$, $\beta$ mixing with rate function~$h$} if for all $f \in \dot H^\alpha$, $g \in \dot H^\beta$  the associated Koopman operator $U$ satisfies 
  \begin{equation}\label{e:hmixrateStrong2}
    \abs[\big]{\ip{U^n f, g}}
      \leq h\paren{n} \norm{f}_{\alpha}  \norm{g}_{\beta}\,.
  \end{equation}
\end{definition}

\begin{remark}\label{r:smix}
  We saw above that there are no strongly $\alpha$, $\beta$ mixing diffeomorphisms when both $\alpha = 0$ and $\beta = 0$.
  The same argument shows that there are no strongly $\alpha$, $\beta$ mixing diffeomorphisms when either $\alpha = 0$ and $\beta = 0$, as long as the rate function $h$ vanishes at~$\infty$.
  Thus, in Definition~\ref{d:mixrate1}, we need to ensure that both $\alpha$ and $\beta$ are strictly positive.
\end{remark}

\begin{remark}
If~$U$ is simply a unitary operator, then the rate function~$h$ can decay arbitrarily fast.
However, when~$U$ is the Koopman operator associated with a smooth map~$\varphi$, the rate function can decay at most exponentially.
To see this, note that for $k \in \N$ we have $\norm{U f}_k \leq c_k \norm{f}_k$ for some finite constant~$c_k = c_k( \norm{\varphi}_{C^k} ) > 1$.
Iterating this~$n$ times, choosing $k = \ceil{\beta}$, and $g = U^n f$ in~\eqref{e:hmixrateStrong2} gives
\begin{equation*}
  \norm{f}^2 = \norm{U^n f}^2 \leq h(n) \norm{f}_\alpha \norm{f}_{k} c_k^n\,,
\end{equation*}
forcing
\begin{equation*}
  h(n) \geq \frac{\norm{f}^2 c_k^{-n}}{\norm{f}_\alpha \norm{f}_k}\,.
\end{equation*}
\end{remark}

\begin{remark}
By duality equation~\eqref{e:hmixrateStrong2} implies that if~$\varphi$ is $\alpha$, $\beta$ mixing with rate function~$h$, then
\begin{equation}\label{e:HminusBeta}
  \norm{U^n f}_{-\beta} \leq h(n) \norm{f}_\alpha\,.
\end{equation}
In particular, this implies $\norm{U^n f}_{-\beta} \to 0$ as $n \to \infty$, and this has been used by many authors~\cites{MathewMezicEA05,LinThiffeaultEA11,Thiffeault12,IyerKiselevEA14} to quantify (strong) mixing.
\end{remark}

We now address the role of $\alpha$, $\beta$ in Definition~\ref{d:mixrate1}.
It turns out that if~$\varphi$ is strongly $\alpha$, $\beta$ mixing with rate function~$h$, then it must be strongly $\alpha'$, $\beta'$ mixing (at a particular rate) for \emph{every} $\alpha'$, $\beta' > 0$.
This is stated precisely in the following proposition.

\begin{proposition}
  Suppose for some $\alpha, \beta > 0$, the map~$\varphi$ is strongly~$\alpha$, $\beta$ mixing with rate function~$h$. 
  Then, for any $\alpha'$, $\beta' > 0$, the map~$\varphi$ is strongly~$\alpha'$, $\beta'$ mixing with rate function
  \begin{equation*}
    h'(t) \defeq \lambda_1^{-\gamma} h(t)^\delta\,,
  \end{equation*}
  where
  \begin{gather*}
    \gamma \defeq
     \frac{1}{2}\paren[\Big]{ (\alpha' - \alpha)^+
      + (\beta' - \beta)^+
      +  (\beta' \varmin \beta) \paren[\Big]{ 1 - \frac{\alpha'}{\alpha}}^+
      +  (\alpha' \varmin \alpha) \paren[\Big]{1 - \frac{\beta'}{\beta}}^+}\,,
    \\
    \llap{and\qquad}
    \delta \defeq \frac{(\alpha' \varmin \alpha) (\beta' \varmin \beta)}{\alpha \beta}\,.
  \end{gather*}
\end{proposition}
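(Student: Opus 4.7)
The plan is to reformulate the claim as an operator-norm bound and derive it by two successive complex interpolations followed by a Poincaré-type embedding. For $s, t \geq 0$, set
\begin{equation*}
  N(s,t) \defeq \norm{U^n}_{\dot H^s \to \dot H^{-t}},
\end{equation*}
so that, by duality, $\varphi$ is strongly $s$, $t$ mixing at rate $h(n)$ exactly when $N(s,t) \leq h(n)$. Our goal thus becomes $N(\alpha', \beta') \leq \lambda_1^{-\gamma} h(n)^\delta$.

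I would first collect the elementary bounds. The hypothesis is $N(\alpha, \beta) \leq h(n)$. Unitarity of $U$ on $L^2_0$ combined with the Poincaré inequality $\norm{f}_0 \leq \lambda_1^{-s/2} \norm{f}_s$ (for $s \geq 0$) gives the endpoint bounds $N(\alpha, 0) \leq \lambda_1^{-\alpha/2}$ and $N(0, \beta) \leq \lambda_1^{-\beta/2}$; more generally, whenever $s \geq s' \geq 0$ and $t \geq t' \geq 0$,
\begin{equation*}
  N(s, t) \leq \lambda_1^{-(s - s' + t - t')/2}\, N(s', t').
\end{equation*}

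Next I would perform two rounds of complex interpolation. Write $a \defeq \alpha \wedge \alpha'$ and $b \defeq \beta \wedge \beta'$. Viewing the scale $\dot H^{-\sigma}$ through the spectral decomposition of $-\lap$ as a weighted $\ell^2$ space, the Calder\'on--Lions theorem identifies $[\dot H^{-\beta}, L^2]_{1 - b/\beta} = \dot H^{-b}$. Interpolating the maps $U^n \colon \dot H^\alpha \to \dot H^{-\beta}$ (norm $\leq h(n)$) and $U^n \colon \dot H^\alpha \to L^2$ (norm $\leq \lambda_1^{-\alpha/2}$), which share the common source $\dot H^\alpha$, therefore gives
\begin{equation*}
  N(\alpha, b) \leq h(n)^{b/\beta}\, \lambda_1^{-\alpha(1 - b/\beta)/2}.
\end{equation*}
A second interpolation --- now in the source, between $\dot H^\alpha \to \dot H^{-b}$ with the bound just obtained and $L^2 \to \dot H^{-b}$ with norm $\leq \lambda_1^{-b/2}$ --- at parameter $1 - a/\alpha$, using $[\dot H^\alpha, L^2]_{1 - a/\alpha} = \dot H^a$, yields
\begin{equation*}
  N(a, b) \leq h(n)^{ab/(\alpha\beta)}\, \lambda_1^{-\gamma_1},
  \qquad
  \gamma_1 \defeq \tfrac{1}{2}\bigl(a(1 - b/\beta) + b(1 - a/\alpha)\bigr).
\end{equation*}

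Finally, since $\alpha' \geq a$ and $\beta' \geq b$, the monotonicity from the first step gives
\begin{equation*}
  N(\alpha', \beta') \leq \lambda_1^{-(\alpha' - a)/2 - (\beta' - b)/2}\, N(a, b) \leq h(n)^\delta \lambda_1^{-\gamma}.
\end{equation*}
Using the identities $\alpha' - a = (\alpha' - \alpha)^+$ and $1 - a/\alpha = (1 - \alpha'/\alpha)^+$ (together with their $\beta$-analogues), a direct check shows that $\gamma = \gamma_1 + (\alpha' - a)/2 + (\beta' - b)/2$ matches the formula in the statement, and $\delta = ab/(\alpha\beta) = (\alpha \wedge \alpha')(\beta \wedge \beta')/(\alpha\beta)$ is the stated exponent. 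The only nontrivial input is the complex interpolation identity for the spaces $\dot H^s$, which is standard because these are weighted $\ell^2$ spaces on the Laplacian eigenbasis. The principal obstacle is purely algebraic: bookkeeping to confirm that the two-step interpolation produces the correct unified formula for $\gamma$ across all four sign combinations of $\alpha' - \alpha$ and $\beta' - \beta$.
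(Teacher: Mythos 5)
Your proof is correct and rests on the same two ingredients as the paper's proof: the interpolation (log-convexity) of the $\dot H^s$ norms across the spectral decomposition of the Laplacian, and the Poincar\'e bound $\norm{\cdot}_{s'} \leq \lambda_1^{(s'-s)/2}\norm{\cdot}_{s}$ for $s' \leq s$, applied once in the target slot and once in the source slot. The only difference is organizational: the paper interpolates/pads in the $g$-variable, then dualizes and repeats the same one-sided argument for the $f$-variable, handling the signs of $\beta'-\beta$ and $\alpha'-\alpha$ by an explicit case split; you instead encode everything in the single monotone quantity $N(s,t)=\norm{U^n}_{\dot H^s\to\dot H^{-t}}$, interpolate down to the ``worst'' pair $(a,b)=(\alpha\wedge\alpha',\,\beta\wedge\beta')$ in two steps, and then pad back up to $(\alpha',\beta')$ by Poincar\'e, which covers all four sign cases uniformly and matches the stated $\gamma,\delta$ after the bookkeeping you describe. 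Either route is fine; yours is slightly cleaner to verify against the closed-form exponents, while the paper's is more elementary in that it only invokes the interpolation inequality $\norm{f}_{-\beta'}\leq\norm{f}_{-\beta}^{\beta'/\beta}\norm{f}^{1-\beta'/\beta}$ rather than the Calder\'on--Lions identification of the interpolation spaces, although on the Laplacian eigenbasis these are the same fact.
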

In particular, if for some~$\alpha$, $\beta > 0$, $\varphi$ is strongly $\alpha$, $\beta$ exponentially mixing, then it is strongly $\alpha'$, $\beta'$ exponentially mixing for all $\alpha'$, $\beta' > 0$.

\begin{proof}
  If $\beta \leq \beta'$, then we note
  \begin{align*}
    \norm{U^n f}_{-\beta'}
      &\leq
	\lambda_1^{(\beta - \beta')/2}
	\norm{U^n f}_{-\beta}
    \\
      &\leq
	\lambda_1^{(\beta - \beta')/2}
	h(n)
	\norm{f}_{\alpha}\,.
  \end{align*}
  On the other hand, if $\beta > \beta'$ then by Sobolev interpolation we have
  \begin{align*}
    \norm{U^n f}_{-\beta'}
      &\leq
	\norm{U^n f}_{-\beta}^{\beta' / \beta}
	\norm{U^n f}^{1 - \beta' / \beta}
    \\
      &\leq
	h(n)^{\beta' / \beta}
	\norm{f}_{\alpha}^{\beta' / \beta}
	\norm{f}^{1 - \beta' / \beta}
    \\
      &\leq
	\lambda_1^{-\alpha(1 - \beta' / \beta)/2}
	h(n)^{\beta' / \beta}
	\norm{f}_{\alpha}\,.
  \end{align*}
  This shows that~$\varphi$ is strongly $\alpha$, $\beta'$ mixing with rate function
  \begin{equation*}
    h_1(t)
      \defeq
	\lambda_1^{-(\beta' - \beta)^+ /2- \alpha (1 - \beta' / \beta)^+/2}
	h(t)^{(\beta' / \beta) \varmin 1}\,.
  \end{equation*}

  By dualizing, we see $\varphi^{-1}$ is strongly $\beta'$, $\alpha$ mixing with rate function~$h_1$.
  Thus, using the above argument, $\varphi^{-1}$ must be $\beta'$, $\alpha'$ mixing with rate function
  \begin{align*}
    h'(t)
      &\defeq
	\lambda_1^{-(\alpha' - \alpha)^+/2 - \beta' (1 - \alpha' / \alpha)^+/2}
	h_1(t)^{(\alpha' / \alpha) \varmin 1}
    \\
      &= \lambda_1^{-\gamma} h(t)^\delta\,,
  \end{align*}
  as desired.
\end{proof}
\medskip

We now turn our attention to weak mixing.
Recall that the dynamical system generated by~$\varphi$ is said to be \emph{weakly mixing} if for every pair of Borel sets $A, B \subseteq M$, we have
\begin{equation}\label{e:wmixdef1}
  \lim_{n\to \infty} \frac{1}{n} \sum_{k = 0}^{n-1} 
    \abs[\big]{ \vol(\varphi^{-k} (A) \cap B)
	- \vol(A) \vol(B) } = 0\,.
\end{equation}
Clearly strongly mixing implies weakly mixing, but the converse is false (see for instance~\cite{AnosovKatok70}).
Approximating by simple functions, and using the fact that~$U$ is $L^2$ bounded, one can show that~\eqref{e:wmixdef1} holds if and only if
\begin{equation}\label{e:wmixdef2}
  \lim_{n\to \infty} \frac{1}{n} \sum_{k = 0}^{n-1} 
    \abs[\big]{ \ip{U^n f, g} }^2 = 0\,,
\end{equation}
for all $f, g \in L^2_0$ (see for instance~\cite[Theorem~9.19 (iv)]{EisnerFarkasEA15}).
We can now quantify the \emph{weak mixing rate} by by imposing a rate of convergence in~\eqref{e:wmixdef2}.
This is the content of the second part of Definition~\ref{d:mixrate}, and is repeated here for convenience.
\begin{definition}\label{d:wmixrate}
  Let $h \colon \N \to (0, \infty)$ be a decreasing function that vanishes at infinity.
  Given $\alpha, \beta \geq 0$, we say that $\varphi$ is \emph{weakly $\alpha$, $\beta$ mixing with rate function~$h$} if for all $f \in \dot H^\alpha$, $g\in \dot H^\beta$ and $n \in \N$ the associated Koopman operator $U$ satisfies 
  \begin{equation}\label{e:hmixrateWeakAppendix}
    \paren[\Big]{ \frac{1}{n} \sum_{k = 0}^{n-1}
      \abs[\big]{\ip{U^kf,g}}^2 }^{1/2}
      \leq h\paren{n} \norm{f}_{\alpha}  \norm{g}_{\beta}\,.
  \end{equation}
\end{definition}

As mentioned in Remark~\ref{r:smix}, when defining strong mixing rates, we need to consider stronger norms of both the test functions $f$ and $g$ (i.e.\ we needed both $\alpha > 0$ and $\beta > 0$).
For weak mixing rates, however, one need not use stronger norms of both \emph{both} the test functions $f$ and $g$.
Indeed Proposition~\ref{p:catmapWeakMix} shows that for toral automorphisms, either~$\alpha$ or~$\beta$ (but not both) may be chosen to be~$0$.
We now show that it is impossible to choose \emph{both} $\alpha = 0$ and $\beta = 0$, and thus~\eqref{e:hmixrateWeakAppendix} must involve a stronger norm of either~$f$, or of $g$.

\begin{proposition}
  Let~$h$ be any function that decreases to~$0$.
  Then there does not exist any diffeomorphism~$\varphi$ which is  weakly $0$, $0$ mixing with rate function~$h$.
\end{proposition}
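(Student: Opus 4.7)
The plan is to test the weak mixing inequality~\eqref{e:hmixrateWeakAppendix} against $f=g$ and exhibit a family of functions in $L^2_0$ along which the time-averaged correlations fail to decay uniformly. The key observation is that we do not need $\varphi$ to be non-weakly-mixing (in the classical sense) to produce such a family: \emph{approximate} eigenvectors of the Koopman operator~$U$ suffice, and these exist for any volume-preserving diffeomorphism simply because $U|_{L^2_0}$ is a unitary operator on a nontrivial Hilbert space.

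To begin, I would observe that since $\varphi$ preserves volume and $L^2_0(M)$ is a nontrivial (indeed infinite-dimensional) Hilbert space, the restriction $U|_{L^2_0}$ is unitary and its spectrum $\sigma(U|_{L^2_0})$ is a nonempty compact subset of the unit circle. Pick any $e^{i\lambda_0}\in\sigma(U|_{L^2_0})$. Because $U|_{L^2_0}$ is normal, every point of its spectrum belongs to the approximate point spectrum, so for each $\epsilon>0$ there exists $f_\epsilon\in L^2_0$ with $\norm{f_\epsilon}=1$ and
\begin{equation*}
  \norm{U f_\epsilon - e^{i\lambda_0} f_\epsilon}<\epsilon\,.
\end{equation*}
A one-line induction, using that $U$ is an isometry, then yields $\norm{U^k f_\epsilon - e^{ik\lambda_0}f_\epsilon}<k\epsilon$ for every $k\in\N$, and the Cauchy--Schwarz inequality gives
\begin{equation*}
  \abs{\ip{U^k f_\epsilon,f_\epsilon}} \geq 1 - k\epsilon\,.
\end{equation*}

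Setting $n \defeq \lfloor 1/(2\epsilon)\rfloor$, each of the $n$ terms with $0\leq k\leq n-1$ satisfies $\abs{\ip{U^k f_\epsilon,f_\epsilon}}^2\geq 1/4$. Plugging $f=g=f_\epsilon$ into~\eqref{e:hmixrateWeakAppendix} therefore produces
\begin{equation*}
  \frac{1}{4} \leq \frac{1}{n}\sum_{k=0}^{n-1}\abs{\ip{U^k f_\epsilon,f_\epsilon}}^2 \leq h(n)^2\,,
\end{equation*}
so $h(n)\geq 1/2$. Since $\epsilon$ may be taken arbitrarily small, $n$ becomes arbitrarily large, contradicting the hypothesis that $h$ decreases to zero. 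This completes the outline.

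The only non-mechanical step is the extraction of the approximate eigenvector $f_\epsilon$, which I expect to be the principal (albeit very mild) obstacle: it rests on the standard fact that for a normal operator the spectrum coincides with the approximate point spectrum, together with the trivial but essential remark that $L^2_0(M)\neq\{0\}$ so that $\sigma(U|_{L^2_0})$ is nonempty. Everything else in the argument is elementary manipulation.
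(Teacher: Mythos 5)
Your proof is correct and takes a genuinely different route from the paper. The paper's argument is dynamical: it first observes that $\varphi$ must be weakly mixing (hence ergodic), picks a point~$x_0$ with dense orbit, shrinks a ball~$B(x_0,\delta)$ so that its images $\varphi^k(B(x_0,\delta))$ for $0<\abs{k}<2N$ are pairwise disjoint from it, and then builds two \emph{different} test functions $f=N^{-1/2}\sum_{i=0}^{N-1} U^{-i}\rho$ and $g = N^{-1/2}\sum_{i=0}^{N-1} U^i\rho$ from a bump~$\rho$ supported in that ball; the orthogonality of the iterates makes $\norm{f}=\norm{g}=1$ and the correlations $\ip{U^kf,g}$ pile up to at least~$\tfrac12$ on average. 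Your argument instead works purely at the level of operator theory: you take an approximate eigenvector $f_\epsilon$ of the unitary operator $U|_{L^2_0}$ (which exists because the spectrum is a nonempty subset of the circle and, for a normal operator, equals the approximate point spectrum), propagate the error linearly using that $U$ is an isometry, and test with $f=g=f_\epsilon$.

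The two approaches buy different things. The paper's construction is self-contained within ergodic theory and produces a clean identity for $\tfrac{1}{N}\sum_k\ip{U^kf,g}$, but it invokes the intermediate fact that the hypothetical rate function forces weak mixing, which forces ergodicity, which forces a dense orbit, before the separation-of-iterates argument can even begin. Your argument bypasses all of this: it uses \emph{nothing} about $\varphi$ beyond the unitarity of $U$ and the nontriviality of $L^2_0$, so it actually proves a stronger statement --- no unitary operator on any nonzero Hilbert space can satisfy the weak $0,0$ decay bound --- and is arguably shorter and more elementary, at the cost of importing the (standard) spectral-theoretic fact that for a normal operator the spectrum coincides with the approximate point spectrum. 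Both proofs yield the same qualitative contradiction $h(n)\geq\tfrac12$ for arbitrarily large $n$. One small caveat to make explicit in a polished write-up: the induction step $\norm{U^{k}f_\epsilon-e^{ik\lambda_0}f_\epsilon}<k\epsilon$ uses $\norm{U^{k+1}f_\epsilon-e^{i(k+1)\lambda_0}f_\epsilon}\leq\norm{U^{k}(Uf_\epsilon-e^{i\lambda_0}f_\epsilon)}+\norm{U^{k}f_\epsilon-e^{ik\lambda_0}f_\epsilon}$ and the isometry of $U^k$; you gesture at this (``$U$ is an isometry'') but the reader benefits from seeing the telescoping written out.
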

\begin{proof}
  Suppose for contradiction there exists a diffeomorphism~$\varphi$ which is weakly $0$, $0$ mixing with some rate function~$h$.
  Recall, by definition, the rate function~$h$ must vanish at infinity.
  We will show that for any fixed $N \in \N$,
  \begin{equation}\label{e:wmix1}
    \sup_{\norm{f} = \norm{g} = 1}
      \paren[\Big]{
	\frac{1}{N} \sum_{k = 0}^{N - 1} \abs{\ip{U^k f, g}}^2
      }
      \geq \frac{1}{2} \,.
  \end{equation}
  This immediately implies~$h(N) \geq 1/2$, contradicting the fact that $h$ vanishes at~$\infty$.

  Thus to finish the proof we only need to prove~\eqref{e:wmixdef1}.
  For this, note that~$\varphi$ must be weakly mixing (as $h$ vanishes at infinity).
  Since weakly mixing maps are ergodic, we know (see for instance~\cite{Walters82}) that almost every point has a dense orbit.
  Let $x_0$ be one such point, and note that $\varphi^n(x_0) \neq x_0$ for all $n \neq 0$.
  By continuity of~$\varphi$ we can now find a~$\delta = \delta(N) > 0$ such that 
  \begin{align*}
    B(x_0, \delta)\cap \varphi^k\paren[\big]{B(x_0, \delta)}=\emptyset\,,
    \quad\text{whenever }
      0 < \abs{k} < 2N \,.
  \end{align*}

  Now let~$\rho \in C_c(B(x_0, \delta)) \cap L^2_0(M)$ be such that $\norm{\rho} = 1$, and define the test functions $f, g$ by
  \begin{align*}
    f=\frac{1}{\sqrt{N}}\sum_{i=0}^{N-1}U^{-i} \rho\,,
    \qquad\text{and}\qquad
    g=\frac{1}{\sqrt{N}}\sum_{i=0}^{N-1}U^{i} \rho\,.
  \end{align*}
  Note by definition of~$\rho$ we have $\ip{U^i \rho, U^j\rho} = 0$ whenever $0 < \abs{i - j} < 2N$.
  This implies $\norm{f} = \norm{g} = 1$, and
  \begin{align*}
    \frac{1}{N}\abs[\Big]{\sum_{k=0}^{N-1} \ip{U^k f, g}}
      =\frac{1}{N^2}\sum_{i,j,k=0}^{N-1} \ip{U^{k-i} \rho, U^j \rho}
      =\frac{1}{N^2}\sum_{k=0}^{N-1} \sum_{i=0}^k 1
      =\frac{N + 1}{2N}
      \geq \frac{1}{2}\,.
  \end{align*}
  This proves~\eqref{e:wmix1} as desired, finishing the proof.
\end{proof}

\begin{comment}\label{r:weakmixrate}
By choosing~$f = g$ one immediately sees that for weakly $\alpha$, $\beta$ mixing maps, we must have~$h(n) \geq 1/\sqrt{n}$.
Moreover, if $\varphi$ is strongly $\alpha$, $\beta$ mixing with rate function~$h$, then $\varphi$ is also \emph{weakly} $\alpha$, $\beta$ mixing.
In this case, the weak $\alpha$, $\beta$ rate function, denoted by $h_w$, is given by
\begin{equation*}
  h_w(n) \defeq  \paren[\Big]{ \frac{1}{n} \sum_{k = 0}^{n-1} h(k)^2 }^{1/2}\,.
\end{equation*}
If additionally $h$ is square summable, then $h_w(n) \leq \sqrt{H / n}$, where $H^2 = \sum_0^\infty h(k)^2$.
\end{comment}

\section{A characterization of relaxation enhancing maps on the torus}\label{s:ckrz}

We devote this appendix to proving Proposition~\ref{p:re} characterizing maps~$\varphi$ for which $\nu \tau_d \to 0$.
The main idea behind the proof is the same as that used in~\cite{ConstantinKiselevEA08,KiselevShterenbergEA08}.
The backward implication is simpler, and we present the proof of it first.

\begin{proof}[Proof of the backward implication in Proposition~\ref{p:re}]
  For the backward implication, we need to assume~$\nu \tau_d \to 0$, and show that the associated Koopman operator $U$ has no non-constant eigenfunctions in $\dot H^1$.
  Suppose, for sake of contradiction, that $f \in \dot H^1$ is an eigenfunction, normalized so that $\norm{f}=1$, and let~$\lambda$ be the corresponding eigenvalue. 
  Choosing~$\theta_0 = f$, and defining~$\theta_n$ by~\eqref{e:pulseddiff2} we observe
  \begin{align*}
    \MoveEqLeft
    \abs{\ip{\theta_{n+1},f}-\ip{U\theta_n, f}} =\abs[\Big]{\sum_k (1-e^{-\nu\lambda_k})(U\theta_n)^\wedge(k) \overline{\hat f(k)}}\\
    &\leq \nu \paren[\Big]{\sum_k \frac{1-e^{-\nu\lambda_k}}{\nu}\abs{(U\theta_n)^\wedge(k)}^2}^{\sfrac{1}{2}} \paren[\Big]{\sum_k\frac{1-e^{-\nu\lambda_k}}{\nu}\abs{\hat{f}(k)}^2}^{\sfrac{1}{2}}\\
    &\leq \nu \paren[\Big]{\sum_k \frac{1-e^{-\nu\lambda_k}}{\nu}\abs{(U\theta_n)^\wedge(k)}^2}^{\sfrac{1}{2}} \paren[\Big]{\sum_k\frac{1-e^{-\nu\lambda_k}}{\nu}\abs{\hat{f}(k)}^2}^{\sfrac{1}{2}}\\
    &\leq  \nu (\Enu \theta_n)^{\sfrac{1}{2}} \norm{ f}_{1}
    \leq \frac{\nu}{2}\Enu\theta_n+\frac{\nu}{2}\norm{f}_1^2\,.
  \end{align*}
Using equation \eqref{e:l2decay1}, this gives
\begin{align*}
\abs{\ip{\theta_{n+1},f}-\ip{U\theta_n, f}}\leq \frac{1}{2}(\norm{\theta_n}^2-\norm{\theta_{n+1}}^2 )+ \frac{\nu}{2}\norm{f}_{1}^2\,,
\end{align*}
which implies 
\begin{align*}
\abs{\ip{\theta_{n+1},f}}-\abs{\ip{U\theta_n, f}} \geq -\frac{1}{2}(\norm{\theta_n}^2-\norm{\theta_{n+1}}^2)- \frac{\nu}{2}\norm{f}_{1}^2\,.
\end{align*}

Since $\ip{U\theta_n, f}=\ip{\theta_n, U^*f}=\lambda \ip{ \theta_n, f}$, and $\abs{\lambda}=1$, the above implies
\[
\abs{\ip{\theta_{n+1},f}}-\abs{\ip{\theta_n, f}} \geq -\frac{1}{2}(\norm{\theta_n}^2-\norm{\theta_{n+1}}^2)- \frac{\nu}{2} \norm{f}_{1}^2\,.
\]
Iterating this gives
\begin{align*}
  \abs{\ip{\theta_{n},f}}-\abs{\ip{f,f}}
    &\geq 
	 -\frac{1}{2}(\norm{f}^2-\norm{\theta_{n}}^2)- \frac{n\nu}{2} \norm{f}_{1}^2\,,
\end{align*}
since $\theta_0=f$.
Thus
\begin{align*}
  \abs{\ip{\theta_{n},f}}
    \geq \frac{1}{2}\norm{f}^2  +\frac{1}{2}\norm{\theta_n}^2- \frac{n\nu}{2} \norm{f}_{1}^2\geq \frac{1}{2}- \frac{n\nu}{2} \norm{f}_{1}^2\,.
\end{align*}

Now choosing $n$ to be the dissipation time $\tau_d$ gives
\begin{align*}
 \frac{1}{e} \geq  \abs{\ip{\theta_{\tau_d},f}}
    \geq \frac{1}{2}- \frac{\tau_d\nu}{2}\norm{f}_{1}^2\,,
\end{align*}
and hence
\begin{equation*}
  \nu\tau_d\geq \frac{e-2}{e\norm {f}_1^2}\,.
\end{equation*}
This contradicts the assumption $\nu\tau_d \to 0$ as $\nu \to 0$, finishing the prof.
\end{proof}

For the other direction, we need two lemmas.
The first is an application of the discrete RAGE theorem.
\begin{lemma}\label{a:1}
  Let $K\subset S=\set{\phi \in L^2_0 \st \norm{\phi}=1}$ be a compact set.
  Let $P_c$ be the spectral projection on the continuous spectral subspace in the spectral decomposition of~$U$.
  For any $N,\delta >0$, there exists $n_c(N,\delta,K)$ such that for all $n\geq n_c$ and any $\phi\in K$, we have
\begin{align}\label{e:rage1}
\frac{1}{n-1}\sum_{i=1}^{n-1} \norm{P_N U^i P_c\phi}^2\leq \delta\,.
\end{align} 
\end{lemma}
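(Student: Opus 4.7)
The plan is to establish pointwise decay of the Ces\`aro average in $n$ for each fixed $\phi \in K$ via the spectral theorem and Wiener's theorem, and then upgrade to a uniform bound on $K$ by a standard $\epsilon$-net argument using compactness.

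First, I would expand using the fact that $P_N$ is the orthogonal projection onto $\operatorname{span}\{e_1,\dots,e_N\}$, so that
\begin{equation*}
  \|P_N U^i P_c \phi\|^2 = \sum_{k=1}^N |\ip{U^i P_c\phi, e_k}|^2.
\end{equation*}
Since $U$ is unitary, the spectral theorem gives a projection-valued measure $E$ on the unit circle with $U^i = \int_{\mathbb T} \zeta^i \, dE(\zeta)$. For each $k$ the complex measure $\mu_{\phi,k}(d\zeta) \defeq \ip{dE(\zeta) P_c \phi, e_k}$ is continuous in $\zeta$, because $P_c$ projects onto the continuous spectral subspace (its variation is dominated by $|\ip{dE P_c \phi, P_c\phi}|^{1/2} |\ip{dE e_k, e_k}|^{1/2}$, and the first factor is atomless). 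Then
\begin{equation*}
  \ip{U^i P_c \phi, e_k} = \hat\mu_{\phi,k}(i),
\end{equation*}
so Wiener's theorem (on Fourier coefficients of atomless measures) yields
\begin{equation*}
  \lim_{n\to \infty} \frac{1}{n-1} \sum_{i=1}^{n-1} |\hat\mu_{\phi,k}(i)|^2 = \sum_{\zeta \in \mathbb T} |\mu_{\phi,k}(\{\zeta\})|^2 = 0.
\end{equation*}
Summing over $k \in \{1,\dots,N\}$ (a finite set) gives, for each fixed $\phi$,
\begin{equation*}
  \frac{1}{n-1}\sum_{i=1}^{n-1}\|P_N U^i P_c \phi\|^2 \xrightarrow{n\to\infty} 0.
\end{equation*}

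To upgrade this pointwise statement to one uniform in $\phi \in K$, I would use compactness of $K$: given $\delta > 0$, choose $\epsilon = \sqrt{\delta/8}$ and cover $K$ by finitely many balls $B(\phi_1, \epsilon), \dots, B(\phi_M, \epsilon)$. Apply the pointwise statement to each $\phi_j$ to obtain integers $n_j$ with $\frac{1}{n-1} \sum_{i=1}^{n-1} \|P_N U^i P_c \phi_j\|^2 \leq \delta/4$ for $n \geq n_j$. Set $n_c \defeq \max_j n_j$. For arbitrary $\phi \in K$, pick $j$ with $\|\phi - \phi_j\| < \epsilon$ and use that $P_N$, $U^i$, $P_c$ are contractions on $L^2_0$ to write
\begin{equation*}
  \|P_N U^i P_c \phi\|^2 \leq 2\|P_N U^i P_c \phi_j\|^2 + 2\epsilon^2,
\end{equation*}
which, after Ces\`aro-averaging and using $n \geq n_c$, yields the desired bound $\delta$.

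The main obstacle is the uniformity step, but this is handled cleanly by the $\epsilon$-net argument together with the observation that each of the operators $P_N$, $U^i$, $P_c$ has operator norm at most~$1$. The spectral/Wiener input is standard (this is the discrete RAGE theorem), and the only content of the lemma beyond RAGE is that the convergence rate depends only on the compact set $K$, not on the individual element.
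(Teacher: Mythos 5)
Your proof is correct and follows essentially the same two-step approach as the paper: establish pointwise decay of the Ces\`aro average via the discrete RAGE theorem, then upgrade to uniformity over the compact set~$K$. The only expository differences are that you derive the RAGE-type decay directly from Wiener's theorem on Fourier coefficients of atomless spectral measures (the paper simply cites RAGE), and you run the compactness step as an explicit $\epsilon$-net estimate whereas the paper observes that $\phi\mapsto \frac{1}{n-1}\sum_{i=1}^{n-1}\norm{P_N U^i P_c\phi}^2$ is $2$-Lipschitz uniformly in $n$ and invokes equicontinuity --- these amount to the same argument.
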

\begin{proof}
  \GI[2018-06-11]{TODO: \cite{CyconFroeseEA87} has the continuous time RAGE. Eventually find a reference for the discrete time version.}
  Define
  \begin{align*}
    f(n,\phi) \defeq \frac{1}{n-1}\sum_{i=1}^{n-1} \norm{P_N U^i P_c\phi}^2\,.
  \end{align*}
  Recall that by the RAGE theorem~\cite{CyconFroeseEA87} we have
  \begin{align*}\label{e:rage}
    \lim_{n\to \infty} \frac{1}{n}\sum_{i=0}^{n-1} \norm{A U^i P_c\phi}^2=0\,,
    \quad\text{for any compact operator }A \,,
  \end{align*}
  and hence for all~$\phi$, $f(n, \phi) \to 0$ as $n \to \infty$.
  Thus, to finish the proof, we only need to show that this convergence is uniform on compact sets.

  To prove this, it is enough to prove the functions $f(n,\cdot)$ are equicontinuous.
  For this observe that for any $\phi_1, \phi_2\in S$ we have
  \begin{align*}
    \MoveEqLeft
    |f(n,\phi_1)-f(n,\phi_2)|\\
    &\leq \frac{1}{n-1}\sum_{i=1}^{n-1}\abs[\big]{\norm{P_NU^iP_c\phi_1}-\norm{P_NU^iP_c\phi_2}}\paren[\big]{\norm{P_NU^iP_c\phi_1}+\norm{P_NU^i P_c\phi_2}}\\
    &\leq \frac{1}{n-1}\sum_{i=1}^{n-1} \norm{\phi_1-\phi_2}\paren[\big]{\norm{\phi_1}+\norm{\phi_2}}\\
    &\leq 2\norm{\phi_1-\phi_2}\,.
  \end{align*}
  This shows equicontinuity, finishing the proof.
\end{proof}

\begin{lemma}\label{a:2}
Assume that the Koopman operator $U$ has no eigenfunctions in $\dot H^1$.  Let $P_p$ be the spectral projection on its point spectral subspace. Let $K$ be a compact subset of $S$. Define the set $K_1=\{\phi \in K\,|\, \norm{P_p\phi}\geq \frac{1}{2}\}$. Then for any $C>0$, there exist $N_p(C,K)$ and $n_p(C,K)$ such that for any $N\geq N_p(C,K)$, any $n\geq n_p(C,K)$, and any $\phi \in K_1$, 
\begin{align}
\frac{1}{n-1}\sum_{i=1}^{n-1}\norm{P_NU^iP_p\phi}_1^2 \geq C\,.
\end{align}
\end{lemma}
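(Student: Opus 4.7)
The plan is to use the spectral theorem for the unitary $U$ on the point-spectrum subspace, combined with compactness of $K_1$ and the hypothesis that $U$ has no eigenfunctions in $\dot H^1$. As in the main text, let $\set{e_k}$ denote an orthonormal eigenbasis of $-\Delta$ with eigenvalues $\lambda_k$; for each unimodular $\mu$ let $P_\mu^U$ denote the orthogonal projection onto $\ker(U-\mu)$.

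Wiener's theorem applied to the unitary $U$ yields, for any $\varphi\in L^2_0$,
\begin{equation*}
  \lim_{n\to\infty}\frac{1}{n-1}\sum_{i=1}^{n-1}\abs{\ip{U^i P_p\phi,\varphi}}^2 = \sum_\mu\abs{\ip{P_\mu^U\phi,\varphi}}^2\,,
\end{equation*}
where the sum is over the (countably many) eigenvalues $\mu$ of $U$. Applying this with $\varphi=e_k$, multiplying by $\lambda_k$, and summing over $k\leq N$ (a finite sum, so the limit commutes with it) gives
\begin{equation*}
  S_N(n,\phi)\defeq\frac{1}{n-1}\sum_{i=1}^{n-1}\norm{P_N U^i P_p\phi}_1^2 \longrightarrow f_N(\phi)\defeq\sum_\mu\norm{P_N P_\mu^U\phi}_1^2 \quad\text{as } n\to\infty\,.
\end{equation*}
For $\phi\in K_1$ we have $\norm{P_p\phi}\geq 1/2$, so there is some $\mu_0=\mu_0(\phi)$ with $P_{\mu_0}^U\phi\neq 0$; this vector lies in $\ker(U-\mu_0)$, hence is an eigenfunction of $U$, and by hypothesis is not in $\dot H^1$. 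Thus $\norm{P_N P_{\mu_0}^U\phi}_1^2\nearrow\infty$ as $N\to\infty$, which forces $f_N(\phi)\to\infty$ pointwise on $K_1$.

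To extract the uniform thresholds $N_p$ and $n_p$, I would use compactness twice. For each fixed $N$, the formula $f_N(\phi) = \sum_{k\leq N}\lambda_k\sum_\mu\abs{\ip{P_\mu^U\phi,e_k}}^2$ exhibits $f_N$ as a finite sum of bounded quadratic forms in $\phi$ (the inner sum is majorized by $\norm{P_p\phi}^2\leq 1$), so $f_N$ is continuous in $\phi$ and non-decreasing in $N$. For each $\phi\in K_1$ pick $N(\phi)$ with $f_{N(\phi)}(\phi)\geq 3C$; continuity makes this persist on a neighborhood of $\phi$, and a finite subcover of $K_1$ together with monotonicity of $f_N$ in $N$ yields $N_p=N_p(C,K)$ with $f_{N_p}\geq 2C$ on $K_1$. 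Next, fix $N=N_p$: the family $(S_{N_p}(n,\cdot))_n$ is uniformly Lipschitz, since $\norm{P_{N_p}U^i P_p\psi}_1^2\leq\lambda_{N_p}\norm{\psi}^2$ and $\abs{a^2-b^2}=(a+b)\abs{a-b}$ combine to give $\abs{S_{N_p}(n,\phi)-S_{N_p}(n,\psi)}\leq 2\lambda_{N_p}\norm{\phi-\psi}$ with constant independent of $n$. Equicontinuity plus pointwise convergence on the compact set $K_1$ upgrades (via Arzel\`a--Ascoli) to uniform convergence $S_{N_p}(n,\cdot)\to f_{N_p}$, so there exists $n_p=n_p(C,K)$ with $S_{N_p}(n,\phi)\geq f_{N_p}(\phi)-C\geq C$ for every $n\geq n_p$ and $\phi\in K_1$. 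Monotonicity of $P_N$ in $N$ then extends this bound to all $N\geq N_p$.

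The main obstacle is the second uniformization step, namely upgrading the pointwise limit $S_N(n,\phi)\to f_N(\phi)$ to uniform convergence in $\phi$ on $K_1$. The key enabling fact is that the Lipschitz constant $2\lambda_{N_p}$ is finite, which is only possible because the cutoff $N_p$ has already been fixed by the first compactness argument; this explains why the thresholds $N_p$ and $n_p$ must be chosen in this order rather than letting $N\to\infty$ and $n\to\infty$ simultaneously.
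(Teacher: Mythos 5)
Your proof is correct, and it is essentially the same argument as Lemma~3.3 of~\cite{ConstantinKiselevEA08} (Wiener's theorem on the point-spectrum part, followed by two compactness steps to uniformize first in $N$ and then in $n$), which is precisely the proof the paper defers to when it states ``the proof of this is the same as Lemma~3.3 in~\cite{ConstantinKiselevEA08}.'' In particular your observations that $f_N$ is a bounded quadratic form (hence continuous), that $f_N(\phi)\to\infty$ on $K_1$ because $P_{\mu_0}^U\phi$ is an eigenfunction of $U$ and hence lies outside $\dot H^1$, and that the family $S_{N_p}(n,\cdot)$ is uniformly Lipschitz once $N_p$ is fixed, are exactly the ingredients that make the argument go through.
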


The proof of this is the same as Lemma 3.3 in \cite{ConstantinKiselevEA08} and we do not present it here.
We can now finish the proof of Proposition~\ref{p:re}. 
\begin{proof}[Proof of the forward implication in Proposition~\ref{p:re}]
  For this direction we are given that $U$ has no eigenfunctions in $\dot H^1$, and need to show $\nu \tau_d \to 0$ as $\nu \to 0$.
  We will show that for any $\eta>0$, 
  \begin{align}\label{e:b3}
    \norm[\Big]{\theta\paren[\Big]{ \ceil[\Big]{\frac{\eta}{\nu}}} }
      \to 0 \text{ as } \nu\to 0\,,
  \end{align}
  which immediately implies $\nu\tau_d \to 0$ as~$\nu \to 0$.
  %Otherwise, we can find a sequence $\nu_k$, such that $\nu_k\tau_{d,k}\geq \eta_0>0$, which yields $\norm{\theta(\ceil{\frac{\eta_0}{\nu_k}})}\geq \frac{1}{e}$ for any $k$. Contradiction.

  To prove~\eqref{e:b3}, we need to show for any given $\eta, \epsilon$, there exists $\nu_0$, such that for any $\nu\leq \nu_0$, we have $\norm{\theta(\ceil{\frac{\eta}{\nu}})}^2\leq \epsilon$ for any initial $\theta_0\in H$ with $\norm{\theta_0}=1$.
  We choose $N$ large enough satisfying $e^{-\lambda_N\eta/80}\leq \epsilon$.  Denote $K=\{\phi\in S\,|\, \norm{\phi}^2\leq \lambda_N\}$, and $K_1=\{ \phi \in K\,|\, \norm{P_p\phi}\geq \frac{1}{2}\}$. Let $n_1$ be 
  \begin{align*}
    n_1=\max\set[\Big]{2, n_p(5\lambda_N,K), n_c\paren[\Big]{ N,\frac{1}{20},K} }\,,
  \end{align*}
  and choose $\nu_0$ small enough so that
  \begin{equation*}
    n_1\leq \frac{\eta}{2\nu_0}\,,
    \qquad
    \nu_0n_1^2\leq \frac{1}{\lambda_N}
    \qquad\text{and}\qquad
    \frac{n_1^2\nu_0\lambda_N\norm{\grad \varphi}_{L^\infty}^{2n_1+2}}{(n_1-1)(\norm{\grad \varphi}_{L^\infty}^2-1)}\leq \frac{1}{4}\,.
  \end{equation*}

  Note that if $\Enu\theta_n \geq \lambda_N\norm{\theta_n}^2$ for all $n\in [0,\ceil{\sfrac{\eta}{\nu}}]$, then we have
  \begin{equation*}
    \norm[\Big]{\theta \paren[\Big]{ \ceil[\Big]{\frac{\eta}{\nu}}}}^2 \leq e^{-\nu \lambda_N \ceil{\sfrac{\eta}{\nu}}} \leq e^{-\lambda_N\eta} \leq \epsilon\,.
  \end{equation*}
  If not, let $n_0\in [0,\ceil{\sfrac{\eta}{\nu}}]$ be the first time satisfying $\Enu \theta_{n_0}< \lambda_N \norm{\theta_{n_0}}^2$.
  Similar to \eqref{H1small} we have $\norm{\theta_{n_0+1}}_1^2< \lambda_N\norm{\theta_{n_0+1}}^2$.
  We  claim that our choice of~$n_1$ will guarantee
  \begin{equation}\label{e:b4}
    \norm{\theta_{n_0+n_1}}^2 \leq e^{-\lambda_N \nu n_1/40}\norm{\theta_{n_0}}^2\,.
  \end{equation}
  Given~\eqref{e:b4}, we can find $\tilde n\in [\eta/(2\nu),\eta/\nu]$ such that $\norm{\theta(\ceil{\eta/\nu})}^2 \leq \norm{\theta_{\tilde n}}^2 \leq e^{-\lambda_N \nu \tilde n/40} \leq e^{-\lambda_N\eta/80}\leq \epsilon$, proving~\eqref{e:b3} as desired.

  Thus it only remains to prove~\eqref{e:b4}.
  For this, define $\phi_m= U^{m-1}\theta_{n_0+1}$, and observe 
  \begin{equation*}
    \frac{\phi_1}{\norm{\phi_1}}
      = \frac{\theta_{n_0+1}}{\norm{\theta_{n_0+1}}} \in K\,,
      \quad
      P_c \phi_m=U^{m-1}P_c\theta_{n_0+1}\,,
      \quad\text{and}\quad
      P_p \phi_m=U^{m-1}P_p\theta_{n_0+1}\,.
  \end{equation*}
  We now consider two cases.

  \begin{proofcases}
    \case[$\norm{P_c\theta_{n_0+1}}^2\geq \frac{3}{4}\norm{\theta_{n_0+1}}^2$
      (or equivalently $\norm{P_p\theta_{n_0+1}}^2\leq \frac{1}{4}\norm{\theta_{n_0+1}}^2$)]
  In this case, we have 
  \begin{align}\label{e:atmp1}
    \MoveEqLeft
    \nonumber
    \sum_{m=1}^{n_1-1}\Enu\theta_{n_0+m} \geq 2\sum_{m=1}^{n_1-1}\norm{\theta_{n_0+1+m}}_1^2\\
    \nonumber
    &\geq 2\lambda_N \sum_{m=1}^{n_1-1}\norm{(I-P_N)\theta_{n_0+1+m}}^2\\
    &\geq \lambda_N\sum_{m=1}^{n_1-1}\norm{(I-P_N)\phi_{m+1}}^2-2\lambda_N\sum_{m=1}^{n_1-1}\norm{(I-P_N)(\theta_{n_0+1+m}-\phi_{m+1})}\,.
  \end{align}
  By direct calculation, we also have
  \begin{align*}
    \norm{(I-P_N)\phi_{m+1}}^2 
    & \geq \frac{1}{2}\norm{(I-P_N)P_c\phi_{m+1}}^2-\norm{(I-P_N)P_p\phi_{m+1}}^2\\
    &\geq \frac{1}{2}\norm{U^mP_c\theta_{n_0+1}}^2-\frac{1}{2}\norm{P_NU^mP_c\theta_{n_0+1}}^2-\norm{U^mP_p\theta_{n_0+1}}^2\\
    &= \frac{1}{2}\norm{P_c\theta_{n_0+1}}^2-\frac{1}{2}\norm{P_NU^mP_c\theta_{n_0+1}}^2-\norm{P_p\theta_{n_0+1}}^2\,.
  \end{align*}
  By Lemmas~\ref{a:1},\ref{a:2}, and the choice of $n_1$, we have
  \begin{align}\label{e:atmp2}
    \frac{1}{n_1-1}\sum_{m=1}^{n_1-1}\norm{(I-P_N)\phi_{m+1}}^2 \geq \frac{1}{10}\norm{\theta_{n_0+1}}^2\,.
  \end{align}
  Substituting~\eqref{energysub1} and \eqref{e:atmp2} in~\eqref{e:atmp1} gives
  \begin{align*}
    \sum_{m=1}^{n_1-1}\Enu \theta_{n_0+m}\geq \frac{\lambda_N(n_1-1)}{20}\norm{\theta_{n_0+1}}^2\,.
  \end{align*}
  Since $\norm{\theta_{n_0+n_1}}^2 =\norm{\theta_{n_0+1}}^2 -\nu \sum_{m=1}^{n_1-1}\Enu \theta_{n_0+m}$, we further have
  \begin{align*}
    \norm{\theta_{n_0+n_1}}^2 &\leq \Big(1-\frac{\nu \lambda_N(n_1-1)}{20}\Big)\norm{\theta_{n_0+1}}^2 \\
    & \leq \paren[\Big]{1-\frac{\nu\lambda_N n_1}{40}}\norm{\theta_{n_0}}^2 \leq e^{-\frac{\nu \lambda_N n_1}{40}}\norm{\theta_{n_0}}^2\,.
  \end{align*}

  \case[%
    $\norm{P_p\theta_{n_0+1}}^2\geq \frac{1}{4}\norm{\theta_{n_0+1}}^2$
    (or equivalently $\norm{P_c\theta_{n_0+1}}^2\leq \frac{3}{4}\norm{\theta_{n_0+1}}^2$)]
  By Lemma~\ref{a:2}, we have 
  \begin{align}\label{e:atmp3}
    \frac{1}{n_1-1}\sum_{m=1}^{n_1-1}\norm{P_N U^m P_p \theta_{n_0+1}}_1^2 \geq 5\lambda_N\norm{\theta_{n_0+1}}^2\,,
  \end{align}
  and Lemma~\ref{a:1} yields
  \begin{align}\label{e:atmp4}
    \frac{1}{n_1-1}\sum_{m=1}^{n_1-1}\norm{P_N U^m P_c \theta_{n_0+1}}_1^2 \leq \frac{\lambda_N}{20}\norm{\theta_{n_0+1}}^2\,.
  \end{align} 
  Combining~\eqref{e:atmp3} and~\eqref{e:atmp4}, we get
  \begin{align}\label{e:atmp5}
    \frac{1}{n_1-1}\sum_{m=1}^{n_1-1}\norm{P_N U^m  \theta_{n_0+1}}_1^2 \geq 2\lambda_N\norm{\theta_{n_0+1}}^2\,.
  \end{align}
  By~\eqref{energysub1} and~\eqref{e:epsilonH1Relation}, we have
  \begin{align*}
    \frac{1}{n_1-1}\sum_{m=1}^{n_1-1}\norm{\theta_{n_0+1+m}-\phi_{m+1}}^2 &\leq\frac{ n_1^2 \nu}{n_1-1} \sum_{m=1}^{n_1-1}\norm{U\theta_{n_0+1+m}}_1^2\\
    & \leq \frac{n_1^2\nu}{n_1-1} \sum_{m=1}^{n_1-1}\norm{\grad \varphi}_{L^\infty} ^{2m+2} \norm{\theta_{n_0+1}}_1^2\\
    & \leq \frac{n_1^2\nu\norm{\grad \varphi}_{L^\infty}^{2n_1+2}}{(n_1-1)(\norm{\grad \varphi}_{L^\infty}^2-1)}\norm{\theta_{n_0+1}}_1^2\\
    &\leq \frac{1}{4}\norm{\theta_{n_0+1}}^2\,,
  \end{align*}
  which implies 
  \begin{align}\label{e:atmp6}
    \frac{1}{n_1-1}\sum_{m=1}^{n_1-1}\norm{P_N(\theta_{n_0+1+m}-\phi_{m+1})}_1^2\leq \frac{\lambda_N}{4}\norm{\theta_{n_0+1}}^2\,.
  \end{align}
  Equation~\eqref{e:atmp5} together with ~\eqref{e:atmp6} gives 
  \begin{align}
    \sum_{m=1}^{n_1-1}\norm{\theta_{n_0+1+m}}_1^2\geq \sum_{m=1}^{n_1-1}\norm{P_N\theta_{n_0+1+m}}_1^2\geq \frac{\lambda_N}{2}(n_1-1)\norm{\theta_{n_0+1}}^2\,.
  \end{align}
  We now use~\eqref{e:epsilonH1Relation} again to get
  \begin{align*}
    \sum_{m=1}^{n_1-1}\Enu \theta_{n_0+m} \geq \lambda_N(n_1-1)\norm{\theta_{n_0+1}}^2\,,
  \end{align*}
  which, as before, yields
  \begin{align*}
    \norm{\theta_{n_0+n_1}}^2\leq e^{-\frac{\nu \lambda_N n_1}{2}}\norm{\theta_{n_0}}^2\,.
  \end{align*}
  This proves~\eqref{e:b4} as desired, finishing the proof.
  \end{proofcases}
\end{proof}

\bibliographystyle{halpha-abbrv}
\bibliography{refs,preprints}
%endsection
% DO NOT EDIT THIS LINE: $Id: 2b82ec53dbca1f9429b25b81d07c1c5dc6cd0de6 $
\end{document}